\patchcmd{\SetTagPlusEndMark}{$}{}{}{}
\patchcmd{\SetTagPlusEndMark}{$}{}{}{}
\newcommand{\R}{\mathbb{R}}
\newcommand{\mD}{\mathcal{D}}
\newcommand{\dd}{\, \text{d}}
\crefname{hypothesis}{Hypothesis}{Hypotheses}
\title{On the Turnpike Property and the Receding-Horizon Method for Linear-Quadratic Optimal Control Problems\thanks{Submitted to the editors on November 8, 2018.}}
\author{Tobias Breiten\thanks{Institute of Mathematics and Scientific Computing, University of Graz, Austria 
  (\email{tobias.breiten@uni-graz.at}).}
\and Laurent Pfeiffer\thanks{Inria and CMAP (UMR 7641), CNRS, Ecole  Polytechnique,  Institut  Polytechnique  de Paris, Route de Saclay, 91128 Palaiseau, France (\email{laurent.pfeiffer@uni-graz.at}).}
}
\begin{document}

\maketitle

\begin{abstract}
Optimal control problems with a very large time horizon can be tackled with the Receding Horizon Control (RHC) method, which consists in solving a sequence of optimal control problems with small prediction horizon. The main result of this article is the proof of the exponential convergence (with respect to the prediction horizon) of the control generated by the RHC method towards the exact solution of the problem. The result is established for a class of infinite-dimensional linear-quadratic optimal control problems with time-independent dynamics and integral cost. Such problems satisfy the turnpike property: the optimal trajectory remains most of the time very close to the solution to the associated static optimization problem.
Specific terminal cost functions, derived from the Lagrange multiplier associated with the static optimization problem, are employed in the implementation of the RHC method.
\end{abstract}

\begin{keywords}
Receding horizon control, model predictive control, value function, optimality systems, Riccati equation, turnpike property.
\end{keywords}

\begin{AMS}
49J20,  49L20, 49Q12, 93D15.
\end{AMS}

\section{Introduction}

\subsection{Context}

We consider in this article the following class of linear-quadratic optimal control problems:
\begin{equation*} \label{prob:turnpike} \tag{$P$}
\left\{
\begin{array}{l}
{\displaystyle
\inf_{\begin{subarray}{c} y \in W(0,\bar{T}) \\ u \in L^2(0,\bar{T};U) \end{subarray}} \! \!
J_{\bar{T},Q,q}(u,y):= \! \int_0^{\bar{T}} \! \ell(y(t),u(t)) \dd t + {\textstyle \frac{1}{2}} \langle y(\bar{T}), Q y(\bar{T}) \rangle + \langle q, y(\bar{T}) \rangle, }\\[0.5em]
\text{subject to: } \dot{y}(t)= Ay(t) + Bu(t) + f^\diamond, \quad y(0)= y_0,
\end{array}
\right.
\end{equation*}
where the integral cost $\ell$ is defined by
\begin{equation*}
\ell(y,u)= {\textstyle \frac{1}{2} } \| Cy \|_Z^2 + \langle g^\diamond, y \rangle_{V^*,V}
+ {\textstyle \frac{\alpha}{2}} \| u\|^2_U + \langle h^\diamond, u \rangle_U.
\end{equation*}
Here $V \subset Y \subset V^*$ is a Gelfand triple of real Hilbert spaces \cite[page 147]{Tro10}, where the embedding of $V$ into $Y$ is dense, $V^*$ denotes the topological dual of $V$ and $U,Z$ denote further Hilbert spaces. The operator $A\colon \mD(A) \subset Y \rightarrow Y$ is the infinitesimal generator of an analytic $C_0$-semigroup $e^{At}$ on $Y$, $B \in \mathcal{L}(U,V^*)$, $C \in \mathcal{L}(Y,Z)$, $\alpha> 0$, $Q \in \mathcal{L}(Y)$ is self-adjoint positive semi-definite and $\mD(A)$ denotes the domain of $A$. The pairs $(A,B)$ and $(A,C)$ are assumed to be stabilizable and detectable, respectively. The elements $y_0 \in Y$, $f^\diamond \in V^*$, $g^\diamond \in V^*$, $h^\diamond \in U$, $q \in Y$ are given.

The following problem, referred to as \emph{static} optimization problem (or \emph{steady-state} optimization problem), has a unique solution $(y^\diamond,u^\diamond)$ with unique associated Lagrange multiplier $p^\diamond$:
\begin{equation} \label{eq:turnpike_pb_setub}
\inf_{(y,u) \in V \times U} \ell(y,u),
\quad \text{subject to: } Ay + Bu + f^\diamond = 0.
\end{equation}
A particularly important feature of \eqref{prob:turnpike} is the \emph{exponential turnpike} property. It states that there exist two constants $M>0$ and $\lambda>0$, independent of $\bar{T}$, such that for all $t \in [0,\bar{T}]$, $\| \bar{y}(t) - y^\diamond \|_Y \leq M \big( e^{-\lambda t} + e^{-\lambda(\bar{T}-t)} \big)$, where $\bar{y}$ denotes the optimal trajectory.
The trajectory $\bar{y}$ is thus made of three arcs, the first and last one being transient short-time arcs and the middle one a long-time arc, where the trajectory remains close to $y^\diamond$.
We refer the reader to the books \cite{Z06,Z19}, where different turnpike properties are established for different kinds of systems. We mention in particular the general characterization of the turnpike phenomenon for linear systems in \cite[Section 5.34]{Z19}. For linear-quadratic problems, we mention the articles \cite{DGSW14,GG18} for discrete-time systems and the articles \cite{PZ13} and \cite{PZ16} containing results for classes of infinite-dimensional systems. We also mention the early reference \cite{AL85} dealing with a tracking problem. Exponential turnpike properties have been established for non-linear systems in \cite{TZ15} and \cite{TZZ18}.

The aim of this article is to analyze the efficiency of the Receding Horizon Control (RHC) method (also called Model Predictive Control method), that we briefly present here, a detailed description can be found in Section \ref{section:rhc}.
We consider an implementation of the method with three parameters: a sampling time $\tau$, a prediction horizon $T$, and a prescribed number of iterations $N$. The method generates in a recursive way a control $u_{RH}$ and its associated trajectory $y_{RH}$. At the beginning of iteration $n$ of the algorithm, $u_{RH}$ and $y_{RH}$ have already been computed on $(0,n\tau)$. Then, an optimal control problem is solved on the interval $(n\tau, n\tau + T)$, with initial condition $y_{RH}(n\tau)$, with the same integral cost as in \eqref{prob:turnpike}, but with the following terminal cost function:
\begin{equation} \label{eq:def_phi_setup}
\phi(y)= \langle p^\diamond, y \rangle_Y.
\end{equation}
The restriction of the solution to $(n\tau,(n+1)\tau)$ is then concatenated with $(y_{RH},u_{RH})$. At iteration $N$, a last optimal control problem is solved on the interval $(N \tau,\bar{T})$.
The definition \eqref{eq:def_phi_setup} is actually a particular choice of the terminal cost among a general class of linear-quadratic functions.
For this specific definition, the main result of the article is the following estimate:
\begin{align}
& \max \big( \| y_{RH}- \bar{y} \|_{W(0,\bar{T})}, \| u_{RH} - \bar{u} \|_{L^2(0,\bar{T};U)} \big) \notag \\
& \qquad \qquad \leq M e^{-\lambda(T-\tau)} \big( e^{-\lambda T} \| y_0 - y^\diamond \|_Y + e^{-\lambda(\bar{T}- (N \tau + T))}  \| \tilde{q} \|_Y \big), \label{eq:main_setup}
\end{align}
with $\tilde{q}= q - p^\diamond + Qy^\diamond$. The estimate is proven for sampling times and prediction horizons satisfying $0 < \tau_0 \leq \tau \leq T \leq \bar{T}$. 
The constants $\tau_0>0$, $M>0$, and $\lambda >0$ are independent of $y_0$, $f^\diamond$, $g^\diamond$, $h^\diamond$, $q$, $N$, $\tau$, $T$, and $\bar{T}$.
Let us mention that the lower bound $\tau_0$ cannot be chosen arbitrarily small.
The idea of taking $\langle p^\diamond, y \rangle$ as a terminal cost has been proposed in the recent article \cite{ZF18} in the context of discrete-time problems. 

The choice of an appropriate terminal cost function is a key issue in the design of an appropriate RHC scheme. When $\phi$ is the exact value function, then the RHC method generates the exact solution to the problem, as a consequence of the dynamic programming principle. The article will give a (positive) answer to the following question: Does the RHC algorithm generate an efficient control if a good approximation of the value function is used as terminal cost function?
The construction of such an approximation is here possible thanks to the turnpike property.
We will see that the derivative of the value function (with respect to the initial condition), evaluated at $y^\diamond$, converges to $p^\diamond$ as $\bar{T} - t$ increases. Roughly speaking, the definition \eqref{eq:def_phi_setup} is a kind of first-order Taylor approximation of the value function, around $y^\diamond$.

The RHC method is receiving a tremendous amount of attention and it is frequently used in control engineering, in particular because it is computationally easier to solve a problem with short horizon. Another reason is that the method can be used as a feedback mechanism: when the control is computed in real time with the RHC method, perturbations having arisen in the past can be taken into account.
Let us point at some references from the large literature on receding horizon control. For finite-dimensional systems, we mention \cite{GP, MRRS}, for infinite-dimensional systems, we mention \cite{AK2, AK1, G}, and for discrete-time systems the articles \cite{GMTT, GR}.

In the current framework, the first-order optimality conditions take the usual form of a linear optimality system. The central idea for the derivation of estimate \eqref{eq:main_setup} is to compare the right-hand sides of the two optimality systems associated with the exact solution of \eqref{prob:turnpike} (restricted to $(n\tau,n \tau + T)$) and with the solution to the optimal control problem with short prediction horizon $T$. This comparison is realized with the help of a priori bounds for linear optimality systems in specific weighted spaces.
The analysis of the optimality systems is an important part of the present article. The a priori bounds that we have obtained are of general interest. A classical technique (used in particular in \cite{PZ13,TZ15}), allowing to decouple the optimality systems, plays an important role.

The article is structured as follows. In Section \ref{section:lin_opt_sys}, we prove our error bound in weighted spaces for the optimality systems associated with \eqref{prob:turnpike}. Some additional properties on linear optimality systems are provided in Section \ref{section:add_res}.
We formulate then the class of linear-quadratic problems to be analyzed in Section \ref{section:lq_pb}. The turnpike property and some properties of the value function are then established. Section \ref{section:rhc} deals with the RHC method and contains our main result (Theorem \ref{theo:main}). An extension to infinite-horizon problems is realized in Section \ref{section:infinite_horizon}. Finally, we provide numerical results showing the tightness of our error estimate in Section \ref{section:numerics}.

\subsection{Vector spaces}

For $T \in (0,\infty)$, we make use of the vector space $W(0,T) = \big\{ y \in L^2(0,T;V) \,|\, \dot{y} \in L^2 (0,T;V^*) \big\}$.
As it is well-known, $W(0,T)$ is continuously embedded in $C([0,T],Y)$. We can therefore equip it with the following norm:
\begin{equation*}
\| y \|_{W(0,T)}
= \max \big( \| y \|_{L^2(0,T;V)}, \| \dot{y} \|_{L^2(0,T;V^*)}, \| y \|_{L^\infty(0,T;Y)} \big).
\end{equation*}

\paragraph{Weighted spaces}

Let $\mu \in \R$ be given, let $T \in (0,\infty)$. We denote by $L_{\mu}^2(0,T;U)$ the space of measurable functions $u \colon (0,T) \rightarrow U$ such that
\begin{equation*}
\| u \|_{L_{\mu}^2(0,T;U)} := \| e^{\mu \cdot} u(\cdot) \|_{L^2(0,T;U)} = \Big( \int_0^T \| e^{\mu t} u(t) \|_U^2 \dd t \Big)^{1/2} < \infty.
\end{equation*}
Observing that the mapping
$u \in L_{\mu}^2(0,T;U) \mapsto e^{\mu \cdot} u \in L^2(0,T;U)$
is an isometry, we deduce that $L_{\mu}^2(0,T;U)$ is a Banach space. Since $e^{\mu \cdot}$ is bounded from above and from below by a positive constant, we have that for all measurable $u \colon (0,T) \rightarrow U$, $u \in L^2(0,T;U)$ if and only if $u \in L^2_\mu(0,T;U)$. The spaces $L^2(0,T;U)$ and $L^2_\mu(0,T;U)$ are therefore the same vector space, equipped with two different norms. We define in a similar way the space $L_\mu^2(0,T;X)$, for a given Hilbert space $X$.
Similarly, we define the space $L_\mu^\infty(0,T;Y)$ of measurable mappings from $y \colon (0,T) \rightarrow Y$ such that
\begin{equation*}
\| y \|_{L_\mu^\infty(0,T;Y)} := \| e^{\mu \cdot} y(\cdot) \|_{L^\infty(0,T;Y)} < \infty.
\end{equation*}
We finally define the Banach space $W_\mu(0,T)$ as the space of measurable mappings $y \colon (0,T) \rightarrow V$ such that $e^{\mu \cdot} y \in W(0,T)$. One can check that for all measurable mappings $y \colon (0,T) \rightarrow V$, $y \in W(0,T)$ if and only if $y \in W_\mu(0,T)$.

For $T \in (0,\infty)$ and $\mu \in \R$, we introduce the space
\begin{equation} \label{eq:spaceLambda}
\Lambda_{T,\mu}= W_{\mu}(0,T) \times L_{\mu}^2(0,T;U) \times W_{\mu}(0,T),
\end{equation}
equipped with the norm $\| (y,u,p) \|_{\Lambda_{T,\mu}} = \max \big( \| y \|_{W_\mu(0,T)}, \| u \|_{L_\mu^2(0,T;U)}, \| p \|_{W_\mu(0,T)} \big)$.
For $T \in (0,\infty)$, we define the space
\begin{equation} \label{eq:spaceOmega}
\Upsilon_{T,\mu}= Y \times L_{\mu}^2(0,T;V^*) \times L_{\mu}^2(0,T;V^*) \times L_{\mu}^2(0,T;U) \times Y
\end{equation}
that we equip with the norm
\begin{equation*}
\| (y_0,f,g,h,q) \|_{\Upsilon_{T,\mu}}
= \max \big( \| y_0 \|_Y, \| (f,g,h) \|_{L_{\mu}^2(0,T;V^*\times V^* \times U)}, e^{\mu T} \| q \|_{Y} \big).
\end{equation*}
Let us emphasize the fact that the component $q$ appears with a weight $e^{\mu T}$ in the above norm. The spaces $\Lambda_{T,0}$ and $\Lambda_{T,\mu}$ (resp.\@ $\Upsilon_{T,0}$ and $\Upsilon_{T,\mu}$) are the same vector space, equipped with two different norms. In the following lemma, the equivalence between these two norms is quantified.

\begin{lemma} \label{lemma:embedding}
For all $\mu_0$ and $\mu_1$ with $\mu_0 \leq \mu_1$, there exists a constant $M>0$ such that for all $T$, for all $(y,u,p) \in \Lambda_{T,0}$,
\begin{align*}
\| (y,u,p) \|_{\Lambda_{T,\mu_0}} \leq \ & M \| (y,u,p) \|_{\Lambda_{T,\mu_1}}, \\
\| (y,u,p) \|_{\Lambda_{T,\mu_1}} \leq \ & M e^{(\mu_1-\mu_0)T} \| (y,u,p) \|_{\Lambda_{T,\mu_0}},
\end{align*}
and such that, similarly, for all $(y_0,f,g,h,q) \in \Upsilon_{T,0}$,
\begin{align*}
\| (y_0,f,g,h,q) \|_{\Upsilon_{T,\mu_0}} \leq \ & M \| (y_0,f,g,h,q) \|_{\Upsilon_{T,\mu_1}}, \\
\| (y_0,f,g,h,q) \|_{\Upsilon_{T,\mu_1}} \leq \ & M e^{(\mu_1-\mu_0)T} \| (y_0,f,g,h,q) \|_{\Upsilon_{T,\mu_0}}.
\end{align*}
\end{lemma}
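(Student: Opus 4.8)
The plan is to reduce every norm comparison to a one-dimensional statement about the multiplicative weight $e^{\nu \cdot}$ with $\nu := \mu_1 - \mu_0 \geq 0$, and then to treat the three kinds of entries separately: the $L_\mu^2$ factors, the terminal datum $q$, and the $W_\mu$ factors. The starting point is the elementary bracketing $1 \leq e^{\nu t} \leq e^{\nu T}$, which holds for every $t \in [0,T]$ precisely because $\nu \geq 0$.

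First I would dispose of the $L_\mu^2$ entries (the components $f,g,h$ of $\Upsilon$ and the control $u$ of $\Lambda$). For any vector-valued $v$, writing $\| v \|_{L_{\mu_0}^2}^2 = \int_0^T e^{-2\nu t} e^{2\mu_1 t} \| v(t) \|^2 \dd t$ and $\| v \|_{L_{\mu_1}^2}^2 = \int_0^T e^{2\nu t} e^{2\mu_0 t} \| v(t) \|^2 \dd t$, the bracketing gives at once $\| v \|_{L_{\mu_0}^2} \leq \| v \|_{L_{\mu_1}^2} \leq e^{\nu T} \| v \|_{L_{\mu_0}^2}$, i.e.\@ both inequalities with constant $1$. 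The terminal term is handled by the very same bracketing evaluated at the single point $t = T$: since $e^{\mu_1 T} \| q \|_Y = e^{\nu T} e^{\mu_0 T} \| q \|_Y$ and $e^{\mu_0 T} \leq e^{\mu_1 T}$, the weighted $q$-term obeys exactly the two desired inequalities — this is precisely why the weight $e^{\mu T}$ is attached to $q$ in the definition of $\Upsilon_{T,\mu}$. The $y_0$ entry carries no weight and is identical in both norms. Taking the maximum over the five entries yields the two estimates for $\Upsilon$ with $M = 1$.

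For $\Lambda$ the only nontrivial point is the $W_\mu$ factors, and this is where I expect the sole genuine difficulty. Setting $z := e^{\mu_0 \cdot} y$, so that $e^{\mu_1 \cdot} y = e^{\nu \cdot} z$, the task is to compare $\| e^{\nu \cdot} z \|_{W(0,T)}$ with $\| z \|_{W(0,T)}$. The $L^2(0,T;V)$ and $L^\infty(0,T;Y)$ parts of the $W$-norm are controlled exactly as above by the bracketing. The derivative part is the obstacle: the product rule $(e^{\nu \cdot} z)' = \nu e^{\nu \cdot} z + e^{\nu \cdot} \dot z$ produces an extra term proportional to $z$ itself (not $\dot z$), and it must be measured in $V^*$. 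Here I would invoke the continuous embedding $V \hookrightarrow V^*$ of the Gelfand triple to bound $\| z \|_{L^2(0,T;V^*)} \leq c \| z \|_{L^2(0,T;V)} \leq c \| z \|_{W(0,T)}$; combined with $\| e^{\nu \cdot} \dot z \|_{L^2(V^*)} \leq e^{\nu T} \| \dot z \|_{L^2(V^*)}$ this gives $\| (e^{\nu \cdot} z)' \|_{L^2(V^*)} \leq (1 + \nu c) e^{\nu T} \| z \|_{W(0,T)}$, hence $\| e^{\nu \cdot} z \|_W \leq (1 + \nu c) e^{\nu T} \| z \|_W$. The reverse direction is identical after writing $z = e^{-\nu \cdot}(e^{\nu \cdot} z)$ and using $e^{-\nu t} \leq 1$, which yields $\| z \|_W \leq (1 + \nu c) \| e^{\nu \cdot} z \|_W$.

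Collecting the bounds and setting $M := 1 + (\mu_1 - \mu_0) c$, with $c$ the embedding constant of $V \hookrightarrow V^*$, the $W_\mu$ comparison reads $\| y \|_{W_{\mu_1}} \leq M e^{\nu T} \| y \|_{W_{\mu_0}}$ and $\| y \|_{W_{\mu_0}} \leq M \| y \|_{W_{\mu_1}}$. Since this $M$ dominates the constant-$1$ factors coming from the control entry, taking the maximum over the three entries of $\Lambda$ delivers both claimed inequalities. Crucially, $c$ and $\nu$ depend only on $\mu_0,\mu_1$ and on the fixed Gelfand triple, so $M$ is independent of $T$, exactly as required.
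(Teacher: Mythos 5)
Your proof is correct and follows essentially the same route as the paper: reduce to the individual components, handle the $L^2_\mu$ and $q$ entries by the elementary bracketing $1 \leq e^{(\mu_1-\mu_0)t} \leq e^{(\mu_1-\mu_0)T}$, and control the derivative part of the $W_\mu$-norm via the product rule together with the embedding $V \hookrightarrow V^*$ (which the paper uses implicitly in bounding $\| \dot{z}_0 \|_{L^2(0,T;V^*)}$ by $M \| z_0 \|_{L^2(0,T;V)} + \| \dot{z}_1 \|_{L^2(0,T;V^*)}$). Your treatment is, if anything, slightly more explicit about the $q$-term and the constant $M = 1 + (\mu_1-\mu_0)c$, but the argument is the same.
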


\begin{proof}[Proof of Lemma \ref{lemma:embedding}]
Let $y \in W(0,T)$ and $u \in L^2(0,T;U)$. For proving the lemma, it suffices to prove the existence of $M>0$, independent of $T$, $y$, and $u$, such that
\begin{equation} \label{lemma:embedding_u}
\| u \|_{L_{\mu_0}^2(0,T;U)} \leq M \| u \|_{L_{\mu_1}^2(0,T;U)}, \quad
\| u \|_{L_{\mu_1}^2(0,T;U)} \leq M e^{(\mu_1-\mu_0)T} \| u \|_{L_{\mu_0}^2(0,T;U)},
\end{equation}
and such that
\begin{equation} \label{lemma:embedding_y}
\| y \|_{W_{\mu_0}(0,T)} \leq M \| y \|_{W_{\mu_1}(0,T)}, \quad
\| y \|_{W_{\mu_1}(0,T)} \leq M e^{(\mu_1-\mu_0)T} \| y \|_{W_{\mu_0}(0,T)}.
\end{equation}
The inequalities \eqref{lemma:embedding_u} can be easily verified (with $M=1$). One can also easily verify that
\begin{align*}
\| y \|_{L_{\mu_0}^2(0,T;V)} \leq M \| y \|_{L_{\mu_1}^2(0,T;V)}, \quad
\| y \|_{L_{\mu_1}^2(0,T;V)} \leq M e^{(\mu_1-\mu_0)T} \| y \|_{L_{\mu_0}^2(0,T;V)} \\
\| y \|_{L_{\mu_0}^\infty(0,T;Y)} \leq M \| y \|_{L_{\mu_1}^\infty(0,T;Y)}, \quad
\| y \|_{L_{\mu_1}^\infty(0,T;Y)} \leq M e^{(\mu_1-\mu_0)T} \| y \|_{L_{\mu_0}^\infty(0,T;Y)}.
\end{align*}
Let $z_0(t)= e^{\mu_0 t} y(t)$ and $z_1(t)= e^{\mu_1 t} y(t)$. For proving \eqref{lemma:embedding_y}, it remains to compare $\| \dot{z}_0 \|_{L^2(0,T;V^*)}$ and $\| \dot{z}_1 \|_{L^2(0,T;V^*)}$. We have
$z_0(t)= e^{(\mu_0-\mu_1)t} z_1(t)$ and thus
$\dot{z}_0(t)= (\mu_0-\mu_1) z_0(t) + e^{(\mu_0-\mu_1)t} \dot{z}_1(t)$.
We deduce that
\begin{equation*}
\| \dot{z}_0 \|_{L^2(0,T;V^*)}
\leq M \| z_0 \|_{L^2(0,T;V)} + \| \dot{z}_1 \|_{L^2(0,T;V^*)}
\leq M \| z_1 \|_{W(0,T)} = M \| y \|_{W_{\mu_1}(0,T)}.
\end{equation*}
Similarly, we have $\dot{z}_1(t)= (\mu_1-\mu_0) z_1(t) + e^{(\mu_1-\mu_0)t} \dot{z}_0(t)$.
We deduce that
\begin{align*}
\| \dot{z}_1 \|_{L^2(0,T;V^*)}
\leq \ & M \| z_1 \|_{L^2(0,T;V)} + e^{(\mu_1-\mu_0)T} \| \dot{z}_0 \|_{L^2(0,T;V^*)} \\
\leq \ & M e^{(\mu_1-\mu_0)T} \| z_0 \|_{W(0,T)} \\
= \ & M e^{(\mu_1-\mu_0)T} \| y \|_{W_{\mu_0}(0,T)}.
\end{align*}
The inequalities \eqref{lemma:embedding_y} follow. This concludes the proof.
\end{proof}

\subsection{Assumptions}

Throughout the article we assume that the following four assumptions hold true.
\begin{itemize}
\item[(A1)] The operator $-A$ can be associated with a $V$-$Y$ coercive bilinear
form $a\colon V\times V \to \mathbb R$ which is such that there exist $\lambda_0 > 0$ and $\delta \in \R$ satisfying $a(v,v) \geq \lambda_0 \| v \| _V^2 - \delta \| v\|_Y^2$, for all $v \in V$.
\item[(A2)] [Stabilizability] There exists an operator $F \in \mathcal{L}(Y,U)$ such that the semigroup $e^{(A+BF) t}$ is exponentially stable on $Y$.
\item[(A3)] [Detectability] There exists an operator $K \in \mathcal{L}(Z,Y)$ such that the semigroup $e^{(A-KC) t}$ is exponentially stable on $Y$.
\end{itemize}
Assumptions (A2) and (A3) are well-known and analysed for infinite-dimensional systems, see e.g.\@ \cite{CurZ95}.
Consider the algebraic Riccati equation: for all $y_1$ and $y_2 \in \mathcal{D}(A)$,
\begin{equation} \label{eq:are}
\langle A^* \Pi y_1,y_2 \rangle_Y + \langle \Pi A y_1,y_2 \rangle_Y + \langle
Cy_1,Cy_2\rangle_Z - {\textstyle \frac{1}{\alpha} }  \langle B^* \Pi y_1,B^* \Pi y_2\rangle_U= 0.
\end{equation}
Due to the (exponential) stabilizability and detectability assumptions, it is well-known (see \cite[Theorem 6.2.7]{CurZ95} and \cite[Theorem 2.2.1]{LasT00}) that \eqref{eq:are} has a unique nonnegative self-adjoint solution $\Pi \in \mathcal{L}(Y,V) \cap \mathcal{L}(V^*,Y)$. 
Additionally, the semigroup generated by the operator $A_\pi:=A-\frac{1}{\alpha}BB^*\Pi$ is exponentially stable on $Y$.
We fix now, for the rest of the article, a real number $\lambda$ such that
\begin{align}\label{eq:spect_absc}
0 < \lambda < \bar{\lambda}:= -{\text{sup}}_{\mu \in \sigma(A_\pi)} \mathrm{Re}(\mu).
\end{align}
With (A1) holding the operator $A$ associated with the form $a$ generates an analytic semigroup that we denote by $e^{At}$, see e.g.\@ \cite[Sections 3.6 and 5.4]{Tan79}. Let us set $A_0=A-\lambda_0 I$. Then $-A_0$ has a bounded inverse in $Y$, see
\cite[page 75]{Tan79}, and in particular it is maximal accretive, see
\cite{Tan79}. We have $\mD(A_0)= \mD(A)$ and the fractional powers of
$-A_0$ are well-defined.
In particular,
$\mD((-A_0)^{\frac{1}{2}})=[\mD(-A_0),Y]_{\frac{1}{2}}:=(\mD(-A_0),Y)_
{\frac{1}{2},2}$ the real interpolation space with indices 2 and $\frac{1}{2}$,
see \cite[Proposition 6.1, Part II, Chapter 1]{Benetal07}.
Assumption (A4) below will only be used in the proof Lemma \ref{lemma:existence_turnpike}, where the existence and uniqueness of a solution $(y^\diamond,u^\diamond)$ to the static problem is established. It is not necessary for the analysis of optimality systems done in Sections \ref{section:lin_opt_sys} and \ref{section:add_res}.

\begin{itemize}
\item[(A4)] It holds that $[\mathcal{D}(-A_0),Y]_{\frac{1}{2}} = [\mathcal{D}(-A_0^*),Y]_{\frac{1}{2}\phantom{,}} = V$.
\end{itemize}

\section{Linear optimality systems} \label{section:lin_opt_sys}

The section is dedicated to the analysis of the following optimality system:
\begin{equation} \label{eq:non_reg_os}
\begin{cases}
\begin{array}{rll}
y(0) = & \! \! \! y_0 \qquad & \text{in $Y$} \\
\dot{y}-(Ay + Bu) = & \! \! \! f \qquad & \text{in $L_\mu^2(0,T;V^*)$} \\
-\dot{p} - A^* p - C^* C y = & \! \! \! g & \text{in $L_\mu^2(0,T;V^*)$} \\
\alpha u + B^*p  = & \! \! \! - h & \text{in $L_\mu^2(0,T;U)$} \\
p(T) - Q y(T)= & \! \! \! q & \text{in $Y$},
\end{array}
\end{cases}
\end{equation}
where $\mu \in \{ -\lambda,0,\lambda \}$, $T>0$, $Q \in \mathcal{L}(Y)$ is self-adjoint and positive semi-definite, and $(y_0,f,g,h,q) \in \Upsilon_{T,\mu}$. Given two times $t_1 < t_2$, we introduce the operator $\mathcal{H} \colon W(t_1,t_2) \times L^2(t_1,t_2;U) \times W(t_1,t_2) \rightarrow L^2(t_1,t_2;V^*\times V^* \times U)$, defined by
\begin{equation*}
\mathcal{H} (y,u,p)
= \big(\dot{y}-(Ay + Bu),\
-\dot{p} - A^* p - C^* C y,\
\alpha u + B^*p \big).
\end{equation*}
The dependence of $\mathcal{H}$ with respect to $t_1$ and $t_2$ is not indicated and the underlying values of $t_1$ and $t_2$ are always clear from the context. The operator $\mathcal{H}$ enables us to write the three intermediate equations of \eqref{eq:non_reg_os} in the compact form $\mathcal{H}(y,u,p)= (f,g,-h)$.

The main result of the section is the following theorem, which is proved in subsection \ref{subsection:general}.

\begin{theorem} \label{theo:non_reg_os}
Let $\mathcal{Q} \subset \mathcal{L}(Y)$ be a bounded set of self-adjoint and positive semi-definite operators. For all $T>0$, for all $Q \in \mathcal{Q}$, for all $(y_0,f,g,h,q) \in \Upsilon_{T,0}$, there exists a unique solution $(y,u,p)$ to system \eqref{eq:non_reg_os}. Moreover, for all $\mu \in \{ - \lambda, 0, \lambda \}$, there exists a constant $M$ independent of $T$, $Q$, and $(y_0,f,g,h,q)$ such that
\begin{equation} \label{eq:estimate_non_reg_os}
\| (y,u,p) \|_{\Lambda_{T,\mu}}
\leq M \| (y_0,f,g,h,q) \|_{\Upsilon_{T,\mu}}.
\end{equation}
\end{theorem}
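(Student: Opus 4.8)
The plan is to treat the two assertions separately: first existence and uniqueness, then the quantitative bound, the latter being the genuinely delicate part because the constant $M$ must be independent of $T$. For existence and uniqueness I would work first with $\mu=0$. System \eqref{eq:non_reg_os} is precisely the first-order optimality system of the auxiliary control problem of minimizing
\[
J(u)=\int_0^T \big( {\textstyle\frac12}\|Cy\|_Z^2 + \langle g,y\rangle + {\textstyle\frac{\alpha}{2}}\|u\|_U^2 + \langle h,u\rangle \big)\dd t + {\textstyle\frac12}\langle y(T),Qy(T)\rangle + \langle q,y(T)\rangle
\]
over $u\in L^2(0,T;U)$, subject to $\dot y = Ay+Bu+f$, $y(0)=y_0$, with adjoint state $p$. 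Since $\alpha>0$ and $Q$ is positive semi-definite, $J$ is strongly convex and coercive in $u$ (the affine continuous dependence $u\mapsto y$ being standard in $W(0,T)$), so it admits a unique minimizer whose optimality conditions furnish a solution of \eqref{eq:non_reg_os}. Uniqueness will then follow a posteriori from the a priori estimate and linearity: the difference of two solutions solves the system with zero data, hence has zero norm. Because $\Lambda_{T,0}$ and $\Lambda_{T,\mu}$ (resp.\ $\Upsilon_{T,0}$ and $\Upsilon_{T,\mu}$) coincide as vector spaces, both existence and uniqueness transfer verbatim to arbitrary $\mu\in\{-\lambda,0,\lambda\}$.

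For the estimate I would use the decoupling based on the Riccati operator $\Pi$ of \eqref{eq:are}. Setting $p=\Pi y+\pi$ and eliminating $u$ via $\alpha u + B^*p=-h$, the algebraic identity \eqref{eq:are} makes all $y$-terms in the adjoint equation cancel, and the system splits into a forward equation $\dot y = A_\pi y - \frac1\alpha BB^*\pi + (f-\frac1\alpha Bh)$, $y(0)=y_0$, governed by $A_\pi=A-\frac1\alpha BB^*\Pi$, and a backward equation $-\dot\pi = A_\pi^*\pi + (g+\Pi f-\frac1\alpha\Pi Bh)$ with terminal condition $\pi(T)=(Q-\Pi)y(T)+q$. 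The essential gain is that both $e^{A_\pi t}$ and $e^{A_\pi^* t}$ are exponentially stable with rate $\lambda$ by \eqref{eq:spect_absc}; the only remaining coupling is the cross source $-\frac1\alpha BB^*\pi$ in the $y$-equation together with the terminal datum $(Q-\Pi)y(T)$ in the $\pi$-equation.

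The core of the argument is then to insert the variation-of-constants formulas $y(t)=e^{A_\pi t}y_0+\int_0^t e^{A_\pi(t-s)}(\cdots)\dd s$ and $\pi(t)=e^{A_\pi^*(T-t)}\pi(T)+\int_t^T e^{A_\pi^*(s-t)}(\cdots)\dd s$ into the weighted norms of $\Lambda_{T,\mu}$ and to estimate them by Young's convolution inequality, using the smoothing $L_\mu^2(0,T;V^*)\to L_\mu^2(0,T;V)$ afforded by the analytic semigroup of (A1). Here the weight $e^{\mu T}$ carried by $q$ in $\|\cdot\|_{\Upsilon_{T,\mu}}$ is exactly what compensates the factor produced when the terminal datum is transported backward through the weight $e^{\mu\cdot}$, so that the contribution of $q$ is controlled uniformly in $T$. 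The residual $y$–$\pi$ coupling is closed by a small-gain argument: the feedback loop $y(T)\to\pi\to y\to y(T)$ accumulates a doubly exponential factor and is thus a contraction with $T$-independent norm; combined with the coercivity from $\alpha>0$, entering through the energy identity for $\frac{d}{dt}\langle y,p\rangle$, this yields the bound on $(y,u)$ and on $\pi$, hence on $p$. Finally the $\dot y$ and $\dot p$ components of the $W_\mu$-norms are recovered directly from the two evolution equations.

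I expect the main obstacle to be $T$-independence in the borderline case $\mu=\lambda$, where the weight exactly matches the decay rate of the semigroups. There a naive Gronwall or $L^1$-kernel estimate on the forward equation produces a spurious factor $T$ (the weighted kernel $e^{\lambda\sigma}e^{A_\pi\sigma}$ is merely bounded, not uniformly integrable on $(0,T)$), so that the forward and backward equations cannot be bounded in isolation; one must genuinely exploit the coupled decoupled structure and the matching $e^{\mu T}$-weight on $q$ to uncover the cancellation. Transferring between the three values of $\mu$ via Lemma \ref{lemma:embedding} is of no help, since that lemma loses precisely a factor $e^{(\mu_1-\mu_0)T}$; the weighted estimate \eqref{eq:estimate_non_reg_os} must therefore be established directly and separately for each $\mu\in\{-\lambda,0,\lambda\}$.
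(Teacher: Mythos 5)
Your setup is sound and coincides with the paper's first two building blocks: existence and uniqueness via the associated LQ problem (this is Lemma \ref{lemma:lq_pb}), and the Riccati decoupling $p=\Pi y+\pi$, which is exactly the mechanism of Lemma \ref{lemma:linSysFiniteHorizon}. You also correctly identify that for $Q\neq\Pi$ the decoupling is incomplete, the residual coupling being the loop $y(T)\mapsto\pi(T)=(Q-\Pi)y(T)+q\mapsto\pi\mapsto y\mapsto y(T)$, and that $\mu=\lambda$ is the delicate borderline case. The gap is in how you close this loop. The loop map is
$y_T\mapsto -\frac{1}{\alpha}\big(\int_0^T e^{A_\pi\sigma}BB^*e^{A_\pi^*\sigma}\,\mathrm{d}\sigma\big)(Q-\Pi)\,y_T$,
and the ``doubly exponential factor'' $e^{-2\lambda\sigma}$ in the integrand only makes this Gramian-type integral bounded \emph{uniformly in $T$} --- it does not make it small, because the contribution near $\sigma=0$ is $O(1)$. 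Hence the Lipschitz constant of the loop is of the form $\gamma\,\|Q-\Pi\|_{\mathcal{L}(Y)}/\alpha$ with a fixed constant $\gamma$; since $\mathcal{Q}$ is merely a \emph{bounded} set of positive semi-definite operators (it may contain $Q=0$, for instance, in which case $\|Q-\Pi\|=\|\Pi\|$), nothing makes this quantity smaller than $1$. Redoing the computation in the $\lambda$-weighted norms gives the same conclusion, since in the borderline case the weight exactly cancels the semigroup decay. So the small-gain step fails, and with it your entire estimate; the appeal to an energy identity for $\frac{d}{dt}\langle y,p\rangle$ is too vague to repair it, because what must be proved is invertibility of $I-L$ (with $L$ the loop operator) with inverse bounded uniformly in $T$ and in $Q\in\mathcal{Q}$, and that is not a smallness statement.

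The paper supplies the missing uniformity by two different mechanisms, neither of which appears in your proposal. For $\mu\in\{0,-\lambda\}$ no fixed point is used at all: the variational estimate of Lemma \ref{lemma:lq_pb} (which rests on stabilizability (A2) and detectability (A3), not on the decoupling) bounds $\|y(T)\|_Y$ directly with a constant independent of $T$ and $Q$, and one then feeds $\tilde q=(Q-\Pi)y(T)+q$ into the decoupled estimate of Lemma \ref{lemma:linSysFiniteHorizon}. For $\mu=\lambda$ the paper splits the interval at $T-\theta$ and works in the mixed weighted space $L^2_{\lambda,-\lambda}$ (positive weight on $(0,T-\theta)$, negative weight on $(T-\theta,T)$, Lemma \ref{lemma:linSysFiniteHorizon_mixte}): the map $\chi_1\colon y_T\mapsto y(T-\theta)$ then has Lipschitz constant $M_2M_3e^{-\lambda\theta}$, where the factor $e^{-\lambda\theta}=e^{\rho(T)-\rho(T-\theta)}$ comes from the weight drop over the distance $\theta$ --- not from any smallness of $\|Q-\Pi\|$ --- and composing with the short-horizon map $\chi_2$ yields a genuine contraction once the free parameter $\theta$ is fixed large enough to beat $M_1M_2M_3$. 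This tunable parameter is precisely what your argument lacks. (A minor point: your claim that transferring via Lemma \ref{lemma:embedding} is ``of no help'' is too strong; it is indeed useless for large $T$, but it is exactly what the paper uses to settle the remaining short-horizon case $T<\theta$.)
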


\begin{remark}
The result of the theorem, for $\mu=0$, is rather classical in the literature and can be established by analyzing the associated optimal control problem (see Lemma \ref{lemma:lq_pb}). The main novelty of our result is the estimate \eqref{eq:estimate_non_reg_os} in weighted spaces, with a constant $M$ which is independent of $T$. Let us mention that a similar result has been obtained in \cite[Theorem 3.1]{GSS19}, for negative weights. The proof is based on a Neumann-series argument.
Let us mention that the range of admissible weights in that reference is different from ours (compare in particular with \cite[Corollary 3.16]{GSS19}).
\end{remark}

\subsection{Decouplable optimality systems}

We prove in this subsection Theorem \ref{theo:non_reg_os} in the case where $Q= \Pi$ (Lemma \ref{lemma:linSysFiniteHorizon}).
We begin with a useful result on forward and backward linear systems with a right-hand side in $L_{\mu}^2(0,T;V^*)$ (Lemma \ref{lemma:exp_stab_forward_sys}).

\begin{lemma} \label{lemma_stab_a_pi}
For all $\mu \leq \lambda$, $A_\pi + \mu I$ generates an exponentially stable semigroup.
For all $\mu \geq -\lambda$, $A_\pi^* - \mu I$ generates an exponentially stable semigroup.
\end{lemma}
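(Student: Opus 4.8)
The statement concerns the operator $A_\pi = A - \frac{1}{\alpha}BB^*\Pi$, whose semigroup is exponentially stable on $Y$ with $\lambda = -\sup_{\mu \in \sigma(A_\pi)} \mathrm{Re}(\mu)$ by the definition in \eqref{eq:spect_absc}. I want to show that shifting the generator by $\mu I$ (for the forward case) keeps the semigroup \emph{bounded}, not merely stable, as long as $\mu \le \lambda$. The natural approach is spectral: first establish exponential stability with rate $\lambda$ via a quantitative bound $\| e^{A_\pi t} \| \le M e^{-\lambda_0 t}$ for every $\lambda_0 < \lambda$, and then multiply by $e^{\mu t}$ to control the shifted semigroup $e^{(A_\pi + \mu I)t} = e^{\mu t} e^{A_\pi t}$.

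\textbf{Key steps.} First, I recall that $A_\pi$ generates a $C_0$-semigroup, since $A_\pi$ is a bounded perturbation (by $-\frac{1}{\alpha}BB^*\Pi \in \mathcal{L}(Y)$, using $B \in \mathcal{L}(U,V^*)$, $B^* \in \mathcal{L}(V,U)$ and $\Pi \in \mathcal{L}(Y,V)$) of the analytic-semigroup generator $A$. The shift $A_\pi + \mu I$ likewise generates the $C_0$-semigroup $e^{\mu t} e^{A_\pi t}$. Second, the delicate point: the definition \eqref{eq:spect_absc} gives the spectral abscissa, but for a general $C_0$-semigroup the growth bound can exceed the spectral bound, so I cannot pass directly from $\sup \mathrm{Re}(\sigma(A_\pi)) = -\lambda$ to a decay rate $e^{-\lambda t}$. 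I would resolve this by invoking the exponential stability of $e^{A_\pi t}$ that was already asserted in the text (which yields $\|e^{A_\pi t}\| \le M e^{-\omega t}$ for some growth rate $\omega > 0$). For $\mu \le \omega$ this immediately gives boundedness of $e^{\mu t} e^{A_\pi t}$; to reach the boundary case $\mu = \lambda$ I would appeal to analyticity of the semigroup, which for analytic semigroups forces the growth bound to coincide with the spectral bound $-\lambda$, so that for each $\lambda_0 \in (0,\lambda)$ one has $\|e^{A_\pi t}\| \le M_{\lambda_0} e^{-\lambda_0 t}$. Third, I treat the adjoint case symmetrically: since $\sigma(A_\pi^*) = \overline{\sigma(A_\pi)}$, the same spectral abscissa applies, $e^{A_\pi^* t}$ is analytic and exponentially stable with the same rate $\lambda$, and $e^{(A_\pi^* - \mu I)t} = e^{-\mu t} e^{A_\pi^* t}$ is bounded whenever $-\mu \le \lambda$, i.e.\ $\mu \ge -\lambda$.

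\textbf{Main obstacle.} The genuinely delicate step is the boundary case $\mu = \lambda$ (respectively $\mu = -\lambda$), where the shift exactly cancels the decay rate. Here one needs the growth bound of the semigroup to equal the spectral bound, which is precisely the property guaranteed by analyticity (the spectrum-determined growth condition holds for analytic semigroups). Since $A$ generates an analytic semigroup by (A1) and $A_\pi$ differs from $A$ by a bounded operator, $A_\pi$ also generates an analytic semigroup, and the same holds for $A_\pi^*$. I would therefore make this analyticity argument explicit: for analytic semigroups the estimate $\|e^{A_\pi t}\| \le M e^{-\lambda t}$ holds with the actual spectral rate $\lambda$, so that $\|e^{(A_\pi+\mu I)t}\| \le M e^{(\mu - \lambda)t} \le M$ for all $\mu \le \lambda$ and all $t \ge 0$, giving boundedness. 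The adjoint statement follows identically.
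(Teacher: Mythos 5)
Your proposal is correct and follows essentially the same route as the paper: both rest on the spectrum-determined growth condition for analytic semigroups (the paper cites Triggiani) to convert the spectral abscissa $-\lambda$ from \eqref{eq:spect_absc} into the decay estimate $\| e^{A_\pi t} \|_{\mathcal{L}(Y)} \leq M e^{-\lambda t}$, after which boundedness of $e^{(A_\pi+\mu I)t} = e^{\mu t}e^{A_\pi t}$ is immediate for $\mu \leq \lambda$. The only cosmetic difference is in the adjoint case: you rerun the spectral argument for $A_\pi^*$ via $\sigma(A_\pi^*) = \overline{\sigma(A_\pi)}$ and analyticity of the adjoint semigroup, whereas the paper simply uses $(e^{(A_\pi + \mu I)t})^* = e^{(A_\pi+\mu I)^* t}$ together with equality of the norms of an operator and its adjoint; both are valid.
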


\begin{proof}
Let $\tilde{\lambda} \in (\lambda,\bar{\lambda})$.
Since the semigroup $e^{A_\pi t}$ is analytic, the \textit{spectrum determined growth condition} is satisfied, see e.g.\@ \cite{Tri75}. Hence, $\| e^{A_\pi t} \|_{\mathcal{L}(Y)} \le M e^{-\tilde{\lambda} t}$, where $M$ does not depend on $t$. Therefore,
$\| e^{(A_\pi + \mu I)t} \|_{\mathcal{L}(Y)} \leq M e^{(-\tilde{\lambda} + \mu)t}$, which proves the exponential stability of $A_\pi + \mu I$ since $-\tilde{\lambda} + \mu < -\lambda + \mu \leq 0$. Moreover, $(e^{(A_\pi + \mu I) t })^* = e^{(A_\pi + \mu I)^*t}$ (see \cite[page 41]{Paz83}), thus the operator $A_\pi^* - \mu I$ generates a exponentially stable semigroup as well, for $\mu \ge -\lambda$.
\end{proof}

\begin{lemma} \label{lemma:exp_stab_forward_sys}
For all $\mu \leq \lambda$, for all $T \in (0,\infty)$, for all $y_0 \in Y$, for all $f \in L_{\mu}^2(0,T;V^*)$, the following system:
\begin{equation} \label{eq_forward_sys}
\dot{y}= A_\pi y + f, \quad y(0)= y_0
\end{equation}
has a unique solution in $W_\mu(0,T)$. Moreover, there exists a constant $M>0$ independent of $T$, $y_0$, and $f$ such that $\| y \|_{W_\mu(0,T)} \leq M \big( \| y_0 \|_Y + \| f \|_{L_{\mu}^2(0,T;V^*)} \big)$.

For all $\mu \geq - \lambda$, for all $T \in (0,\infty)$, for all $q \in Y$, for all $\Phi \in L_{\mu}^2(0,T;V^*)$, the following system: $-\dot{r}= A_\pi^* r + \Phi$, $r(T)= q$ has a unique solution in $W_\mu(0,T)$. Moreover, there exists a constant $M>0$ independent of $T$, $q$, and $\Phi$ such that $\| r \|_{W_\mu(0,T)} \leq M \big( \| \Phi \|_{L_{\mu}^2(0,T;V^*)} + e^{\mu T} \| q \|_Y \big)$.
\end{lemma}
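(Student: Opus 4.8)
The plan is to handle both systems by the same device: reduce the weighted problem to an unweighted one governed by a \emph{bounded} analytic semigroup, reduce the whole $W(0,T)$-estimate to a single $L^2(0,T;V)$-estimate, and then obtain the backward system from the forward one by time reversal. For the forward system \eqref{eq_forward_sys} I first set $z(t)=e^{\mu t}y(t)$, so that $y\in W_\mu(0,T)$ if and only if $z\in W(0,T)$ with $\|z\|_{W(0,T)}=\|y\|_{W_\mu(0,T)}$. Differentiating gives $\dot z=(A_\pi+\mu I)z+e^{\mu\cdot}f$, $z(0)=y_0$, where $\|e^{\mu\cdot}f\|_{L^2(0,T;V^*)}=\|f\|_{L^2_\mu(0,T;V^*)}$. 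Since $\mu\le\lambda$, Lemma~\ref{lemma_stab_a_pi} guarantees that $A_\pi+\mu I$ generates a \emph{bounded} semigroup, which is still analytic (it differs from $A$ by a bounded operator and a shift) and, by (A1), is associated with a $V$-$Y$ coercive form. It therefore suffices to establish existence, uniqueness, and a $T$-uniform bound $\|z\|_{W(0,T)}\le M\big(\|y_0\|_Y+\|\tilde f\|_{L^2(0,T;V^*)}\big)$ for $\dot z=(A_\pi+\mu I)z+\tilde f$, $z(0)=y_0$.

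Existence and uniqueness of $z\in W(0,T)$ for $y_0\in Y$ and $\tilde f\in L^2(0,T;V^*)$ is the classical Lions well-posedness result for parabolic equations driven by a form that is coercive modulo a lower-order term; uniqueness follows from the standard energy identity, where the $T$-dependence of the constant is irrelevant. For the a priori bound, the key observation is that two of the three components of the $W(0,T)$-norm are controlled by the third with $T$-independent constants. Indeed, from the equation $\|\dot z\|_{L^2(0,T;V^*)}\le \|A_\pi+\mu I\|_{\mathcal{L}(V,V^*)}\|z\|_{L^2(0,T;V)}+\|\tilde f\|_{L^2(0,T;V^*)}$, and from the identity $\|z(t)\|_Y^2=\|y_0\|_Y^2+2\int_0^t\langle\dot z,z\rangle_{V^*,V}$ together with $W(0,T)\hookrightarrow C([0,T];Y)$ one obtains $\|z\|_{L^\infty(0,T;Y)}^2\le \|y_0\|_Y^2+2\|\dot z\|_{L^2(0,T;V^*)}\|z\|_{L^2(0,T;V)}$, again uniformly in $T$. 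Hence everything reduces to the single estimate $\|z\|_{L^2(0,T;V)}\le M\big(\|y_0\|_Y+\|\tilde f\|_{L^2(0,T;V^*)}\big)$.

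This $L^2(0,T;V)$-estimate is the heart of the matter and the step I expect to be the main obstacle, since a naive energy estimate only produces a constant growing with $T$. I would use the mild representation $z(t)=e^{(A_\pi+\mu I)t}y_0+\int_0^t e^{(A_\pi+\mu I)(t-s)}\tilde f(s)\dd s$ and treat the two parts separately. The homogeneous part rests on the quadratic (square-function) estimate $\int_0^T\|(-A_0)^{1/2}e^{(A_\pi+\mu I)t}y_0\|_Y^2\dd t\le M\|y_0\|_Y^2$, which uses $\mathcal{D}((-A_0)^{1/2})=V$ and combines the analytic smoothing $\|e^{(A_\pi+\mu I)t}\|_{\mathcal{L}(V^*,Y)}\lesssim t^{-1/2}$ with the uniform boundedness and decay of the semigroup encoded in Lemma~\ref{lemma_stab_a_pi}; the inhomogeneous part rests on the matching maximal-regularity bound $\|\int_0^\cdot e^{(A_\pi+\mu I)(\cdot-s)}\tilde f(s)\dd s\|_{L^2(0,T;V)}\le M\|\tilde f\|_{L^2(0,T;V^*)}$. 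Both bounds are $T$-uniform, which is the crucial point, and both follow from the bounded $H^\infty$-calculus enjoyed by operators associated with coercive forms on Hilbert spaces, or equivalently, after extending $\tilde f$ by zero to $(0,\infty)$, from the resolvent bound $\|(i\omega-(A_\pi+\mu I))^{-1}\|_{\mathcal{L}(V^*,V)}\le M$ uniformly in $\omega$ together with Plancherel's theorem. The feature that distinguishes this from a plain energy argument is that it is the spectral information \eqref{eq:spect_absc} (carried by Lemma~\ref{lemma_stab_a_pi}), rather than mere form coercivity, that makes the constant independent of $T$.

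For the backward system I would set $\tilde r(s)=r(T-s)$, which solves the forward equation $\dot{\tilde r}=A_\pi^*\tilde r+\Phi(T-\cdot)$, $\tilde r(0)=q$. By Lemma~\ref{lemma_stab_a_pi}, $A_\pi^*-\mu I$ generates a bounded semigroup for $\mu\ge-\lambda$, that is, $A_\pi^*+\nu I$ is bounded for $\nu=-\mu\le\lambda$, so the forward result applies verbatim to $A_\pi^*$ with weight $-\mu$. It then remains to track the weights under reversal. Since the function $e^{\mu t}r(t)$, read backward, equals $e^{\mu T}\,e^{-\mu s}\tilde r(s)$ and the $W(0,T)$-norm is invariant under $t\mapsto T-t$, one gets $\|r\|_{W_\mu(0,T)}=e^{\mu T}\|\tilde r\|_{W_{-\mu}(0,T)}$, while a direct substitution yields $\|\Phi(T-\cdot)\|_{L^2_{-\mu}(0,T;V^*)}=e^{-\mu T}\|\Phi\|_{L^2_\mu(0,T;V^*)}$. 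Inserting the forward estimate for $\tilde r$ produces $\|r\|_{W_\mu(0,T)}\le e^{\mu T}M\big(\|q\|_Y+e^{-\mu T}\|\Phi\|_{L^2_\mu(0,T;V^*)}\big)=M\big(\|\Phi\|_{L^2_\mu(0,T;V^*)}+e^{\mu T}\|q\|_Y\big)$, which is exactly the claimed bound, with the weight $e^{\mu T}$ on the terminal datum $q$ emerging naturally from the reversal.
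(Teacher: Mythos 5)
Your proposal is correct and takes essentially the same route as the paper's proof: the change of variables $z=e^{\mu\cdot}y$ reducing \eqref{eq_forward_sys} to the shifted system governed by $A_\pi+\mu I$, the appeal to Lemma~\ref{lemma_stab_a_pi}, and time reversal (with the weight bookkeeping you make explicit) for the backward equation. The only difference is one of detail: where the paper writes that ``standard regularity results for analytic semigroups'' yield existence, uniqueness and a $T$-uniform bound, you supply that content (reduction of the $W(0,T)$-norm to its $L^2(0,T;V)$ component, then Plancherel/maximal regularity), and in doing so you correctly expose that the $T$-uniformity rests on the decay estimate $\|e^{A_\pi t}\|_{\mathcal{L}(Y)}\le M e^{-\lambda t}$ established inside the proof of Lemma~\ref{lemma_stab_a_pi} (equivalently, your uniform imaginary-axis resolvent bound), not on mere boundedness of the shifted semigroup --- a point the paper's citation glosses over and which is delicate at the endpoint $\mu=\lambda$.
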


\begin{proof}
Let us prove the first statement. Let $y \in W(0,T)$.
Defining $y_\mu: = e^{\mu \cdot} y \in W_\mu(0,T)$ and $f_\mu := e^{\mu \cdot}f \in L_{\mu}^2(0,T;V^*)$, we observe that $y$ solves \eqref{eq_forward_sys} if and only if $y_\mu$ is the solution to the following system:
\begin{equation} \label{eq_forward_sys_shifted}
\dot{y}_{\mu} = (A_\pi + \mu I) y_{\mu} + f_\mu, \quad y_\mu(0) = y_0.
\end{equation}
Since $\mu \leq \lambda$, the operator $A_\pi + \mu I$ generates an exponentially stable semigroup, by Lemma \ref{lemma_stab_a_pi}. Standard regularity results for analytic semigroups ensure the existence and uniqueness of a solution to \eqref{eq_forward_sys_shifted}, as well as the existence of a constant $M > 0$ independent of $T$, $y_0$, and $f$ such that $\| y_\mu \|_{W(0,T)} \leq M \big( \| y_0 \| + \| f_\mu \|_{L^2(0,\infty)} \big)$, which is the estimate that was to be proved.

The second statement can be proved similarly with a time-reversal argument.
\end{proof}

We are now ready to analyze \eqref{eq:non_reg_os} in the case where $Q= \Pi$. The key idea is to decouple the system with the help of the variable $r= p- \Pi y$. This variable is indeed the solution to a backward differential equation which is independent of $y$, $u$, and $p$. Let us mention that this remarkable property only holds in the case $Q= \Pi$.

\begin{lemma} \label{lemma:linSysFiniteHorizon}
For all $\mu \in [-\lambda,\lambda]$, for all $T>0$, for all $(y_0,f,g,h,q) \in \Upsilon_{T,\mu}$, there exists a unique $(y,u,p) \in \Lambda_{T,\mu}$ solution to \eqref{eq:non_reg_os} with $Q= \Pi$.
Moreover, there exists a constant $M>0$, independent of $T$ and $(y_0,f,g,h,q)$ such that
\begin{equation} \label{eq:estimateForSysLin}
\| (y,u,p) \|_{\Lambda_{T,\mu}}
\leq M \| (y_0,f,g,h,q) \|_{\Upsilon_{T,\mu}}.
\end{equation}
\end{lemma}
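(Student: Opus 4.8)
The plan is to decouple the system \eqref{eq:non_reg_os} (with $Q=\Pi$) by introducing the new unknown $r := p - \Pi y$, exactly as announced before the statement, and then to treat the resulting equations for $r$ and $y$ successively with Lemma \ref{lemma:exp_stab_forward_sys}. First I would eliminate $u$ through the algebraic relation $\alpha u + B^* p = -h$, giving $u = -\frac{1}{\alpha}(B^* p + h)$, and substitute $p = r + \Pi y$ throughout. Computing $\dot r = \dot p - \Pi \dot y$ from the state and adjoint equations and invoking the Riccati identity \eqref{eq:are}, all terms in $y$ cancel, and $r$ is seen to solve the \emph{decoupled} backward equation $-\dot r = A_\pi^* r + \Phi$, $r(T)=q$, with forcing $\Phi = g + \Pi f - \frac{1}{\alpha}\Pi B h$; the terminal condition $r(T)=q$ comes directly from $p(T)-\Pi y(T)=q$.

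Next, since $\mu \geq -\lambda$, the backward half of Lemma \ref{lemma:exp_stab_forward_sys} applies and yields a unique $r \in W_\mu(0,T)$ with $\| r \|_{W_\mu(0,T)} \leq M\big(\|\Phi\|_{L^2_\mu(0,T;V^*)} + e^{\mu T}\|q\|_Y\big)$. Using $\Pi \in \mathcal{L}(V^*,Y)$ and $B \in \mathcal{L}(U,V^*)$ (so $\Pi B \in \mathcal{L}(U,Y)$), I would bound $\|\Phi\|_{L^2_\mu(0,T;V^*)} \leq M\big(\|f\|_{L^2_\mu} + \|g\|_{L^2_\mu} + \|h\|_{L^2_\mu}\big)$; the weight $e^{\mu T}$ carried by $q$ in the definition of $\Upsilon_{T,\mu}$ is precisely what makes the whole right-hand side controlled by $\|(y_0,f,g,h,q)\|_{\Upsilon_{T,\mu}}$. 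With $r$ fixed, $y$ then solves the forward equation $\dot y = A_\pi y + f - \frac{1}{\alpha}BB^* r - \frac{1}{\alpha}Bh$, $y(0)=y_0$; since $\mu \leq \lambda$, the forward half of Lemma \ref{lemma:exp_stab_forward_sys} gives a unique $y \in W_\mu(0,T)$ with $\|y\|_{W_\mu} \leq M\big(\|y_0\|_Y + \|r\|_{W_\mu} + \|f\|_{L^2_\mu} + \|h\|_{L^2_\mu}\big)$, after noting that $B^* \in \mathcal{L}(V,U)$ places the forcing term in $L^2_\mu(0,T;V^*)$.

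Finally, I would reconstruct $p := r + \Pi y$ and $u := -\frac{1}{\alpha}(B^* p + h)$ and check membership and estimates: $\Pi \in \mathcal{L}(Y,V) \cap \mathcal{L}(V^*,Y)$, together with its time-independence (so $\frac{d}{dt}\Pi y = \Pi \dot y$), gives $\Pi y \in W_\mu(0,T)$ and hence $p \in W_\mu(0,T)$, while $B^* \in \mathcal{L}(V,U)$ gives $u \in L^2_\mu(0,T;U)$; chaining the previous bounds produces \eqref{eq:estimateForSysLin}. Existence follows by verifying that this triple does satisfy \eqref{eq:non_reg_os} (the reverse of the decoupling computation, again using \eqref{eq:are}), and uniqueness follows because the passage $(y,u,p) \mapsto (r,y)$ is invertible, each decoupled equation having a unique solution by Lemma \ref{lemma:exp_stab_forward_sys}.

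The main obstacle is the rigorous justification of the Riccati cancellation. The identity \eqref{eq:are} is stated only for $y_1,y_2 \in \mathcal{D}(A)$, whereas I apply it to $y(t) \in V$ and need the equality $A^*\Pi y + \Pi A y + C^*C y - \frac{1}{\alpha}\Pi BB^*\Pi y = 0$ to hold in $V^*$ for a.e.\@ $t$. This requires checking that each of the four compositions is continuous from $V$ into $V^*$ (using $\Pi \in \mathcal{L}(Y,V)\cap\mathcal{L}(V^*,Y)$, $A,A^* \in \mathcal{L}(V,V^*)$, $B \in \mathcal{L}(U,V^*)$, $C \in \mathcal{L}(Y,Z)$) and then extending \eqref{eq:are} from the dense subspace $\mathcal{D}(A)$ to all of $V$ by continuity, so that the equation for $r$ genuinely holds in $L^2_\mu(0,T;V^*)$ and $r \in W_\mu(0,T)$ is a legitimate solution in the sense of Lemma \ref{lemma:exp_stab_forward_sys}.
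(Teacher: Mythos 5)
Your proposal is correct and follows essentially the same route as the paper's proof: the same decoupling $r = p - \Pi y$ with the same forcing $\Phi = \Pi f - \frac{1}{\alpha}\Pi B h + g$, the same successive application of Lemma \ref{lemma:exp_stab_forward_sys} to the backward equation for $r$ (using $\mu \geq -\lambda$) and the forward equation for $y$ (using $\mu \leq \lambda$), the same reconstruction $p = \Pi y + r$, $u = -\frac{1}{\alpha}(h + B^*p)$ with the chained estimates, and the same uniqueness argument through the decoupled homogeneous system. Your closing point about extending the Riccati identity \eqref{eq:are} from $\mathcal{D}(A)$ to $V$ by density and continuity of the four compositions is a detail the paper passes over silently, and treating it as you describe only makes the verification step more rigorous.
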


\begin{remark}
All along the article, the variable $M$ is a positive constant whose value may change from an inequality to the next one. When an estimate involving a constant $M$ independent of some variables (for example $T$) has to be proved, then all constants $M$ used in the corresponding proof are also independent of these variables.
\end{remark}

\begin{proof}[Proof of Lemma \ref{lemma:linSysFiniteHorizon}]
Let $\Phi \in L_{\mu}^2(0,T;V^*)$ be defined by $\Phi= \Pi f - \frac{1}{\alpha}\Pi B h + g$.
Let us denote by $r \in W_{\mu}(0,T)$ the unique solution to the system $-\dot{r} = A_\pi ^* r + \Phi$, $t \in [0,T)$, $r(T)= q$.
By Lemma \ref{lemma:exp_stab_forward_sys}, there exists a constant $M$, independent of $T$ and $(y_0,f,g,h,q)$ such that
\begin{equation} \label{eq:estimateOnR}
\| r \|_{W_{\mu}(0,T)} \leq M \big( \| \Phi \|_{L_{\mu}^2(0,T;V^*)} + e^{\mu T} \| q \|_Y \big) 
\leq M \| (y_0,f,g,h,q) \|_{\Upsilon_{T,\mu}}.
\end{equation}
By Lemma \ref{lemma:exp_stab_forward_sys}, the following system has a unique solution $y \in W_{\mu}(0,T)$: 
\begin{equation} \label{eq:def_y}
\dot{y}=  A_\pi y - {\textstyle \frac{1}{\alpha}} Bh + f - {\textstyle \frac{1}{\alpha}} BB^*r, \quad y(0)= y_0.
\end{equation}
Since $\big\| - {\textstyle \frac{1}{\alpha}} Bh + f - {\textstyle \frac{1}{\alpha}} BB^*r \big\|_{L_\mu^2(0,T;V^*)} \leq M \| (y_0,f,g,h,q) \|_{\Upsilon_{T,\mu}}$, we have that
\begin{equation} \label{eq:estimateOnY}
\| y \|_{W_{\mu}(0,T)}
\leq M \| (y_0,f,g,h,q) \|_{\Upsilon_{T,\mu}}.
\end{equation}
Let us set $p = \Pi y + r$.
Since $\Pi \in \mathcal{L}(Y,V) \cap \mathcal{L}(V^*,Y)$, we have that $\Pi y \in L^2(0,T;V)\cap H^1(0,\infty;Y)$. Therefore, using \eqref{eq:estimateOnR} and \eqref{eq:estimateOnY}, we obtain that $p \in W_{\mu}(0,T)$ with $\| p \|_{W_{\mu}(0,T)} \leq M \| (y_0,f,g,h,q) \|_{\Upsilon_{T,\mu}}$.
We finally define $u= - \frac{1}{\alpha}(h+B^*p )$.
We deduce from the estimate on $p$ that
$\| u \|_{L_{\mu}^2(0,T;U)} \leq M \| (y_0,f,g,h,q) \|_{\Upsilon_{T,\mu}}$. The bound \eqref{eq:estimateForSysLin} is proved.

Let us check that $(y,u,p)$ is a solution to the linear system \eqref{eq:non_reg_os}. It follows from the definition of $u$ that $\alpha u + B^*p= -h$. Using $p= \Pi y + r$ and \eqref{eq:def_y}, we obtain that
\begin{align*}
Ay + Bu + f= \ & Ay - {\textstyle \frac{1}{\alpha}} Bh - {\textstyle \frac{1}{\alpha}} BB^*p + f \\
= \ & Ay - {\textstyle \frac{1}{\alpha}} Bh - {\textstyle \frac{1}{\alpha}} BB^* \Pi y - {\textstyle \frac{1}{\alpha}} BB^*r + f
= \dot{y}.
\end{align*}
It remains to verify that the adjoint equation is satisfied. We obtain with the definitions of $p$, $y$, $\Phi$, and $A_\pi$ that $p(T)-\Pi y(T)= q$ and that
\begin{equation*}
\dot{p}
= \Pi \dot{y}+\dot{r} \\
= \Pi A_\pi y + \underbrace{\big( - {\textstyle \frac{1}{\alpha}} \Pi Bh + \Pi f \big)}_{= \Phi- g} + \underbrace{ \big(-{\textstyle \frac{1}{\alpha}} \Pi BB^*  \big)}_{= A_\pi^* - A^*}r + \dot{r}.
\end{equation*}
Using $\dot{r} + A_\pi^* r + \Phi= 0$ and \eqref{eq:are}, we obtain that
\begin{align*}
\dot{p}= \ & \Pi A_\pi y + \Phi - g + A_\pi^*r - A^* r + \dot{r} \\
 = \ & \Pi A_\pi y -A^* r-g
 = \big(\Pi A - {\textstyle \frac{1}{\alpha}} \Pi BB^*\Pi \big)y - A^*r - g \\
 = \ & -\big(A^* \Pi + C^* C \big)y - A^*r - g
 = - A^* p - C^*C y - g.
\end{align*}
Therefore, the adjoint equation is satisfied and $(y,u,p)$ is a solution to \eqref{eq:estimate_non_reg_os}.

It remains to show uniqueness. To this end, it suffices to consider the case where $(y_0,f,g,h,q)= (0,0,0,0,0)$. Let $(y,u,p)$ be a solution to \eqref{eq:non_reg_os}. Let $r= p-\Pi y$. One can easily see that $-\dot{r}= A_\pi^* r$, $r(T)= 0$, thus $r=0$. Then, one has to check that $y$ satisfies \eqref{eq:def_y}, with $y_0=0$, $f= 0$, $h=0$, and $r= 0$. Therefore, $y=0$. Finally, we obtain that $p= r + \Pi y= 0$ and that $u= -\frac{1}{\alpha}(h + B^*p)= 0$. Uniqueness is proved.
%
%
\end{proof}

\subsection{General case} \label{subsection:general}

We give a proof of Theorem \ref{theo:non_reg_os} in this subsection. We consider successively the cases $\mu= 0$, $\mu= -\lambda$, and $\mu= \lambda$.

\subsubsection{Case without weight}

Theorem \ref{theo:non_reg_os}, in the case where $\mu= 0$, can be established by analyzing the optimal control problem associated with \eqref{eq:non_reg_os}. This is the result of Lemma \ref{lemma:lq_pb} below. The proof is classical and uses very similar arguments to the ones used in \cite[Proposition 3.1]{BreKP18a}. 

We begin with a classical lemma, following from the detectability assumption.

\begin{lemma} \label{lemma:detectability}
There exists a constant $M>0$ such that for all $T >0$, for all $y_0 \in Y$, for all $u \in L^2(0,T;U)$, for all $f \in L^2(0,T;V^*)$, the solution $y \in W(0,T)$ to the system
\begin{equation*}
\dot{y}= Ay + Bu + f, \quad y(0)= y_0
\end{equation*}
satisfies the following estimate:
\begin{equation*}
\| y \|_{W(0,T)} \leq M \big( \| y_0 \|_Y + \| u \|_{L^2(0,T;U)} + \| f \|_{L^2(0,T;V^*)} + \| Cy \|_{L^2(0,T;Z)} \big).
\end{equation*}
\end{lemma}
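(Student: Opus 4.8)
The plan is to exploit the detectability assumption (A3), which provides an operator $K \in \mathcal{L}(Z,Y)$ such that $A - KC$ generates an exponentially stable semigroup on $Y$. The strategy is to rewrite the dynamics so that the stable operator $A - KC$ appears as the generator, absorbing the remaining terms into a modified right-hand side.

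First I would rewrite the equation $\dot{y} = Ay + Bu + f$ as
\begin{equation*}
\dot{y} = (A - KC)y + \big( Bu + f + KCy \big), \quad y(0) = y_0.
\end{equation*}
Since $A - KC$ generates an exponentially stable semigroup, it in particular generates a bounded semigroup, and standard regularity results for such systems (analogous to Lemma \ref{lemma:exp_stab_forward_sys}, applied with the generator $A - KC$ and weight $\mu = 0$) yield a constant $M > 0$, independent of $T$, such that
\begin{equation*}
\| y \|_{W(0,T)} \leq M \big( \| y_0 \|_Y + \| Bu + f + KCy \|_{L^2(0,T;V^*)} \big).
\end{equation*}
The point of using the stable generator $A - KC$ rather than $A$ itself is precisely that the constant $M$ can be taken uniform in $T$: the exponential decay of $e^{(A-KC)t}$ prevents the blow-up of the constant as $T \to \infty$ that one would otherwise encounter.

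Next I would estimate the new right-hand side termwise. Using $B \in \mathcal{L}(U,V^*)$ we bound $\| Bu \|_{L^2(0,T;V^*)} \leq M \| u \|_{L^2(0,T;U)}$, and using $K \in \mathcal{L}(Z,Y)$ together with the continuous embedding $Y \hookrightarrow V^*$ we bound $\| KCy \|_{L^2(0,T;V^*)} \leq M \| Cy \|_{L^2(0,T;Z)}$. The term $f$ is carried unchanged. Collecting these gives
\begin{equation*}
\| Bu + f + KCy \|_{L^2(0,T;V^*)} \leq M \big( \| u \|_{L^2(0,T;U)} + \| f \|_{L^2(0,T;V^*)} + \| Cy \|_{L^2(0,T;Z)} \big),
\end{equation*}
and combining with the previous display yields exactly the claimed estimate.

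The main obstacle, such as it is, lies in justifying the uniform-in-$T$ regularity bound for the forward system driven by the stable generator $A - KC$. I expect this to follow the same pattern as the proof of Lemma \ref{lemma:exp_stab_forward_sys}: one passes to the semigroup formulation, uses the exponential decay bound $\| e^{(A-KC)t} \|_{\mathcal{L}(Y)} \leq M e^{-\omega t}$ for some $\omega > 0$, and invokes analyticity to control the $L^2(0,T;V)$ and $L^2(0,T;V^*)$ norms of the trajectory and its derivative. The only subtlety is that the driving term $KCy$ depends on the solution $y$ itself; but since $C \in \mathcal{L}(Y,Z)$ and $KCy$ is treated as a known forcing term measured in the norm $\| Cy \|_{L^2(0,T;Z)}$ appearing on the right-hand side of the target estimate, this is not a genuine fixed-point issue — the term is simply carried over to the right-hand side rather than solved for, so the argument remains purely linear and the estimate is a straightforward consequence of the semigroup bound.
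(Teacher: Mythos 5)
Your proposal is correct and follows essentially the same route as the paper: rewrite the dynamics with the detectable operator $A-KC$ as generator, treat $Bu+f+KCy$ as a known forcing term, and invoke the uniform-in-$T$ estimate for the exponentially stable system. The only cosmetic difference is that the paper introduces an auxiliary solution $z$ of the rewritten system and verifies $z=y$ via the error equation $\dot{e}=(A-KC)e$, $e(0)=0$ --- precisely the uniqueness step you absorb into ``standard regularity results'' when applying the estimate directly to $y$.
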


\begin{proof}
Let $z \in W(0,T)$ be the solution to
\begin{equation*}
\dot{z}= Az + Bu + f + KC(y-z), \quad z(0)= y_0,
\end{equation*}
where $K$ is given by Assumption (A3). The above system can be re-written as follows:
\begin{equation*}
\dot{z}= (A-KC) z + Bu + f + KCy, \quad z(0)= y_0.
\end{equation*}
Since $(A-KC)$ is exponentially stable, there exists a constant $M$, independent of $T$, $y_0$, $u$, $f$, and $y$ such that
\begin{align}
\| z \|_{W(0,T)} \leq \ & M \big( \| y_0 \|_Y + \| Bu + f + KCy \|_{L^2(0,T;V^*)} \big) \notag \\
\leq \ & M \big( \| y_0 \|_Y + \| u \|_{L^2(0,T;U)} + \| f \|_{L^2(0,T;V^*)} + \| Cy \|_{L^2(0,T;Z)} \big).
\label{eq:obs_1}
\end{align}
Observing that $e:= z-y$ is the solution to $\dot{e}= (A-KC)e$, $e(0)=0$, we obtain that $e=0$ and that $z=y$. Thus $y$ satisfies \eqref{eq:obs_1}, as was to be proved.
\end{proof}

\begin{lemma} \label{lemma:lq_pb}
For all $T >0$, for all $Q \in \mathcal{Q}$, for all $(y_0,f,g,h,q) \in \Upsilon_{T,0}$, the following optimal control problem
\begin{equation*} \label{eq:lin_quad} \tag{$LQ$}
\left\{
\begin{array}{l}
{\displaystyle \inf_{ \begin{subarray}{c} y \in W(0,T) \\ u \in L^2(0,T;U) \end{subarray}}
\Big[ \int_0^T {\textstyle \frac{1}{2}} \| Cy(t) \|_{Z}^2 + \langle g(t),y(t) \rangle + {\textstyle \frac{\alpha}{2} } \|u(t)\|_U^2 + \langle h(t),u(t) \rangle_U \dd t } \\[-0.5em]
\qquad \qquad \qquad \qquad \qquad \qquad {\displaystyle + {\textstyle \frac{1}{2}} \langle y(T), Q y(T) \rangle_Y + \langle q, y(T) \rangle_Y \Big], } \\[1em]
\qquad \text{subject to: } \dot{y}= Ay + Bu + f, \quad y(0)= y_0,
\end{array}
\right.
\end{equation*}
has a unique solution $(y,u)$. There exists a unique associated adjoint variable $p$, which is such that $(y,u,p)$ is the unique solution to \eqref{eq:non_reg_os}. Moreover, there exists a constant $M$, independent of $T$, $Q$, and $(y_0,f,g,h,q)$ such that
\begin{equation} \label{eq:estim_non_weight}
\| (y,u,p) \|_{\Lambda_{T,0}} \leq M \| (y_0,f,g,h,q) \|_{\Upsilon_{T,0}}.
\end{equation}
\end{lemma}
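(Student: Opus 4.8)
The plan is to solve \eqref{eq:lin_quad} by the direct method, read off the optimality system, and then prove the uniform bound \eqref{eq:estim_non_weight}, the delicate issue being the independence of the constant from $T$ and $Q$. Since $\alpha>0$, I would first note that the control-to-state map $u\mapsto y$ defined by the state equation is affine and continuous (for instance by Lemma~\ref{lemma:detectability}), so the reduced cost $\mathcal{J}(u):=J(u,y(u))$ is continuous, strictly convex and coercive on $L^2(0,T;U)$: the quadratic term $\tfrac{\alpha}{2}\|u\|_{L^2}^2$ dominates every linear contribution. The direct method then gives a unique minimizer $(y,u)$. Introducing $p\in W(0,T)$ as the unique solution of the backward equation $-\dot p-A^*p-C^*Cy=g$, $p(T)-Qy(T)=q$---well posed because $A^*$ generates an analytic semigroup, $C^*Cy+g\in L^2(0,T;V^*)$ and $Qy(T)+q\in Y$---the first-order condition $\mathcal{J}'(u)=0$ reads $\alpha u+B^*p=-h$. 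Thus $(y,u,p)$ solves \eqref{eq:non_reg_os} with $\mu=0$; conversely, any solution of \eqref{eq:non_reg_os} is a critical point of the strictly convex $\mathcal{J}$, hence coincides with the minimizer, which yields uniqueness of $p$ and of the whole triple.

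Write $\rho:=\|(y_0,f,g,h,q)\|_{\Upsilon_{T,0}}$. To bound the optimal cost above uniformly in $T$, I would compare with the Riccati feedback: let $\hat y$ solve $\dot{\hat y}=A_\pi\hat y+f$, $\hat y(0)=y_0$, and set $\hat u=-\tfrac1\alpha B^*\Pi\hat y$, so that $A\hat y+B\hat u=A_\pi\hat y$. Lemma~\ref{lemma:exp_stab_forward_sys} (with $\mu=0\le\lambda$) gives $\|\hat y\|_{W(0,T)}\le M(\|y_0\|_Y+\|f\|_{L^2(0,T;V^*)})\le M\rho$ and hence $\|\hat u\|_{L^2(0,T;U)}\le M\|\hat y\|_{W(0,T)}\le M\rho$, uniformly in $T$ and, since $\mathcal{Q}$ is bounded, uniformly in $Q$. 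Estimating each term of $J(\hat u,\hat y)$ then yields $J(u,y)\le J(\hat u,\hat y)\le M\rho^2$. As $Q\ge0$, the terminal term of $J(u,y)$ is nonnegative, so
\[\tfrac{\alpha}{2}\|u\|_{L^2}^2+\tfrac12\|Cy\|_{L^2}^2\le J(u,y)-\langle g,y\rangle_{L^2}-\langle h,u\rangle_{L^2}-\langle q,y(T)\rangle.\]
Bounding $\|y\|_{W(0,T)}$ by Lemma~\ref{lemma:detectability} through $M(\rho+\|u\|_{L^2}+\|Cy\|_{L^2})$ and absorbing the linear terms with Young's inequality, I obtain $\|u\|_{L^2}+\|Cy\|_{L^2}\le M\rho$ and, back in Lemma~\ref{lemma:detectability}, $\|y\|_{W(0,T)}\le M\rho$.

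For the adjoint I would establish a backward analogue of Lemma~\ref{lemma:detectability} based on the stabilizability assumption (A2) rather than on detectability. Rewriting the adjoint equation as $-\dot p=(A^*+F^*B^*)p+(C^*Cy+g-F^*B^*p)$ and using that $A^*+F^*B^*$, the generator of the adjoint of the exponentially stable semigroup $e^{(A+BF)t}$, is exponentially stable, the same argument as in Lemma~\ref{lemma:detectability} applied backward in time gives
\[\|p\|_{W(0,T)}\le M\big(\|Qy(T)+q\|_Y+\|C^*Cy+g\|_{L^2(0,T;V^*)}+\|B^*p\|_{L^2(0,T;U)}\big).\]
The decisive observation is that $\|B^*p\|_{L^2(0,T;U)}=\|\alpha u+h\|_{L^2(0,T;U)}\le\alpha\|u\|_{L^2}+\|h\|_{L^2}\le M\rho$ thanks to the optimality relation, while the other two terms are controlled by the state bound of the previous paragraph and the boundedness of $\mathcal{Q}$. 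Hence $\|p\|_{W(0,T)}\le M\rho$, and combining the three bounds proves \eqref{eq:estim_non_weight}; uniqueness of the triple also follows by applying it to zero data.

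The main obstacle is precisely the uniform-in-$T$ bound on $p$: a direct Gronwall estimate for a backward equation driven by $A^*$ grows with $T$, and the only way to close the estimate uniformly is the feedback reformulation above, which relies on the interplay of stabilizability with the optimality relation $B^*p=-\alpha u-h$. Obtaining simultaneous independence from $Q$ is the second point to watch: it uses $Q\ge0$ (so that the terminal cost cannot spoil the lower bound on the quadratic part) together with $\sup_{Q\in\mathcal{Q}}\|Q\|_{\mathcal{L}(Y)}<\infty$.
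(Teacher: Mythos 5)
Your proposal is correct, and for the state and control it follows the paper's own path: an upper bound on the optimal value via a stabilizing feedback pair (you use the Riccati feedback $-\frac{1}{\alpha}B^*\Pi\hat y$, the paper uses the feedback $F$ of (A2) --- an immaterial difference), followed by Lemma~\ref{lemma:detectability} and a Young-type absorption to get $\|u\|_{L^2}+\|Cy\|_{L^2}\leq M\rho$ and then $\|y\|_{W(0,T)}\leq M\rho$, with $\rho:=\|(y_0,f,g,h,q)\|_{\Upsilon_{T,0}}$. Where you genuinely diverge is the uniform-in-$T$ bound on the adjoint, which is the heart of the lemma. The paper closes this step by recycling its Riccati decoupling: it observes that $(y,u,p)$ also solves \eqref{eq:non_reg_os} with $Q$ replaced by $\Pi$ and terminal datum $\tilde q=(Q-\Pi)y(T)+q$, whose norm is already controlled by the state bound, and then invokes Lemma~\ref{lemma:linSysFiniteHorizon}. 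You instead rewrite the adjoint equation with the generator $A^*+F^*B^*=(A+BF)^*$ and neutralize the perturbation term through the first-order condition, $-F^*B^*p=F^*(\alpha u+h)$, so the right-hand side consists only of already-bounded quantities; an a priori estimate for the backward equation driven by $A^*+F^*B^*$ (exponentially stable and analytic, so the same time-reversal argument as in Lemma~\ref{lemma:exp_stab_forward_sys} applies, with a constant independent of $T$) then yields $\|p\|_{W(0,T)}\leq M\rho$. Both routes are sound. The paper's buys economy within its architecture: Lemma~\ref{lemma:linSysFiniteHorizon} is needed anyway for the weighted cases $\mu=\pm\lambda$, so the unweighted estimate comes essentially for free. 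Yours buys independence from the Riccati theory: it shows the $\mu=0$ estimate requires only stabilizability (A2) and the optimality relation, at the modest cost of one additional (routine) auxiliary estimate for $(A+BF)^*$, which is of the same nature as, but not literally, Lemma~\ref{lemma:exp_stab_forward_sys}. One small remark: what you need there is not really a ``backward analogue of Lemma~\ref{lemma:detectability}'' --- no bootstrap is required; after the substitution $-F^*B^*p=F^*(\alpha u+h)$ it is simply the linear backward estimate applied directly to the equation satisfied by $p$.
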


\begin{proof}
We follow the same lines as in \cite[Lemma 3.2]{BreKP18a}.
Let us first bound the value of the problem. Let $y \in W(0,T)$ be the solution to
\begin{equation*}
\dot{y}= (A +BF)y + f, \quad y(0)= y_0,
\end{equation*}
where $F$ is given by Assumption (A2).
Since $(A +BF)$ is exponentially stable, there exists a constant $M$ such that
\begin{equation*}
\| y \|_{W(0,T)} \leq M \max \big( \| y_0 \|_Y, \| f \|_{L^2(0,T;V^*)} \big).
\end{equation*}
Let us set $u= Fy$. We have $\| u \|_{L^2(0,T;U)} \leq M \max \big( \| y_0 \|_Y, \| f \|_{L^2(0,T;V^*)} \big)$. Then, one can easily check the existence of a constant $M$ such that
\begin{equation*}
J_1(u,y) \leq M \| (y_0,f,g,h,q) \|_{\Upsilon_{T,0}}^2.
\end{equation*}
Now, we prove the existence of a solution to the problem. Let $(y_n,u_n)_{n \in \mathbb{N}} \in W(0,T) \times L^2(0,T;U)$ be a minimizing sequence such that for all $n \in \mathbb{N}$,
\begin{equation*}
J_1(y_n,u_n) \leq M \| (y_0,f,g,h,q) \|_{\Upsilon_{T,0}}^2.
\end{equation*}
We now look for a lower bound for $J_1$, so that we can further obtain a bound on $(y_n,u_n)$. We have
\begin{align*}
J_1(y_n,u_n) \geq \ & \frac{1}{2} \| Cy_n \|_{L^2(0,T;Z)}^2 - \| g \|_{L^2(0,T;V^*)} \| y_n \|_{W(0,T)} \\
& \qquad + \frac{\alpha}{2} \| u_n \|_{L^2(0,T;U)}^2 - \| h \|_{L^2(0,T;U)} \| u_n \|_{L^2(0,T;U)} - \| q \|_Y \| y_n(T) \|_Y \\
\geq \ & \frac{1}{2} \| Cy_n \|_{L^2(0,T;Z)}^2 + \frac{\alpha}{2} \Big( \| u_n \|_{L^2(0,T;U)}^2 - \frac{\| h \|_{L^2(0,T;U)}}{\alpha} \Big)^2
- \frac{\| h \|_{L^2(0,T;U)}^2}{2 \alpha} \\
& \qquad
- \frac{1}{2\varepsilon} \big( \| g \|_{L^2(0,T;V^*)} + \| q \|_Y \big)^2
- \frac{\varepsilon}{2} \| y_n \|_{W(0,T)}^2.
\end{align*}
Therefore, there exists a constant $M$ such that
\begin{align}
\| Cy_n \| \leq M \Big( \| (y_0,f,g,h,q) \|_{\Upsilon_{T,0}} + \frac{1}{\sqrt{\varepsilon}} \big( \| g \|_{L^2(0,T;V^*)} + \| q \|_Y \big) + \sqrt{\varepsilon} \| y_n \|_{W(0,T)} \Big), \label{eq:estimate_min_seq_1} \\
\| u_n \|  \leq M \Big( \| (y_0,f,g,h,q) \|_{\Upsilon_{T,0}} + \frac{1}{\sqrt{\varepsilon}} \big( \| g \|_{L^2(0,T;V^*)} + \| q \|_Y \big) + \sqrt{\varepsilon} \| y_n \|_{W(0,T)} \Big). \label{eq:estimate_min_seq_2}
\end{align}
Applying Lemma \ref{lemma:detectability} and estimate \eqref{eq:estimate_min_seq_1}, we obtain that
\begin{align*}
\| y_n \|_{W(0,T)} \leq \ & M \big( \| y_0 \|_Y + \| f \|_{L^2(0,T;V^*)} + \| u_n \|_{L^2(0,T;U)} + \| C y_n \|_{L^2(0,T;Z)} \big) \\
\leq \ & M \Big( \| (y_0,f,g,h,q) \|_{\Upsilon_{T,0}} + \sqrt{\varepsilon} \| y_n \|_{W(0,T)} + \frac{1}{\sqrt{\varepsilon}} \big( \| g \| + \| q \|_Y \big) \Big).
\end{align*}
Let us fix $\varepsilon = \frac{1}{(2M)^2}$, where $M$ is the constant obtained in the last inequality. It follows that there exists (another) constant $M > 0$ such that
\begin{equation} \label{eq:estimate_min_seq_3}
\| y_n \|_{W(0,T)} \leq M \| (y_0,f,g,h,q) \|_{\Upsilon_{T,0}}.
\end{equation}
Combined with \eqref{eq:estimate_min_seq_2}, we obtain that
\begin{equation*}
\| u_n \|_{L^2(0,T;U)} \leq M \| (y_0,f,g,h,q) \|_{\Upsilon_{T,0}}.
\end{equation*}
The sequence $(y_n,u_n)_{n \in \mathbb{N}}$ is therefore bounded in $W(0,T) \times L^2(0,T;U)$ and has a weak limit point $(y,u)$ 
satisfying
\begin{equation} \label{eq:estimate_yu}
\max \big( \| y \|_{W(0,T)}, \| u \|_{L^2(0,T;U)} \big)
\leq M \| (y_0,f,g,h,q) \|_{\Upsilon_{T,0}}.
\end{equation}
One can prove the optimality of $(y,u)$ with the same techniques as those used for the proof of \cite[Proposition 2]{BreKP17b}.

Consider now the solution $p$ to the adjoint system
\begin{equation} \label{eq:adjoint_lq}
-\dot{p} - A^* p - C^* C y = g, \quad p(T) - Q y(T)= q.
\end{equation}
The optimality conditions for the problem yield
$\alpha u + B^*p  + h= 0$,
see e.g.\@ \cite{HPUU09}. It follows that $(y,u,p)$ is a solution to \eqref{eq:non_reg_os}.

Let us prove the uniqueness. If $(y,u,p)$ is a solution to \eqref{eq:non_reg_os}, then one can prove that $(y,u)$ is a solution to problem \eqref{eq:lin_quad} with associated costate $p$. Therefore, it suffices to prove the uniqueness of the solution to \eqref{eq:non_reg_os}. To this end, it suffices to consider the case where $(y_0,f,g,h,q)=(0,0,0,0,0)$. Let $(y,u,p)$ be a solution to \eqref{eq:non_reg_os}. Then $(y,u)$ is a solution to \eqref{eq:lin_quad} and one can check that \eqref{eq:estimate_yu} holds. Thus, $(y,u)= (0,0)$ and then, $p= 0$, which proves the uniqueness.

It remains to prove the a priori bound. Observe that $(y,u,p)$ is the solution to 
\begin{equation} \label{eq:pi_sys}
\begin{cases}
\begin{array}{rll}
y(0) = & \! \! \! y_0 \qquad & \text{in $Y$} \\
\dot{y}-(Ay + Bu) = & \! \! \! f \qquad & \text{in $L^2(0,T;V^*)$} \\
-\dot{p} - A^* p - C^* C y = & \! \! \! g & \text{in $L^2(0,T;V^*)$} \\
\alpha u + B^*p = & \! \! \! - h & \text{in $L^2(0,T;U)$} \\
p(T) - \Pi y(T)= & \! \! \! \tilde{q} & \text{in $Y$},
\end{array}
\end{cases}
\end{equation}
where $\tilde{q}= (Q-\Pi) y(T) + q$. By \eqref{eq:estimate_yu}, we have $\| \tilde{q} \|_Y \leq M \big( \| (y_0,f,g,h,q) \|_{\Upsilon_{T,0}}$. Thus by Lemma \ref{lemma:linSysFiniteHorizon}, $\| (y,u,p) \|_{\Lambda_{T,0}} \leq M \| (y_0,f,g,h,\tilde{q}) \|_{\Upsilon_{T,0}}
\leq M \| (y_0,f,g,h,q) \|_{\Upsilon_{T,0}}$,
which concludes the proof.
\end{proof}

\subsubsection{Case of a negative weight}


\begin{proof}[Proof of Theorem \ref{theo:non_reg_os}: the case $\mu= -\lambda$]
Let $(y_0,f,g,h,q) \in \Upsilon_{T,-\lambda}$. The following inequality can be easily checked:
$\| (f,g,h) \|_{L^2(0,T)} \leq e^{\lambda T} \| (f,g,h) \|_{L_{-\lambda}^2(0,T)}$.
Therefore, by Lemma \ref{lemma:lq_pb}, the system \eqref{eq:non_reg_os} has a unique solution $(y,u,p)$, satisfying
\begin{align*}
\| (y,u,p) \|_{\Lambda_{T,0}}
\leq \ & M \max \big( \| y_0 \|_Y, \| (f,g,h) \|_{L^2(0,T)}, \| q \|_Y \big) \\
\leq \ & M \max \big( \| y_0 \|_Y, e^{\lambda T} \| (f,g,h) \|_{L_{-\lambda}^2(0,T)}, \| q \|_Y \big).
\end{align*}
It follows that $\| y(T) \|_Y \leq M \max \big( \| y_0 \|_Y, e^{\lambda T} \| (f,g,h) \|_{L_{-\lambda}^2(0,T)}, \| q \|_Y \big)$
and then that
\begin{align}
e^{-\lambda T} \| y(T) \|_Y \leq \ & M \max \big( e^{-\lambda T} \| y_0 \|_Y, \| (f,g,h) \|_{L_{-\lambda}^2(0,T)}, e^{-\lambda T} \| q \|_Y \big) \notag \\
\leq \ & M \| (y_0,f,g,h,q) \|_{\Upsilon_{T,-\lambda}}, \label{eq:estim_non_reg2}
\end{align}
since $e^{-\lambda T} \leq 1$.
The key idea now is to observe that $y(0)= y_0$, $\mathcal{H}(y,u,p)= (f,g,-h)$, and $p(T)-\Pi y(T)= \tilde{q}$, where $\tilde{q}= (Q-\Pi)y(T) + q$.
Thus, by Lemma \ref{lemma:linSysFiniteHorizon},
\begin{align}
\| (y,u,p) \|_{\Lambda_{T,-\lambda}}
\leq \ & M \| (y_0,f,g,h,\tilde{q}) \|_{\Upsilon_{T,-\lambda}} \notag \\
\leq \ & M \big( \| (y_0,f,g,h, q) \|_{\Upsilon_{T,-\lambda}} + e^{-\lambda T} \| (Q-\Pi) \|_{\mathcal{L}(Y)} \| y(T) \|_Y \big) \notag \\
\leq \ & M \| (y_0,f,g,h, q) \|_{\Upsilon_{T,-\lambda}} + Me^{-\lambda T} \| y(T) \|_Y, \label{eq:estim_non_reg3}
\end{align}
since $\mathcal{Q}$ is bounded.
Estimate \eqref{eq:estimate_non_reg_os} follows, combining \eqref{eq:estim_non_reg2} and \eqref{eq:estim_non_reg3}.
\end{proof}

\subsubsection{Case of positive weight}


The approach that we propose for dealing with the case $\mu= \lambda$ requires some more advanced tools, that we introduce now.
For a given $\theta \in (0,T)$, we make use of the following \emph{mixed} weighted space:
\begin{equation*}
\| u \|_{L^2_{\lambda,-\lambda}(0,T;U)}
= \| e^{\rho(\cdot)} u(\cdot) \|_{L^2(0,T;U)},
\end{equation*}
where
\begin{equation*}
\rho(t) = \lambda t, \text{ for $t \in [0,T-\theta]$,} \quad
\rho(t)= 2\lambda (T-\theta) - \lambda t, \text{ for $t \in [T-\theta,T]$}.
\end{equation*}
Observe that $\rho$ is continuous and piecewise affine, with $\dot{\rho}(t)= \lambda$ for $t \in [0,T-\theta)$ and $\dot{\rho}(t)= -\lambda$ for $t \in (T-\theta,T]$.
In a nutshell: We use a positive weight on $(0,T-\theta)$ and a negative weight on $(T-\theta,T)$.
We define similarly the space $L_{-\lambda,\lambda}^2(0,T;V^* \times V^* \times U)$ --- that we often denote by $L_{-\lambda,\lambda}^2(0,T)$ --- and the space $W_{\lambda,-\lambda}(0,T)$.
The spaces $\Lambda_{\lambda,-\lambda}$ and $\Upsilon_{\lambda,-\lambda}$ are defined in a similar way as before, with the corresponding norms
\begin{align*}
\| (y,u,p) \|_{\Lambda_{\lambda,-\lambda}}
= \ & \max \big( \| y \|_{W_{\lambda,-\lambda}(0,T)}, \| u \|_{L^2_{\lambda,-\lambda}(0,T;U)}, \| p \|_{W_{\lambda,-\lambda}(0,T)} \big), \\
\| (y_0,f,g,h,q) \|_{\Upsilon_{\lambda,-\lambda}}
= \ & \max \big( \| y_0 \|_Y, \| (f,g,h) \|_{L^2_{\lambda,-\lambda}(0,T)}, e^{\rho(T)} \| q \|_Y \big).
\end{align*}
The following lemma is a generalization of Lemma \ref{lemma:linSysFiniteHorizon} for mixed weighted spaces.

\begin{lemma} \label{lemma:linSysFiniteHorizon_mixte}
For all $T>0$, for all $(y_0,f,g,h,q) \in \Upsilon_{\lambda,-\lambda}$, the unique solution $(y,u,p)$ to \eqref{eq:non_reg_os} with $Q= \Pi$ satisfies the following bound:
\begin{equation} \label{eq:estimateForSysLin_mixte}
\| (y,u,p) \|_{\Lambda_{\lambda,-\lambda}}
\leq M \| (y_0,f,g,h,q) \|_{\Upsilon_{\lambda,-\lambda}},
\end{equation}
where $M$ is independent of $T$, $\theta$, and $(y_0,f,g,h,q)$.
\end{lemma}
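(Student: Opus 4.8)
The plan is to re-run the decoupling argument from the proof of Lemma~\ref{lemma:linSysFiniteHorizon} unchanged, the only new ingredient being the a priori estimate in the mixed weighted norm. Existence and uniqueness of $(y,u,p)$ are already provided by Lemma~\ref{lemma:linSysFiniteHorizon} (for finite $T$ the space $\Lambda_{\lambda,-\lambda}$ is, as a vector space, the same as $\Lambda_{T,0}$), so it suffices to prove \eqref{eq:estimateForSysLin_mixte} for this solution. Writing $r = p - \Pi y$ and using \eqref{eq:are} together with the definition of $A_\pi$, one checks exactly as before that $r$ solves the backward equation $-\dot r = A_\pi^* r + \Phi$, $r(T) = q$, with $\Phi = \Pi f - \frac1\alpha \Pi B h + g$, while $y$ solves the forward equation $\dot y = A_\pi y - \frac1\alpha Bh + f - \frac1\alpha BB^* r$, $y(0) = y_0$. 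Everything therefore reduces to mixed-weight analogues of the two statements of Lemma~\ref{lemma:exp_stab_forward_sys}: a bound on $r$, then a bound on $y$ obtained by feeding the bound on $r$ into the right-hand side. The estimates on $p = \Pi y + r$ and $u = -\frac1\alpha(h + B^*p)$ follow verbatim from Lemma~\ref{lemma:linSysFiniteHorizon}, using $\Pi \in \mathcal{L}(Y,V)\cap\mathcal{L}(V^*,Y)$.

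I would obtain these two estimates by splitting $(0,T)$ at the kink $t = T-\theta$ and invoking the single-weight Lemma~\ref{lemma:exp_stab_forward_sys} on each piece; throughout, I write $\|\cdot\|_{W_{\lambda,-\lambda}(a,b)}$ for the $e^{\rho(\cdot)}$-weighted $W$-norm restricted to a subinterval $(a,b)\subset(0,T)$, and similarly for the $L^2$-norms. The structural reason the splitting succeeds is that the slope $\dot\rho$ takes only the values $\lambda$ on $(0,T-\theta)$ and $-\lambda$ on $(T-\theta,T)$, both lying in $[-\lambda,\lambda]$; hence by Lemma~\ref{lemma_stab_a_pi} the shifted forward generators $A_\pi + \dot\rho I$ (which require $\dot\rho \le \lambda$) and the shifted backward generators $A_\pi^* - \dot\rho I$ (which require $\dot\rho \ge -\lambda$) all generate bounded semigroups. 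In other words, the decreasing weight on the right is exactly what the backward equation, whose data sits at $T$, wants, and the increasing weight on the left is exactly what the forward equation, whose data sits at $0$, wants.

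For $r$, I would solve first on $(T-\theta,T)$, where $\rho(t) = \lambda(T-\theta) - \lambda\big(t-(T-\theta)\big)$ equals the constant $e^{\lambda(T-\theta)}$ times the pure negative weight $e^{-\lambda(t-(T-\theta))}$. After the shift $s = t-(T-\theta)$, the negative-weight statement of Lemma~\ref{lemma:exp_stab_forward_sys} applies on $(0,\theta)$ with $\mu = -\lambda$; multiplying the resulting bound by $e^{\lambda(T-\theta)}$ gives $\|r\|_{W_{\lambda,-\lambda}(T-\theta,T)} \le M\big(\|\Phi\|_{L^2_{\lambda,-\lambda}(T-\theta,T)} + e^{\rho(T)}\|q\|_Y\big)$, the constant $e^{\lambda(T-\theta)}$ combining with the $e^{-\lambda\theta}$ attached to $q$ to produce precisely the weight $e^{\rho(T)}$. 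The $L^\infty$ part of this bound also controls $e^{\lambda(T-\theta)}\|r(T-\theta)\|_Y$. I would then solve on $(0,T-\theta)$, where $\rho(t) = \lambda t$ is a pure positive weight admissible for the backward statement with $\mu = \lambda$ and terminal datum $r(T-\theta)$; its contribution $e^{\lambda(T-\theta)}\|r(T-\theta)\|_Y$ is exactly the quantity just bounded, so the two pieces chain into $\|r\|_{W_{\lambda,-\lambda}(0,T)} \le M\big(\|\Phi\|_{L^2_{\lambda,-\lambda}(0,T)} + e^{\rho(T)}\|q\|_Y\big)$. The forward estimate for $y$ is symmetric with the roles of the subintervals reversed: on $(0,T-\theta)$ the positive-weight forward statement ($\mu = \lambda$) controls both $y$ and $e^{\lambda(T-\theta)}\|y(T-\theta)\|_Y$, which then serves as initial datum on $(T-\theta,T)$ for the shifted negative-weight forward statement. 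Applying the resulting forward estimate, of the form $\|y\|_{W_{\lambda,-\lambda}(0,T)} \le M(\|y_0\|_Y + \|w\|_{L^2_{\lambda,-\lambda}(0,T)})$ for $\dot y = A_\pi y + w$, to $w = -\frac1\alpha Bh + f - \frac1\alpha BB^*r$, whose mixed-weight norm is bounded by $M\|(y_0,f,g,h,q)\|_{\Upsilon_{\lambda,-\lambda}}$ via the already-established bound on $r$, closes the argument.

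The main obstacle is the bookkeeping of the weights at the junction $t = T-\theta$: one must check that, because $\rho$ is affine with slope $\pm\lambda$ and continuous with $\rho(T-\theta) = \lambda(T-\theta)$, the constants generated when each subinterval is translated to the origin and Lemma~\ref{lemma:exp_stab_forward_sys} is invoked combine into exactly the prescribed weights, namely $e^{\rho(T)}$ on $q$ and $e^{\rho(T-\theta)}$ on the matching values $r(T-\theta)$ and $y(T-\theta)$, so that the two halves glue together with no residual $T$- or $\theta$-dependent loss and $M$ stays uniform. The regularity across the kink raises no difficulty: $e^{\rho(\cdot)}$ is Lipschitz and $y,r$ are continuous at $T-\theta$, so $e^{\rho}y$ and $e^{\rho}r$ lie in $W(0,T)$ and the global mixed norm is the natural combination of the two subinterval norms.
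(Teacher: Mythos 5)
Your proposal is correct and follows essentially the same route as the paper: the identical decoupling $r = p - \Pi y$ from Lemma \ref{lemma:linSysFiniteHorizon}, combined with an adaptation of Lemma \ref{lemma:exp_stab_forward_sys} to the mixed weight, which the paper only asserts and you carry out concretely by splitting at the kink $t = T-\theta$ and chaining the single-weight estimates through the junction values $r(T-\theta)$ and $y(T-\theta)$. The weight bookkeeping ($e^{\lambda(T-\theta)}e^{-\lambda\theta} = e^{\rho(T)}$, admissibility of $\mu = \pm\lambda$ for both the forward and backward statements) checks out, so your argument is a valid filling-in of the paper's sketch.
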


\begin{proof}
We only give the main lines of the proof. One can obtain estimate \eqref{eq:estimateForSysLin_mixte} with the same decoupling as the one introduced in Lemma \ref{lemma:linSysFiniteHorizon}. The decoupled variables $y$ and $r$ can then be estimated in $W_{\lambda,-\lambda}(0,T)$, after an adaptation of Lemma \ref{lemma:exp_stab_forward_sys} for right-hand sides in $L_{\lambda,-\lambda}^2(0,T;V^*)$.
\end{proof}

\begin{proof}[Proof of Theorem \ref{theo:non_reg_os}: the case $\mu= \lambda$]
Let us first fix some constants. We denote by $M_1$ the constant involved in estimate \eqref{eq:estim_non_weight}. We denote by $M_2$ the constant involved in Lemma \ref{lemma:linSysFiniteHorizon_mixte}. Note that $M_1 \geq 1$ and $M_2 \geq 1$. Finally, $M_3$ denotes an upper bound on $\| Q- \Pi \|_{\mathcal{L}(Y)}$. Let us set
$M_0= 2 M_1 M_2 \geq 1$
and let us fix $\theta > 0$ such that
$M_0 M_3e^{-\lambda \theta} \leq 1$. 
The first four steps of this proof deal with the case where $T \geq \theta$. We will consider the case $T < \theta$ in Step 5.
Take now $T \geq \theta$ and $(y_0,f,g,h,q) \in \Upsilon_{T,\lambda}$. Since $\Upsilon_{T,\lambda}$ is embedded in $\Upsilon_{T,0}$, the existence of a solution to \eqref{eq:non_reg_os} in $\Lambda_{T,0}$ is guaranteed. Let us denote it by $(\bar{y},\bar{u},\bar{p})$.

\emph{Step 1:} construction of the mappings $\chi_1$ and $\chi_2$.\\
The main idea of the proof consists in obtaining an estimate of $\bar{y}(T)$ with a fixed-point argument. To this end, we introduce two affine mappings, $\chi_1$ and $\chi_2$, defined as follows: $\chi_1 \colon y_T \in Y \mapsto y(T-\theta) \in Y$,
where $y$ is the solution to
\begin{equation} \label{eq:def_chi_1}
\begin{cases}
\begin{array}{rll}
y(0) = & \! \! \! y_0 \qquad & \text{in $Y$} \\
\mathcal{H}(y,u,p)= & \! \! \! (f,g,-h) \quad & \text{in $L_{\lambda,-\lambda}^2(0,T;V^*\times V^* \times U)$} \\
p(T) - \Pi y(T)= & \! \! \! (Q-\Pi)y_T + q & \text{in $Y$}.
\end{array}
\end{cases}
\end{equation}
The mapping $\chi_2$ is defined as follows:
$\chi_2 \colon y_{T-\theta} \in Y \mapsto y(T) \in Y$,
where $y \in W(T-\theta,T)$ is the solution to
\begin{equation*}
\begin{cases}
\begin{array}{rll}
y(T-\theta) = & \! \! \! y_{T-\theta} \qquad & \text{in $Y$} \\
\mathcal{H}(y,u,p)= & \! \! \! (f,g,-h) & \text{in $L^2(T-\theta,T;V^*\times V^* \times U)$} \\
p(T) - Qy(T)= & \! \! \! q & \text{in $Y$}.
\end{array}
\end{cases}
\end{equation*}
The existence and uniqueness of a solution to the above system follows from Lemma \ref{lemma:lq_pb}, after a shifting of the time variable.
Observe that $\bar{y}(T-\theta)= \chi_1(\bar{y}(T))$ and that $\bar{y}(T)= \chi_2(\bar{y}(T-\theta))$. It follows that $\bar{y}(T)$ is a fixed point of $\chi_2 \circ \chi_1$.

\emph{Step 2:} on the Lipschitz-continuity of $\chi_1$ and $\chi_2$.\\
Let $y_T$ and $\tilde{y}_T \in Y$. We have $\chi_1(\tilde{y}_T)-\chi_1(y_T)= y(T-\theta)$, where $y$ is the solution to
\begin{equation*}
\begin{cases}
\begin{array}{rll}
y(0) = & \! \! \! 0 \qquad & \text{in $Y$} \\
\mathcal{H}(y,u,p)= & \! \! \! (0,0,0) & \text{in $L_{\lambda,-\lambda}^2(0,T;V^*\times V^* \times U)$} \\
p(T) - \Pi y(T)= & \! \! \! (Q-\Pi)(\tilde{y}_T-y_T) & \text{in $Y$}.
\end{array}
\end{cases}
\end{equation*}
By Lemma \ref{lemma:linSysFiniteHorizon_mixte},
\begin{equation*}
\| (y,u,p) \|_{\Lambda_{\lambda,-\lambda}}
\leq M_2 e^{\rho(T)} \| Q-\Pi \|_{\mathcal{L}(Y)} \| \tilde{y}_T-y_T \|_Y
\leq M_2 M_3 e^{\rho(T)} \| \tilde{y}_T-y_T \|_Y.
\end{equation*}
Thus, $e^{\rho(T-\theta)} \| y(T-\theta) \|_Y
\leq \| y \|_{W_{\lambda,-\lambda}(0,T)}
\leq M_2 M_3 e^{\rho(T)} \| \tilde{y}_T-y_T \|_Y$.
Observing that $e^{\rho(T)-\rho(T-\theta)} = e^{-\lambda \theta}$, we finally obtain that
\begin{equation*}
\| \chi_1(\tilde{y}_T) - \chi_1(y_T) \|_Y
= \| y(T-\theta) \|_Y
\leq M_2 M_3 e^{-\lambda \theta} \| \tilde{y}_T - y_T \|_Y,
\end{equation*}
which proves that $\chi_1$ is Lipschitz-continuous.
Now, let us take $y_{T-\theta}$ and $\tilde{y}_{T-\theta}$ in $Y$. We have $\chi_2(\tilde{y}_{T-\theta})- \chi_2(\tilde{y}_{T-\theta})= y(T)$, where $y \in W(T-\theta,T)$ is the solution to
\begin{equation*}
\begin{cases}
\begin{array}{rll}
y(T-\theta) = & \! \! \! \tilde{y}_{T-\theta}- y_{T-\theta} \qquad & \text{in $Y$} \\
\mathcal{H}(y,u,p)= & \! \! \! (0,0,0) & \text{in $L^2(T-\theta,T;V^*\times V^* \times U)$} \\
p(T) - Qy(T)= & \! \! \! 0 & \text{in $Y$}.
\end{array}
\end{cases}
\end{equation*}
We obtain with Lemma \ref{lemma:lq_pb} that
$\| y \|_{W(T-\theta,T)} \leq M_1 \| \tilde{y}_{T-\theta}- y_{T-\theta} \|_Y$
and thus
\begin{equation*}
\| \chi_2(\tilde{y}_{T-\theta})- \chi_1(y_{T-\theta}) \|_Y
= \| y(T) \| \leq M_1 \| \tilde{y}_{T-\theta}- y_{T-\theta} \|_Y,
\end{equation*}
proving that $\chi_2$ is Lipschitz-continuous.
As a consequence, the mapping $\chi_2 \circ \chi_1$ is Lipschitz-continuous, with modulus
$M_1 M_2 M_3 e^{-\lambda \theta}
\leq \frac{1}{2} M_0 M_3 e^{-\lambda \theta} \leq \frac{1}{2}$.

\emph{Step 3:} on the invariance of $B_Y \big(R \big)$, with $R= M_0e^{-\lambda(T-\theta)} \| (y_0,f,g,h,q) \|_{\Upsilon_{T,\lambda}}$.\\
Let $y_T \in B_Y(R)$.
Consider the solution $y$ to system \eqref{eq:def_chi_1}.
By Lemma \ref{lemma:linSysFiniteHorizon_mixte}, we have
\begin{equation} \label{eq:non_reg_analysis_2}
\| y \|_{W_{\lambda,-\lambda}(0,T)}
\leq M_2 \max \big( \| y_0 \|_Y, \| (f,g,h) \|_{L^2_{\lambda,-\lambda}(0,T)}, e^{\rho(T)} \| (Q-\Pi) y_{T} + q \|_Y \big).
\end{equation}
Let us estimate the last term in the above expression. We have
\begin{align}
& e^{\rho(T)} \| (Q-\Pi) y_T + q \|_Y \notag \\
& \qquad \quad \leq e^{\lambda T- 2 \lambda \theta} \big( M_3 \| y_T \|_Y  + \| q \|_Y \big) \notag \\
& \qquad \quad \leq e^{- \lambda \theta} M_0 M_3 \| (y_0,f,g,h,q) \|_{\Upsilon_{T,\lambda}} + e^{\lambda T} \| q \|_Y \notag \\
& \qquad \quad \leq \| (y_0,f,g,h,q) \|_{\Upsilon_{T,\lambda}} + e^{\lambda T} \| q \|_Y. \label{eq:non_reg_analysis_3}
\end{align}
Observe that $\| (f,g,h) \|_{L^2_{\lambda,-\lambda}(0,T)} \leq \| (f,g,h) \|_{L^2_\lambda(0,T)}$. Combining \eqref{eq:non_reg_analysis_2}, \eqref{eq:non_reg_analysis_3}, and this last observation, we obtain that
\begin{align*}
\| y \|_{W_{\lambda,-\lambda}(0,T)}
\leq \ & M_2 \max \big( \| y_0 \|_Y, \| (f,g,h) \|_{\lambda}, \| (y_0,f,g,h,q) \|_{\Upsilon_{T,\lambda}} + e^{\lambda T} \| q \|_Y \big) \\
\leq \ &  2 M_2 \| (y_0,f,g,h,q) \|_{\Upsilon_{T,\lambda}}.
\end{align*}
It follows then that
\begin{align}
\| \chi_1(y_T) \|_Y
= \ & e^{-\rho(T-\theta)} \| e^{\rho(T-\theta)} y(T-\theta) \|_Y \notag \\
\leq \ & e^{-\lambda(T-\theta)} \| y \|_{W_{\lambda,-\lambda}(0,T)} \notag \\
\leq \ & 2 M_2 e^{-\lambda(T-\theta)} \| (y_0,f,g,h,q) \|_{\Upsilon_{T,\lambda}}.
\label{eq:non_reg_analysis_11}
\end{align}
Applying now Lemma \ref{lemma:lq_pb}, we obtain that
\begin{align}
\| \chi_2 \circ \chi_1(y_T) \|_Y
\leq \ & M_1 \max \big( \| \chi_1(y_T) \|_Y, \| (f,g,h)_{|(T-\theta,T)} \|_{0}, \| q \|_Y \big). \label{eq:non_reg_analysis_1}
\end{align}
Observing that $e^{\lambda(T-\theta)} \| (f,g,h)_{|(T-\theta,T)} \|_{L^2(T-\theta,T)} \leq \| (f,g,h) \|_{L^2_\lambda(0,T)}$, we deduce from \eqref{eq:non_reg_analysis_11} and \eqref{eq:non_reg_analysis_1} that
\begin{align*}
\| \chi_2 \circ \chi_1(y_{T}) \|_Y
\leq \ & M_1 e^{-\lambda(T-\theta)} \max\big( 2 M_2  \|(y_0,f,g,h,q) \|_{\Upsilon_{T,\lambda}}, \| (f,g,h) \|_{\lambda}, e^{\lambda T} \| q \|_Y \big) \\
\leq \ & M_0 e^{-\lambda(T-\theta)} \| (y_0,f,g,h,q) \|_{\Upsilon_{T,\lambda}}.
\end{align*}
We have proved that $\| \chi_2 \circ \chi_1(y_T) \|_Y \leq R$.

\emph{Step 4:} proof of \eqref{eq:estimate_non_reg_os} (when $T \geq \theta$).\\
We have proved in the second step of the proof that $\chi_2 \circ \chi_1$ is a contraction. Therefore, $\bar{y}(T)$ is the unique fixed-point of $\chi_2 \circ \chi_1$ in $Y$. We have established in the third part of the proof that $B_Y (R)$ is invariant by $\chi_2 \circ \chi_1$. Therefore, by the fixed-point theorem, the mapping $\chi_2 \circ \chi_1$ has a unique fixed point in $B_Y(R)$ which is then necessarily $\bar{y}(T)$.

Observe now that $(\bar{y},\bar{u},\bar{p})$ is the solution to \eqref{eq:def_chi_1}, with $y_T= \bar{y}(T)$. Denoting by $M_4$ the constant involved in estimate \eqref{eq:estimateForSysLin}, we obtain that
\begin{align*}
\| (\bar{y},\bar{u},\bar{p}) \|_{\Lambda_{T,\lambda}}
\leq \ & M_4 \| (y_0,f,g,h, (Q-\Pi)\bar{y}(T) + q) \|_{\Upsilon_{T,\lambda}} \\
\leq \ & M_4 \big( \| (y_0,f,g,h,q) \|_{\Upsilon_{T,\lambda}} + M_3 e^{\lambda T} \| \bar{y}(T) \|_Y \big) \\
\leq \ & M_4 ( 1 + M_0 M_3 e^{\lambda \theta} ) \| (y_0,f,g,h,q) \|_{\Upsilon_{T,\lambda}}.
\end{align*}
This concludes the proof, in the case $T \geq \theta$.

\emph{Step 5:} proof of \eqref{eq:estimate_non_reg_os} (when $T < \theta$).\\
By Lemma \ref{lemma:embedding} and Lemma \ref{lemma:lq_pb}, we have
\begin{align*}
\| (y,u,p) \|_{\Lambda_{T,\lambda}}
\leq \ & M e^{\lambda T} \| (y,u,p) \|_{\Lambda_{T,0}} \\
\leq \ & M e^{\lambda \theta} \| (y_0,f,g,h,q) \|_{\Upsilon_{T,0}} \\
\leq \ & M \| (y_0,f,g,h,q) \|_{\Upsilon_{T,\lambda}},
\end{align*}
which proves \eqref{eq:estimate_non_reg_os} and concludes the proof of the theorem.
\end{proof}

\section{Additional results on optimality systems} \label{section:add_res}

In this subsection, we analyze further the optimality system associated with the linear-quadratic problem \eqref{eq:lin_quad} when $(f,g,h)=(0,0,0)$. Let us fix some notation. For $(y,u) \in W(0,T) \times L^2(0,T)$, we denote
\begin{equation} \label{eq:J_0}
J_{T,Q,q}^0(u,y)
= \int_0^T {\textstyle \frac{1}{2} } \| Cy(t) \|_{Z}^2  + {\textstyle \frac{\alpha}{2} } \|u(t)\|_U^2 \dd t + {\textstyle \frac{1}{2} } \langle y(T), Q y(T) \rangle_Y + \langle q, y(T) \rangle_Y
\end{equation}
and consider the problem
\begin{equation*}  \label{prob:P_0} \tag{$P^0$}
\mathcal{V}^0_{T,Q,q}(y_0)=
\begin{cases} \begin{array}{l}
{\displaystyle \inf_{ \begin{subarray}{c} y \in W(0,T) \\ u \in L^2(0,T;U) \end{subarray}} \ \
J_{T,Q,q}^0(u,y)} \\[1em]
\text{subject to: } \dot{y}= Ay + Bu, \quad y(0)= y_0.
\end{array}
\end{cases}
\end{equation*}
Problem \eqref{prob:P_0} is a particular case of problem \eqref{prob:turnpike} with $(f^{\diamond},g^{\diamond},h^{\diamond})= (0,0,0)$.
The associated optimality system is a linear system (of the form \eqref{eq:non_reg_os}) with parameters $(y_0,T,Q,q)$:
\begin{equation*} \label{eq:optim_sys_zero} \tag{$OS$}
y(0) = y_0, \quad \mathcal{H}(y,u,p)= (0,0,0), \quad p(T) - Q y(T)= q.
\end{equation*}
Since the solution $(y,u,p)$ is a linear mapping of $(y_0,q)$, there exist two linear operators $\Pi(T,Q)$ and $G(T,Q)$ such that
\begin{equation} \label{eq:def_pi_g}
p(0)= \Pi(T,Q) y_0 + G(T,Q)q.
\end{equation}
Let us mention that $\Pi(T,Q)$ can be described as the solution to a differential Riccati equation (see \cite[Part IV]{Benetal07}).

\begin{lemma} \label{lemma:decay_g}
There exists a constant $M>0$ such that for all $T>0$ and for all $Q \in \mathcal{Q}$, $\| \Pi(T,Q) \|_{\mathcal{L}(Y)} \leq M$, $\| G(T,Q) \|_{\mathcal{L}(Y)} \leq M e^{-\lambda T}$, and
\begin{align*}
\| \Pi(T,Q)- \Pi \|_{\mathcal{L}(Y)} \leq \ & M \| Q - \Pi \|_{\mathcal{L}(Y)} e^{-2 \lambda T}.
\end{align*} 
\end{lemma}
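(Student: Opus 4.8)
The plan is to read off all three estimates directly from Theorem \ref{theo:non_reg_os} and Lemma \ref{lemma:linSysFiniteHorizon}, applied with the three available weights $\mu \in \{-\lambda,0,\lambda\}$, by extracting the endpoint values of the solution of \eqref{eq:optim_sys_zero} from the weighted norms. The elementary device underlying everything is that for any $z \in W_\mu(0,T)$ one has $\|z(0)\|_Y \le \|z\|_{W_\mu(0,T)}$ (since the weight equals $1$ at $t=0$) and $e^{\mu T}\|z(T)\|_Y \le \|z\|_{W_\mu(0,T)}$, both following from the embedding $W_\mu(0,T) \hookrightarrow L_\mu^\infty(0,T;Y)$. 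Writing the solution of \eqref{eq:optim_sys_zero} as $(y,u,p)$, so that $p(0)=\Pi(T,Q)y_0 + G(T,Q)q$ by \eqref{eq:def_pi_g}, the bound on $\Pi(T,Q)$ comes from $\mu=0$: taking $q=0$, Theorem \ref{theo:non_reg_os} gives $\|\Pi(T,Q)y_0\|_Y = \|p(0)\|_Y \le \|(y,u,p)\|_{\Lambda_{T,0}} \le M\|y_0\|_Y$. For the decay of $G(T,Q)$ I would instead take $y_0=0$ and use $\mu=-\lambda$: since $\|(0,0,0,0,q)\|_{\Upsilon_{T,-\lambda}} = e^{-\lambda T}\|q\|_Y$, we get $\|G(T,Q)q\|_Y = \|p(0)\|_Y \le \|(y,u,p)\|_{\Lambda_{T,-\lambda}} \le M e^{-\lambda T}\|q\|_Y$.

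For the third and hardest estimate I would first record the identity $\Pi(T,\Pi)=\Pi$. Indeed, when $Q=\Pi$ and $q=0$, the decoupled variable $r=p-\Pi y$ of Lemma \ref{lemma:linSysFiniteHorizon} satisfies $-\dot r = A_\pi^* r$ with $r(T)=0$, hence $r\equiv 0$ and $p(0)=\Pi y_0$. Next, fix $y_0$ and let $(y^Q,u^Q,p^Q)$ and $(y^\Pi,u^\Pi,p^\Pi)$ denote the solutions of \eqref{eq:optim_sys_zero} with $q=0$ and terminal operators $Q$ and $\Pi$ respectively. Their difference $\delta=(y^Q-y^\Pi,\,u^Q-u^\Pi,\,p^Q-p^\Pi)$ satisfies $\delta y(0)=0$ and $\mathcal{H}(\delta)=(0,0,0)$, and a short computation of the two terminal relations yields $\delta p(T) - \Pi\,\delta y(T) = (Q-\Pi)y^Q(T)$. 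Thus $\delta$ solves exactly the decouplable system of Lemma \ref{lemma:linSysFiniteHorizon} with $Q=\Pi$, $y_0=0$, and terminal datum $\tilde q = (Q-\Pi)y^Q(T)$.

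The estimate then emerges from combining the two opposite weights. Applying Lemma \ref{lemma:linSysFiniteHorizon} with $\mu=-\lambda$ to $\delta$ gives $\|(\Pi(T,Q)-\Pi)y_0\|_Y = \|\delta p(0)\|_Y \le M e^{-\lambda T}\|\tilde q\|_Y \le M e^{-\lambda T}\|Q-\Pi\|_{\mathcal{L}(Y)}\|y^Q(T)\|_Y$. To bound $y^Q(T)$ I would apply Theorem \ref{theo:non_reg_os} with the positive weight $\mu=\lambda$ to the system defining $(y^Q,u^Q,p^Q)$: since $\|(y_0,0,0,0,0)\|_{\Upsilon_{T,\lambda}}=\|y_0\|_Y$, the endpoint device gives $e^{\lambda T}\|y^Q(T)\|_Y \le \|y^Q\|_{W_\lambda(0,T)} \le M\|y_0\|_Y$, that is $\|y^Q(T)\|_Y \le M e^{-\lambda T}\|y_0\|_Y$. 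Multiplying the two bounds produces the factor $e^{-2\lambda T}$, and taking the supremum over $\|y_0\|_Y=1$ yields $\|\Pi(T,Q)-\Pi\|_{\mathcal{L}(Y)} \le M\|Q-\Pi\|_{\mathcal{L}(Y)}e^{-2\lambda T}$.

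The main obstacle is precisely this last coupling: neither weight alone delivers the quadratic decay. The negative weight captures the backward decay of the costate at $t=0$ driven by the terminal perturbation $\tilde q$, while the positive weight captures the forward decay of the state at $t=T$ driven by $y_0$; only the product of the two $e^{-\lambda T}$ factors yields $e^{-2\lambda T}$. What makes the argument go through is the identity $\Pi(T,\Pi)=\Pi$, which renders the difference system \emph{decouplable}, so that the sharp weighted bound of Lemma \ref{lemma:linSysFiniteHorizon} (rather than the weaker Theorem \ref{theo:non_reg_os}) is available and $\tilde q$ is recognized to be proportional to $(Q-\Pi)y^Q(T)$.
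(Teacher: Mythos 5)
Your proof is correct, and the underlying mechanism is the same as the paper's: the $e^{-2\lambda T}$ rate comes from multiplying a forward decay estimate (Theorem \ref{theo:non_reg_os} with $\mu=\lambda$, giving $\|y^Q(T)\|_Y \le M e^{-\lambda T}\|y_0\|_Y$) by a backward decay estimate at rate $\lambda$ for the $\Pi$-decoupled quantity whose terminal value is $(Q-\Pi)y^Q(T)$ and whose initial value is $(\Pi(T,Q)-\Pi)y_0$. The packaging differs, though. The paper works with the single $Q$-system and sets $r = p - \Pi y$ directly: it verifies via the algebraic Riccati equation \eqref{eq:are} that $-\dot r = A_\pi^* r$, reads off $r(T)=(Q-\Pi)y(T)$ and $r(0)=(\Pi(T,Q)-\Pi)y_0$ (no auxiliary system needed, since $p(0)=\Pi(T,Q)y_0$ and $y(0)=y_0$), and then propagates backward with the semigroup bound of Lemma \ref{lemma_stab_a_pi}. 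You instead introduce the auxiliary solution with terminal operator $\Pi$, prove the identity $\Pi(T,\Pi)=\Pi$, and apply the weighted a priori bound of Lemma \ref{lemma:linSysFiniteHorizon} with $\mu=-\lambda$ to the difference system --- whose decoupled variable is, in fact, exactly the paper's $r$. Your route buys black-box reuse of Lemma \ref{lemma:linSysFiniteHorizon}, avoiding the explicit ARE computation and the regularity check $r \in W(0,T)$; the paper's route is shorter, needing neither the auxiliary system nor the identity $\Pi(T,\Pi)=\Pi$ (which it obtains only afterwards, as a corollary of the estimate). The difference in the first two bounds is immaterial: the paper extracts both from a single application of Theorem \ref{theo:non_reg_os} with $\mu=-\lambda$, exploiting $p(0)=\Pi(T,Q)y_0+G(T,Q)q$ and linearity, whereas you use $\mu=0$ for $\Pi(T,Q)$ and $\mu=-\lambda$ for $G(T,Q)$; both are valid.
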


As a consequence of the last estimate, we obtain that $\Pi(T,Q) \underset{T \to \infty}{\longrightarrow} \Pi$ and that $\Pi(T,\Pi)= \Pi$. Let us mention that the third inequality has been obtained in \cite[Corollary 2.7]{PZ13} for finite-dimensional systems and that our result improves the one given in the same reference (see \cite[Lemma 3.9]{PZ13}), where a rate equal to $\lambda$ (instead of $2\lambda$) is established for parabolic systems.

\begin{proof}[Proof of Lemma \ref{lemma:decay_g}]
Applying Theorem \ref{theo:non_reg_os} with $\mu = -\lambda$, we obtain that
\begin{equation*}
\| e^{-\lambda \cdot} p(\cdot) \|_{L^\infty(0,T;Y)} \leq \| (y,u,p) \|_{\Lambda_{T,-\lambda}}
\leq M \max \big( \| y_0 \|_Y, e^{-\lambda T} \| q \|_Y \big)
\end{equation*}
and thus $\| p(0) \|_{Y} \leq M \max \big( \| y_0 \|_Y, e^{-\lambda T} \| q \|_Y \big)$. 
It follows that $\| \Pi(T,Q) \|_{\mathcal{L}(Y)} \leq M$ and that $\| G(T,Q) \|_{\mathcal{L}(Y)} \leq M e^{-\lambda T}$, as was to be proved.

Let us prove the last estimate. We take $q= 0$. Applying Theorem \ref{theo:non_reg_os} (with $\mu= \lambda$), we obtain that
$\| e^{\lambda \cdot} y(\cdot) \|_{L^\infty(0,T;Y)} \leq M \| y_0 \|_Y$.
Thus $\| y(T) \|_Y \leq Me^{-\lambda T} \| y_0 \|_Y$.
Let us set $r(t)= p(t)- \Pi y(t)$. We have $r(T)= (Q-\Pi)y(T)$, therefore
\begin{equation*}
\| r(T) \|_Y \leq M \| Q- \Pi \|_{\mathcal{L}(Y)} e^{-\lambda T} \| y_0 \|_Y.
\end{equation*}
Using the algebraic Riccati equation \eqref{eq:are} and the fact that $\Pi \in \mathcal{L}(V^*,Y) \cap \mathcal{L}(Y,V)$, one can check that $r \in W(0,T)$ and that $-\dot{r}= A_\pi^*r$. Since $A_\pi^* + \lambda I$ generates a bounded semigroup, we finally deduce that
\begin{equation*}
\| (\Pi(T,Q)-\Pi) y_0 \|_Y
= \| r(0) \|_Y
\leq M e^{-\lambda T} \| r(T) \|_Y
\leq M e^{-2\lambda T} \| Q - \Pi \|_{\mathcal{L}(Y)} \| y_0 \|_Y,
\end{equation*}
which concludes the proof.
\end{proof}

\begin{lemma} \label{lemma:diff_V}
Let $(\bar{y},\bar{u})$ be the solution to \eqref{prob:P_0} with associated costate $\bar{p}$. Let $(y,u) \in W(0,T) \times L^2(0,T;U)$ be such that $\dot{y}= Ay + Bu$. Then, there exists a constant $M$, independent of $T$, $Q$, $q$, $y_0$, $y$, and $u$ such that
\begin{align} 
0 \leq \ & J_{T,Q,q}^0(u,y)- \mathcal{V}_{T,Q,q}^0(y_0) - \langle \bar{p}(0), y(0)- y_0 \rangle_Y \notag \\
\leq \ & M \max \big( \| y - \bar{y} \|_{W(0,T)}^2, \| u - \bar{u} \|_{L^2(0,T;U)}^2 \big).
\label{eq:sensi}
\end{align}
\end{lemma}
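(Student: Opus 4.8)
The plan is to exploit the fact that $J^0_{T,Q,q}$ is an exactly quadratic functional, so a second-order Taylor expansion around the optimal pair $(\bar y,\bar u)$ is exact. Writing $\delta y = y - \bar y$ and $\delta u = u - \bar u$, the pair $(\delta y,\delta u)$ satisfies $\dot{\delta y} = A\delta y + B\delta u$ with $\delta y(0) = y(0) - y_0$, and by bilinearity of the quadratic terms one has the identity
\[
J^0_{T,Q,q}(u,y) = \mV^0_{T,Q,q}(y_0) + L(\delta u,\delta y) + R(\delta u,\delta y),
\]
where $\mV^0_{T,Q,q}(y_0) = J^0_{T,Q,q}(\bar u,\bar y)$, the first-order term is
\[
L(\delta u,\delta y) = \int_0^T \langle C\bar y, C\delta y\rangle_Z + \alpha\langle \bar u,\delta u\rangle_U \dd t + \langle Q\bar y(T)+q,\delta y(T)\rangle_Y,
\]
and the quadratic remainder is $R(\delta u,\delta y) = {\textstyle \frac12}\int_0^T \|C\delta y\|_Z^2 + \alpha\|\delta u\|_U^2\dd t + {\textstyle \frac12}\langle Q\delta y(T),\delta y(T)\rangle_Y$.

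The core step is to show that $L(\delta u,\delta y) = \langle \bar p(0),\delta y(0)\rangle_Y = \langle \bar p(0), y(0)-y_0\rangle_Y$. To this end I would use the adjoint equation $-\dot{\bar p} - A^*\bar p = C^*C\bar y$ to rewrite $\int_0^T \langle C^*C\bar y,\delta y\rangle\dd t$, integrate by parts in time, and insert the state equation $\dot{\delta y}=A\delta y + B\delta u$; the terms in $A^*\bar p$ and $A\delta y$ cancel, leaving the boundary contribution $\langle \bar p(0),\delta y(0)\rangle_Y - \langle \bar p(T),\delta y(T)\rangle_Y$ together with $\int_0^T \langle B^*\bar p,\delta u\rangle_U\dd t$. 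The gradient condition $\alpha\bar u + B^*\bar p = 0$ converts the latter integral into $-\alpha\int_0^T\langle \bar u,\delta u\rangle_U\dd t$, which cancels the control part of $L$, while the transversality condition $\bar p(T) = Q\bar y(T)+q$ cancels the terminal part, yielding exactly $\langle \bar p(0),\delta y(0)\rangle_Y$.

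Once this identity is established, the two inequalities follow from the sign and size of the remainder $R$. Since $\alpha>0$ and $Q$ is positive semi-definite, every term in $R$ is nonnegative, which gives the lower bound $0 \leq J^0_{T,Q,q}(u,y) - \mV^0_{T,Q,q}(y_0) - \langle \bar p(0),\delta y(0)\rangle_Y$. For the upper bound I would estimate each term of $R$ separately: $\int_0^T\|C\delta y\|_Z^2\dd t \leq \|C\|^2_{\mathcal L(Y,Z)}\|\delta y\|_{L^2(0,T;Y)}^2$, bounded by $\|\delta y\|_{W(0,T)}^2$ via the continuous embedding $V\hookrightarrow Y$; then $\alpha\int_0^T\|\delta u\|_U^2\dd t = \alpha\|\delta u\|_{L^2(0,T;U)}^2$; and $\langle Q\delta y(T),\delta y(T)\rangle_Y \leq \|Q\|_{\mathcal L(Y)}\|\delta y\|_{L^\infty(0,T;Y)}^2 \leq \|Q\|_{\mathcal L(Y)}\|\delta y\|_{W(0,T)}^2$. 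As $\mathcal Q$ is bounded, all these constants are uniform in $T$, $Q$, and $q$, which yields the claimed bound with $M$ independent of the data.

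The main technical obstacle is the rigorous justification of the integration by parts in the Gelfand-triple setting: since $\bar p,\delta y \in W(0,T)$, the derivatives $\dot{\bar p}, \dot{\delta y}$ live in $L^2(0,T;V^*)$ and every pairing above must be read as a $V^*$–$V$ duality. The identity $\frac{\dd}{\dd t}\langle \bar p,\delta y\rangle_Y = \langle \dot{\bar p},\delta y\rangle_{V^*,V} + \langle \bar p,\dot{\delta y}\rangle_{V^*,V}$ is the standard integration-by-parts formula for $W(0,T)$ and must be invoked to make the boundary terms meaningful, the continuity of $t\mapsto \langle \bar p(t),\delta y(t)\rangle_Y$ being guaranteed by the embedding $W(0,T)\hookrightarrow C([0,T];Y)$.
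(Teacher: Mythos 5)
Your proposal is correct and follows essentially the same route as the paper's proof: expand the exactly quadratic cost around $(\bar{y},\bar{u})$, identify the first-order term as $\langle \bar{p}(0), y(0)-y_0\rangle_Y$ via the adjoint equation, the gradient condition $\alpha\bar{u}+B^*\bar{p}=0$, the transversality condition and integration by parts, then use nonnegativity and boundedness of the quadratic remainder for the two inequalities. Your added remarks on the $V^*$--$V$ duality pairing and the $W(0,T)\hookrightarrow C([0,T];Y)$ embedding only make explicit what the paper leaves implicit.
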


\begin{proof}
We have
\begin{align}
& J_{T,Q,q}^0(u,y)- \mathcal{V}_{T,Q,q}^0(y_0)
 = J_{T,Q,q}^0(u,y) - J_{T,Q,q}^0(\bar{u},\bar{y}) \notag \\
& \qquad = \int_0^T \Big( {\textstyle \frac{1}{2} } \| C(y-\bar{y}) \|_Z^2 + {\textstyle \frac{\alpha}{2} } \|u-\bar{u}\|_U^2 +
 \langle C^*C \bar{y}, y-\bar{y} \rangle + \alpha \langle \bar{u},u-\bar{u} \rangle \Big) \dd t \notag \\
& \qquad \qquad + {\textstyle \frac{1}{2} } \langle y(T)-\bar{y}(T), Q(y(T)-\bar{y}(T)) \rangle_Y + \langle Q \bar{y}(T) + q, y(T)-\bar{y}(T) \rangle_Y. \label{eq:sensi_1}
\end{align}
The three quadratic terms can be bounded from above as follows:
\begin{align}
0 \leq & \ \int_0^T {\textstyle \frac{1}{2} } \| C(y-\bar{y}) \|_Z^2 + {\textstyle \frac{\alpha}{2} } \|u-\bar{u}\|_U^2 \dd t
+ {\textstyle \frac{1}{2} } \langle y(T)-\bar{y}(T), Q(y(T)-\bar{y}(T)) \rangle_Y \notag \\
\leq & \ M \max \big( \| y - \bar{y} \|_{W(0,T)}^2, \| u - \bar{u} \|_{L^2(0,T;U)}^2 \big).
\label{eq:sensi_2}
\end{align}
Let us focus on the remaining terms in the right-hand of \eqref{eq:sensi_1}. Using the relations $C^*C \bar{y} = -\dot{\bar{p}} - A^* \bar{p}$ and $\alpha \bar{u}= -B^* \bar{p}$ and integrating by parts, we obtain that
\begin{align}
& \int_0^T \langle C^*C \bar{y}, y-\bar{y} \rangle_Y + \alpha \langle \bar{u}, u-\bar{u}\rangle_U \dd t \notag \\
& \qquad = - \langle Q \bar{y}(T) + q, y(T)-\bar{y}(T) \rangle_Y
+ \langle \bar{p}(0), y(0)- y_0 \rangle_Y. \label{eq:sensi_3}
\end{align}
Estimate \eqref{eq:sensi} follows, by combining \eqref{eq:sensi_1}, \eqref{eq:sensi_2}, and \eqref{eq:sensi_3}.
\end{proof}

\begin{corollary} \label{coro:sensi_v_0}
The value function $\mathcal{V}_{T,Q,q}^0(\cdot)$ is differentiable. Moreover,
\begin{equation} \label{eq:sensi_v_0}
D_{y_{0}}  \mathcal{V}_{T,Q,q}^0 (y_0)
= \Pi(T,Q) y_0 + G(T,Q) q
\end{equation}
and $\Pi(T,Q)$ is self-adjoint and positive semi-definite.
\end{corollary}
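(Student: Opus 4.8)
The plan is to deduce differentiability directly from the second-order sensitivity estimate \eqref{eq:sensi} of Lemma \ref{lemma:diff_V}, combined with the $T$-uniform a priori bound of Theorem \ref{theo:non_reg_os} for $\mu = 0$. Fix $y_0 \in Y$ and let $(\bar{y},\bar{u},\bar{p})$ be the optimal pair of \eqref{prob:P_0} together with its costate; by \eqref{eq:def_pi_g} one has $\bar{p}(0) = \Pi(T,Q) y_0 + G(T,Q) q$. For a perturbation $\delta \in Y$, let $(y_\delta,u_\delta,p_\delta)$ denote the solution of \eqref{eq:non_reg_os} associated with the initial condition $y_0 + \delta$ (equivalently, the optimal pair and costate of \eqref{prob:P_0} for this initial condition), which exists and is unique by Lemma \ref{lemma:lq_pb}.

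The key point is a Lipschitz bound on the optimal pair with respect to the initial condition. The triple $(y_\delta - \bar{y},\, u_\delta - \bar{u},\, p_\delta - \bar{p})$ solves the linear optimality system \eqref{eq:non_reg_os} with data $(\delta,0,0,0,0)$, so Theorem \ref{theo:non_reg_os} with $\mu = 0$ yields
\[
\max \big( \| y_\delta - \bar{y} \|_{W(0,T)},\ \| u_\delta - \bar{u} \|_{L^2(0,T;U)} \big) \leq \| (y_\delta - \bar{y}, u_\delta - \bar{u}, p_\delta - \bar{p}) \|_{\Lambda_{T,0}} \leq M \| \delta \|_Y .
\]
Applying \eqref{eq:sensi} with $(y,u) = (y_\delta,u_\delta)$, and using $J_{T,Q,q}^0(u_\delta,y_\delta) = \mathcal{V}_{T,Q,q}^0(y_0 + \delta)$ together with $y_\delta(0) - y_0 = \delta$, we obtain
\[
0 \leq \mathcal{V}_{T,Q,q}^0(y_0 + \delta) - \mathcal{V}_{T,Q,q}^0(y_0) - \langle \bar{p}(0), \delta \rangle_Y \leq M \| \delta \|_Y^2 .
\]
This is exactly the statement that $\mathcal{V}_{T,Q,q}^0$ is Fréchet differentiable at $y_0$ with derivative $\bar{p}(0)$, and substituting \eqref{eq:def_pi_g} gives \eqref{eq:sensi_v_0}.

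It remains to show that $\Pi(T,Q)$ is self-adjoint and positive semi-definite, for which I take $q = 0$. Since the data map $y_0 \mapsto (y_0,0,0,0,0)$ is linear and the solution of \eqref{eq:non_reg_os} depends linearly and boundedly on the data (Theorem \ref{theo:non_reg_os}), the map $y_0 \mapsto (\bar{y},\bar{u})$ is linear and bounded; hence $\mathcal{V}_{T,Q,0}^0(y_0) = J_{T,Q,0}^0(\bar{u},\bar{y})$ is a continuous quadratic form of $y_0$, which is moreover nonnegative since every term of $J_{T,Q,0}^0$ is nonnegative (recall $Q$ is positive semi-definite). Writing $\mathcal{V}_{T,Q,0}^0(y_0) = \frac{1}{2} a(y_0,y_0)$ with $a$ its associated symmetric continuous bilinear form, the derivative computed above reads $a(y_0,h) = \langle \Pi(T,Q) y_0, h \rangle_Y$ for all $h \in Y$. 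Symmetry of $a$ then forces $\langle \Pi(T,Q) y_0, h \rangle_Y = \langle y_0, \Pi(T,Q) h \rangle_Y$, so $\Pi(T,Q)$ is self-adjoint, while $\langle \Pi(T,Q) y_0, y_0 \rangle_Y = 2 \mathcal{V}_{T,Q,0}^0(y_0) \geq 0$ gives positive semi-definiteness.

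The only genuinely non-routine point is the uniform Lipschitz dependence of the optimal pair on the initial condition, which is precisely where Theorem \ref{theo:non_reg_os} does the work; everything else is bookkeeping around the quadratic-form structure. One should simply take care that the remainder bound is $O(\| \delta \|_Y^2)$, so that Fréchet (and not merely Gâteaux) differentiability is obtained.
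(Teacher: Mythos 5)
Your proposal is correct and follows essentially the same route as the paper's proof: the Lipschitz bound on the optimal triple via Theorem \ref{theo:non_reg_os} with $\mu=0$, the sandwich inequality from Lemma \ref{lemma:diff_V} to get Fr\'echet differentiability with derivative $\bar{p}(0)$, identification via \eqref{eq:def_pi_g}, and the case $q=0$ with the quadratic-form structure of $\mathcal{V}^0_{T,Q,0}$ to obtain self-adjointness and positive semi-definiteness of $\Pi(T,Q)$. The only difference is cosmetic: you spell out the polarization/symmetry argument that the paper compresses into the assertion that a convex quadratic value function is represented by a self-adjoint positive semi-definite operator.
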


\begin{proof}
Take $y_0 \in Y$ and $h \in Y$. Denote by $(\bar{y},\bar{u},\bar{p})$ and $(y,u,p)$ the solutions to \eqref{eq:optim_sys_zero} with initial conditions $y_0$ and $y_0 + h$, respectively. Then, by Theorem \ref{theo:non_reg_os},
\begin{equation*}
\max \big( \| y - \bar{y} \|_{W(0,T)}, \| u - \bar{u} \|_{L^2(0,T;U)} \big)
\leq \| (y,u,p) - (\bar{y},\bar{u},\bar{p}) \|_{\Upsilon_{T,0}}
\leq M \| h \|_Y.
\end{equation*}
Applying Lemma \ref{lemma:diff_V}, we deduce that
\begin{equation*}
0 \leq \mathcal{V}_{T,Q,q}^0(y_0 +h)- \mathcal{V}_{T,Q,q}^0(y_0) - \langle \bar{p}(0), h \rangle_Y \leq M \| h \|_Y^2,
\end{equation*}
which proves that $\mathcal{V}_{T,Q,q}^0$ is differentiable with $D_{y_0} \mathcal{V}_{T,Q,q}^0(y_0)= \bar{p}(0)$. Then \eqref{eq:sensi_v_0} follows with \eqref{eq:def_pi_g}.

Let us take now $q= 0$. Then, the solution $(y,u,p)$ to \eqref{eq:optim_sys_zero} is a linear mapping of $y_0$. Since $J_{T,Q,0}^0(u,y)$ is quadratic and convex, there exists a self-adjoint and positive semi-definite operator $\hat{\Pi}(T)$ such that $\mathcal{V}_{T,Q,0}^0(y_0)= \frac{1}{2} \langle y_0, \hat{\Pi}(T) y_0 \rangle$. Applying the first part of the lemma, we deduce that for all $y_0 \in Y$,
$D_{y_0} \mathcal{V}_{T,Q,0}^0 (y_0) = \hat{\Pi}(T) y_0 = \Pi(T,Q) y_0$,
which proves that $\hat{\Pi}(T)= \Pi(T,Q)$ and concludes the proof.
\end{proof}

\section{Linear-quadratic problems} \label{section:lq_pb}

\subsection{Turnpike property}

We analyze now the class of problems \eqref{prob:turnpike} (defined in the introduction).
By Lemma \ref{lemma:lq_pb}, \eqref{prob:turnpike} has a unique solution $(\bar{y},\bar{u})$ with associated costate $\bar{p}$, satisfying
\begin{equation} \label{eq:oc_for_lq_pb}
\bar{y}(0) = y_0, \quad
\mathcal{H}(y,u,p)= (f^\diamond,g^\diamond,-h^\diamond), \quad
\bar{p}(\bar{T}) - Q \bar{y}(\bar{T})= q.
\end{equation}
Note that the variables $f^\diamond$, $g^\diamond$, and $h^\diamond$ must be understood as constant time-functions in the above optimality system.
Let us first investigate the existence of a solution to the static optimization problem.

\begin{lemma} \label{lemma:existence_turnpike}
The static optimization problem \eqref{eq:turnpike_pb_setub}
has a unique solution $(y^\diamond,u^\diamond)$ with unique associated Lagrange multiplier $p^\diamond \in V$, i.e.\@ $p^\diamond$ is such that
\begin{equation} \label{eq:turnpike_oc}
-(Ay^\diamond + Bu^\diamond) = f^\diamond, \quad
- A^* p^\diamond - C^* C y^\diamond = g^\diamond, \quad
\alpha u^\diamond + B^* p^\diamond  = - h^\diamond.
\end{equation}
Moreover, there exists a constant $M > 0$, independent of $(f^\diamond,g^\diamond,h^\diamond )$, such that
\begin{equation} \label{eq:static_prob_turnpike}
\max \big( \| y^\diamond \|_V, \|u^\diamond\|_U,  \| p^\diamond \|_V \big) \leq M \max \big( \| f^\diamond \|_{V^*}, \| g^\diamond \|_{V^*}, \|h^\diamond \|_U \big).
\end{equation}
\end{lemma}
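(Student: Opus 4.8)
The plan is to treat the static problem \eqref{eq:turnpike_pb_setub} as a convex program over $V \times U$ and to establish its three claims in order: existence and uniqueness of the minimizer, existence and uniqueness of the multiplier $p^\diamond \in V$, and the a priori bound \eqref{eq:static_prob_turnpike}. For the minimizer I would run the direct method, closely mirroring the proof of Lemma \ref{lemma:lq_pb}. The feasible set is nonempty: by Assumption (A1) the form $a$ makes $-(A+BF)$ coercive from $V$ into $V^*$, where $F$ from (A2) renders $A+BF$ exponentially stable, so $(A+BF)y = -f^\diamond$ has a unique solution $y \in V$ and $(y,Fy)$ is feasible. The quadratic part of $\ell$ controls $\|u\|_U$ and $\|Cy\|_Z$, and the linear terms are absorbed by Young's inequality; to control $\|y\|_V$ I would use a \emph{static} analogue of Lemma \ref{lemma:detectability}: from $Ay = -Bu - f^\diamond$ and the operator $K$ of (A3), one writes $(A-KC)y = -Bu - f^\diamond - KCy$, and since $A-KC$ is an isomorphism $V \to V^*$ this gives $\|y\|_V \leq M(\|u\|_U + \|f^\diamond\|_{V^*} + \|Cy\|_Z)$. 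The usual $\varepsilon$-absorption then bounds a minimizing sequence in $V \times U$; weak lower semicontinuity of the convex cost and weak closedness of the affine constraint deliver a minimizer, and uniqueness follows from strict convexity of the quadratic part in $u$ and in $Cy$ combined with this static estimate applied to the difference of two minimizers.

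For the optimality system \eqref{eq:turnpike_oc} I would invoke the Lagrange multiplier theorem for the equality constraint $G(y,u) = Ay + Bu + f^\diamond = 0$, viewed as a map $V \times U \to V^*$. Its linear part is onto (again by (A2): any $w \in V^*$ is reached by solving $(A+BF)y = w$), which is the required regularity condition; since $V$ is a reflexive Hilbert space the multiplier lives in $(V^*)^* = V$, giving $p^\diamond \in V$. Stationarity of the Lagrangian $\ell(y,u) + \langle p^\diamond, Ay+Bu+f^\diamond \rangle_{V,V^*}$ in $y$, in $u$, and in $p$ reproduces exactly the three equations of \eqref{eq:turnpike_oc}. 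Uniqueness of $p^\diamond$ is immediate: a difference of two multipliers would satisfy $A^*\delta p = 0$ and $B^*\delta p = 0$, hence $(A+BF)^*\delta p = 0$, and $(A+BF)^*$ is an isomorphism $V \to V^*$.

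For the bound \eqref{eq:static_prob_turnpike} I would decouple \eqref{eq:turnpike_oc} with the substitution $r^\diamond = p^\diamond - \Pi y^\diamond$, exactly as in Lemma \ref{lemma:linSysFiniteHorizon}. Eliminating $u^\diamond = -\tfrac{1}{\alpha}(B^*p^\diamond + h^\diamond)$ in the state equation gives $A_\pi y^\diamond = \tfrac{1}{\alpha}BB^* r^\diamond + \tfrac{1}{\alpha}Bh^\diamond - f^\diamond$, while substituting the Riccati identity \eqref{eq:are} into the adjoint equation makes the $\tfrac{1}{\alpha}\Pi BB^* r^\diamond$ contributions cancel and yields the decoupled relation $-A_\pi^* r^\diamond = \Pi f^\diamond - \tfrac{1}{\alpha}\Pi B h^\diamond + g^\diamond$. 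Since $A_\pi$ and $A_\pi^*$ generate exponentially stable semigroups, both are isomorphisms from $V$ onto $V^*$, so one first bounds $\|r^\diamond\|_V \leq M\max(\|f^\diamond\|_{V^*},\|g^\diamond\|_{V^*},\|h^\diamond\|_U)$ (using $\Pi \in \mathcal{L}(V^*,Y)$), then $\|y^\diamond\|_V$ by the same token, and finally $\|p^\diamond\|_V = \|\Pi y^\diamond + r^\diamond\|_V$ and $\|u^\diamond\|_U$; the constant $M$ depends only on the fixed operators $\Pi$, $A_\pi$, $B$ and on $\alpha$, hence is independent of the data.

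I expect the main obstacle to be purely functional-analytic rather than computational: one must guarantee that the forward, adjoint, and decoupled static operators are genuine isomorphisms $V \to V^*$ and that $y^\diamond$, $r^\diamond$, $p^\diamond$ actually belong to $V$ (and not merely to $Y$), which is exactly the $V$-regularity of $p^\diamond$ asserted in the statement. This is where Assumption (A4), namely $[\mathcal{D}(-A_0),Y]_{1/2} = [\mathcal{D}(-A_0^*),Y]_{1/2} = V$, becomes indispensable: it identifies the energy spaces of both $A$ and $A^*$ with $V$, so that the coercivity furnished by (A1) transfers symmetrically to $A^*$ and to $A_\pi^*$, legitimising the Lax--Milgram and isomorphism arguments on which both the multiplier construction and the decoupled estimates rest.
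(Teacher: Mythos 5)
Your proposal is correct, but it takes a genuinely different route from the paper. The paper's proof is purely constructive: quoting \cite[p.~207, eq.~2.7]{Benetal07} for the fact that $A_\pi$ (hence also $A_\pi^*$) is an isomorphism from $V$ onto $V^*$, it \emph{defines} $r^\diamond = -A_\pi^{-*}\big(\Pi f^\diamond - \frac{1}{\alpha}\Pi B h^\diamond + g^\diamond\big)$, then $y^\diamond$, $p^\diamond = \Pi y^\diamond + r^\diamond$ and $u^\diamond$ exactly as in your last step, checks that this triplet satisfies \eqref{eq:turnpike_oc} and \eqref{eq:static_prob_turnpike}, obtains optimality of $(y^\diamond,u^\diamond)$ from convexity (any solution of \eqref{eq:turnpike_oc} solves \eqref{eq:turnpike_pb_setub}), and proves uniqueness directly on the system \eqref{eq:turnpike_oc} by the same decoupling ($r = p - \Pi y$ with homogeneous data solves $A_\pi^* r = 0$, hence $r = 0$, then $y = 0$, $p = 0$, $u = 0$). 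You instead establish existence of a minimizer by the direct method (with a static analogue of Lemma \ref{lemma:detectability}), existence and uniqueness of the multiplier by a surjectivity-based Lagrange multiplier theorem, and invoke the decoupling only for the a priori bound. What your route buys: the multiplier rule is obtained as a \emph{necessary} condition (the paper's assertion that uniqueness for \eqref{eq:turnpike_oc} implies uniqueness for \eqref{eq:turnpike_pb_setub} tacitly relies on such necessity), and the roles of (A2) and (A3) are made explicit. What it costs: you need three additional isomorphism facts, namely that $A+BF$, $(A+BF)^*$ and $A-KC$ map $V$ onto $V^*$ isomorphically, and these do not follow as immediately as you suggest; in particular, (A1) gives only the G\aa rding inequality $a(v,v) \geq \lambda_0 \|v\|_V^2 - \lambda \|v\|_Y^2$, not coercivity, so your phrase that (A1) ``makes $-(A+BF)$ coercive'' is not literally accurate. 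The correct argument combines invertibility of the generator on $Y$ (from exponential stability) with Lax--Milgram for a shifted perturbed form; this is provable with the same kind of care the paper delegates to its citation for $A_\pi$, so the discrepancy is an added technical burden rather than a gap.
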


\begin{proof}
Since by \cite[page 207, equation 2.7]{Benetal07} (with $\alpha = \frac{1}{2}$) the operator $A_{\pi}$ is an isomorphism from $V$ to $V^*$, we can define $r^\diamond = -A_\pi^{-*}(\Pi f^\diamond -\frac{1}{\alpha} \Pi Bh^\diamond + g^\diamond) \in V$.
Similarly to the proof of Lemma \ref{lemma:linSysFiniteHorizon} we next define $y^{\diamond} = A_\pi^{-1} \big( \frac{1}{\alpha} Bh^{\diamond} - f^\diamond +\frac{1}{\alpha} BB^* r^{\diamond} \big) \in  V$, $p^{\diamond} = \Pi y^\diamond + r^\diamond \in V$, and $u^{\diamond} = -\frac{1}{\alpha} (h^\diamond + B^* p^\diamond) \in U$.
It is easily verified that the triplet $(y^{\diamond},p^{\diamond},u^{\diamond})$ is a solution to  \eqref{eq:turnpike_oc} and that it satisfies  \eqref{eq:static_prob_turnpike}.

It remains to discuss the uniqueness of the solution to \eqref{eq:turnpike_pb_setub} and the uniqueness of the solution to \eqref{eq:turnpike_oc}. Let us first remark that if $(y,u,p)$ is solution to \eqref{eq:turnpike_oc}, then $(y,u)$ is solution to \eqref{eq:turnpike_pb_setub} with associated Lagrange multiplier $p$, by convexity of the optimization problem. Therefore, the uniqueness of the solution to \eqref{eq:turnpike_oc} implies the uniqueness of the solution to \eqref{eq:turnpike_pb_setub}.

To prove the uniqueness of the solution to \eqref{eq:turnpike_oc}, it suffices to consider the case $(f^\diamond,g^\diamond,h^\diamond)= (0,0,0)$. Let $(y,u,p)$ be a solution to \eqref{eq:turnpike_oc} with $(f^\diamond,g^\diamond,h^\diamond)=(0,0,0)$. Let us define $r=p-\Pi y$. It then follows that $A_\pi^*r=0$ and, hence, $r=0$. Consequently, we have $\Pi y=p$ and with $Ay=-Bu$ we conclude that $A_\pi y =0$. This implies $y=0$ and $p=\Pi y =0$. Since $\alpha u+ B^*p =0,$ we finally obtain that $u = 0$, which concludes the proof the lemma.
\end{proof}

From now on, we denote
\begin{equation} \label{eq:def_tilde_q}
\tilde{q}= q - p^\diamond + Qy^\diamond.
\end{equation}
We state and prove in Theorem \ref{theorem:turnpike} the turnpike property announced in the introduction. A consequence of inequality \eqref{eq:turnpike_estimate} below is that if $t$ is not too close to $0$ and not too close to $\bar{T}$, then $\bar{y}(t)$ and $\bar{p}(t)$ are close to $y^\diamond$ and $p^\diamond$, respectively.

\begin{theorem} \label{theorem:turnpike}
There exists a constant $M$, independent of the parameters $\bar{T}$, $Q$, and $(y_0,f^\diamond,g^\diamond,h^\diamond,q)$ such that for all $t \in [0,\bar{T}]$,
\begin{equation}
\max \big( \| \bar{y}(t) - y^\diamond \|_Y, \| \bar{p}(t) - p^\diamond \|_Y \big) \leq M \big( e^{-\lambda t} \| y_0 - y^\diamond \|_Y + e^{-\lambda(\bar{T}-t)} \| \tilde{q} \|_Y \big). \label{eq:turnpike_estimate}
\end{equation}
\end{theorem}

\begin{remark}
The exponential turnpike property established in \cite{PZ13,TZ15} takes the following form: $\max \big( \| \bar{y}(t) - y^\diamond \|_Y, \| \bar{p}(t) - p^\diamond \|_Y \big) \leq M_1  e^{-\lambda t} + M_2 e^{-\lambda(\bar{T}-t)}$, where the constants $M_1$ and $M_2$ depend on all the data of the problem (except $\bar{T}$). Our estimate is thus more precise: It shows that these two constants are related to $\| y_0 - y^\diamond \|_Y$ and $\| q - p^\diamond + Qy^\diamond \|_Y$, respectively. 
\end{remark}

\begin{proof}[Proof of Theorem \ref{theorem:turnpike}]
Let $(\tilde{y},\tilde{u},\tilde{p})= (\bar{y},\bar{u},\bar{p})-(y^\diamond,u^\diamond,p^{\diamond})$.
We have
\begin{equation*}
\tilde{p}(\bar{T})-Q \tilde{y}(\bar{T})
= p(\bar{T}) - p^\diamond - Q(y(\bar{T})- y^\diamond)
= q + Q y^\diamond - p^\diamond = \tilde{q}.
\end{equation*}
Then, by \eqref{eq:oc_for_lq_pb} and \eqref{eq:turnpike_oc}, $\tilde{y}(0) = y_0 - y^\diamond$, $\mathcal{H}(y,u,p)=(0,0,0)$, $\tilde{p}(\bar{T}) - Q \tilde{y}(\bar{T})= \tilde{q}$,
i.e.\@ $(\tilde{y},\tilde{u},\tilde{p})$ is the solution to \eqref{eq:optim_sys_zero}, with parameters $(y_0 - y^\diamond, \bar{T},Q, \tilde{q})$.
Let $(y^{(1)},u^{(1)},p^{(1)})$ and $(y^{(2)},u^{(2)},p^{(2)})$ be the solutions to \eqref{eq:optim_sys_zero}, with parameters $(y_0 - y^\diamond, \bar{T},Q, 0)$ and $(0,\bar{T},Q, \tilde{q})$ respectively.
Applying Theorem \ref{theo:non_reg_os} to these systems with $\mu= \lambda$ and $\mu= -\lambda$ respectively, we obtain that
\begin{align*}
\| (y^{(1)},u^{(1)},p^{(1)}) \|_{\Lambda_{T,\lambda}} \leq \ & M \| y_0-y^\diamond \|_Y, \\
\| (y^{(2)},u^{(2)},p^{(2)}) \|_{\Lambda_{T,-\lambda}} \leq \ & M e^{-\lambda \bar{T}} \| \tilde{q} \|_Y
\end{align*}
We immediately deduce that for all $t \in [0,\bar{T}]$
\begin{align*}
\max \big( \| y^{(1)}(t) \|_Y, \| p^{(1)}(t) \|_Y \big)
\leq \ & M e^{-\lambda t} \| y_0-y^\diamond \|_Y, \\
\max \big( \| y^{(2)}(t) \|_Y, \| p^{(2)}(t) \|_Y \big)
\leq \ & M e^{-\lambda (\bar{T} - t)} \| \tilde{q} \|_Y.
\end{align*}
Estimate \ref{eq:turnpike_estimate} follows, since by linearity, $(\tilde{y},\tilde{u},\tilde{p})= (y^{(1)},u^{(1)},p^{(1)}) + (y^{(2)},u^{(2)},p^{(2)})$.
\end{proof}

\begin{remark}
If one assumes that $B \in \mathcal{L}(U,Y)$ (instead of simply $B \in \mathcal{L}(U, V^*)$), then a turnpike property can also be established for the control:
\begin{align*}
\|u(t)- u^\diamond\|_U
= {\textstyle \frac{1}{\alpha} } \big\| B^*(p(t)-p^\diamond) \big\|_U 
\leq M \big( e^{-\lambda t} \| y_0 - y^\diamond \|_Y + e^{-\lambda(\bar{T}-t)} \| \tilde{q} \|_Y \big).
\end{align*}
\end{remark}

\subsection{Analysis of the value function}

In this subsection, we analyze some properties of the value function associated with Problem \eqref{prob:turnpike}. For an initial time $\theta$ and an initial condition $y_\theta$, the value function is defined by
\begin{equation*} \label{prob:dyn_prog} \tag{$P(\theta)$}
\mathcal{V}_{\bar{T},Q,q}(\theta,y_\theta)=
\left\{
\begin{array}{l}
{\displaystyle
\inf_{\begin{subarray}{c} y \in W(\theta,\bar{T}) \\ u \in L^2(\theta,\bar{T};U) \end{subarray}}
 \int_{\theta}^{\bar{T}} \ell(y(t),u(t)) \dd t + {\textstyle \frac{1}{2} } \langle y(\bar{T}), Q y(\bar{T}) \rangle_Y + \langle q, y(\bar{T}) \rangle_Y, } \\[1em]
\text{subject to: } \dot{y}(t)= Ay(t) + Bu(t) + f^\diamond, \quad y(\theta)= y_\theta.
\end{array}
\right.
\end{equation*}
The shifting realized in the proof of Theorem \ref{theorem:turnpike} shows that Problem \eqref{prob:turnpike} is equivalent to a problem of the same form as \eqref{prob:P_0} (with a different value of $q$). We compare the corresponding value functions in the next lemma.

\begin{lemma} \label{lemma:link_value_function}
The following relation holds true:
\begin{align}
\mathcal{V}_{\bar{T},Q,q}(\theta,y_\theta)
= \ & \mathcal{V}^0_{\bar{T}-\theta,Q,\tilde{q}}(y_\theta - y^\diamond)
+ \langle p^\diamond, y_\theta \rangle_Y \notag \\
& \qquad + (\bar{T} - \theta) v^\diamond
+ {\textstyle \frac{1}{2} } \langle y^\diamond, Q y^\diamond \rangle_Y
+ \langle q- p^\diamond, y^\diamond \rangle,
\label{eq:link_V_V}
\end{align}
where $v^\diamond := \ell(y^\diamond,u^\diamond)$ is the value of the static optimization problem \eqref{eq:turnpike_pb_setub}.
\end{lemma}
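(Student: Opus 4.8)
The plan is to reduce the problem \eqref{prob:dyn_prog} to an instance of \eqref{prob:P_0} by shifting the state and control around the steady state. Given a feasible pair $(y,u)$ for \eqref{prob:dyn_prog} on $[\theta,\bar{T}]$, I would set $\tilde{y}(s) = y(\theta + s) - y^\diamond$ and $\tilde{u}(s) = u(\theta + s) - u^\diamond$ for $s \in [0,\bar{T}-\theta]$. Since $Ay^\diamond + Bu^\diamond + f^\diamond = 0$ by \eqref{eq:turnpike_oc}, the constant $f^\diamond$ cancels and $\tilde{y}$ solves the homogeneous dynamics $\dot{\tilde{y}} = A\tilde{y} + B\tilde{u}$ with $\tilde{y}(0) = y_\theta - y^\diamond$. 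This map is an affine bijection between the feasible set of \eqref{prob:dyn_prog} (with initial condition $y_\theta$) and that of \eqref{prob:P_0} on $[0,\bar{T}-\theta]$ (with initial condition $y_\theta - y^\diamond$), so it suffices to show that the cost of $(y,u)$ equals $J^0_{\bar{T}-\theta,Q,\tilde{q}}(\tilde{u},\tilde{y})$ plus the asserted control-independent constant, and then to take the infimum on both sides.

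Next I would expand the running cost $\ell(\tilde{y}+y^\diamond,\tilde{u}+u^\diamond)$. The purely quadratic terms $\frac12\|C\tilde{y}\|_Z^2 + \frac{\alpha}{2}\|\tilde{u}\|_U^2$ are exactly the integrand of $J^0$; the constant term is $\ell(y^\diamond,u^\diamond)=v^\diamond$; and the remaining cross/linear terms are $\langle C^*Cy^\diamond + g^\diamond, \tilde{y}\rangle + \langle \alpha u^\diamond + h^\diamond, \tilde{u}\rangle$. Here the static optimality conditions \eqref{eq:turnpike_oc} are the crux: they give $C^*Cy^\diamond + g^\diamond = -A^*p^\diamond$ and $\alpha u^\diamond + h^\diamond = -B^*p^\diamond$, so these linear terms collapse to $-\langle p^\diamond, A\tilde{y} + B\tilde{u}\rangle = -\langle p^\diamond, \dot{\tilde{y}}\rangle$ in the $V$--$V^*$ pairing. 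Integrating over $[0,\bar{T}-\theta]$ and using that $p^\diamond$ is constant in time yields the boundary contribution $\langle p^\diamond, \tilde{y}(0)\rangle_Y - \langle p^\diamond, \tilde{y}(\bar{T}-\theta)\rangle_Y$.

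For the terminal cost I would expand $\frac12\langle y(\bar{T}),Qy(\bar{T})\rangle + \langle q, y(\bar{T})\rangle$ with $y(\bar{T}) = \tilde{y}(\bar{T}-\theta)+y^\diamond$ and $Q$ self-adjoint. The quadratic term $\frac12\langle \tilde{y}(\bar{T}-\theta), Q\tilde{y}(\bar{T}-\theta)\rangle$ joins $J^0$; the linear term $\langle Qy^\diamond + q, \tilde{y}(\bar{T}-\theta)\rangle$ combines with the boundary term $-\langle p^\diamond, \tilde{y}(\bar{T}-\theta)\rangle$ from the previous step into $\langle Qy^\diamond + q - p^\diamond, \tilde{y}(\bar{T}-\theta)\rangle = \langle \tilde{q}, \tilde{y}(\bar{T}-\theta)\rangle$, precisely the terminal linear term of $J^0_{\bar{T}-\theta,Q,\tilde{q}}$, by the definition \eqref{eq:def_tilde_q} of $\tilde{q}$. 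Collecting the remaining control-independent terms --- namely $(\bar{T}-\theta)v^\diamond$, the boundary term $\langle p^\diamond, \tilde{y}(0)\rangle_Y = \langle p^\diamond, y_\theta - y^\diamond\rangle_Y$, and the constants $\frac12\langle y^\diamond, Qy^\diamond\rangle + \langle q, y^\diamond\rangle$ from the two expansions --- and rearranging $\langle p^\diamond, y_\theta - y^\diamond\rangle + \langle q, y^\diamond\rangle = \langle p^\diamond, y_\theta\rangle + \langle q-p^\diamond, y^\diamond\rangle$ reproduces exactly the additive constant in \eqref{eq:link_V_V}. Taking the infimum over feasible controls then gives the claim.

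The only genuinely delicate point is the integration by parts $\int_0^{\bar{T}-\theta} \langle p^\diamond, \dot{\tilde{y}}\rangle_{V^*,V} \dd s = \langle p^\diamond, \tilde{y}(\bar{T}-\theta)\rangle_Y - \langle p^\diamond, \tilde{y}(0)\rangle_Y$: this is justified because $p^\diamond \in V$ is time-independent and $\tilde{y} \in W(0,\bar{T}-\theta) \hookrightarrow C([0,\bar{T}-\theta],Y)$, so $s \mapsto \langle p^\diamond, \tilde{y}(s)\rangle_Y$ is absolutely continuous with derivative $\langle p^\diamond, \dot{\tilde{y}}(s)\rangle_{V^*,V}$, by the standard integration-by-parts formula in $W(0,T)$. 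Everything else is bookkeeping of the Gelfand-triple pairings, and the identity in fact holds term by term on feasible pairs before one passes to the infimum.
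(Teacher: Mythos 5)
Your proof is correct and follows essentially the same route as the paper's: shift by the steady state $(y^\diamond,u^\diamond)$, use the static optimality conditions \eqref{eq:turnpike_oc} together with an integration by parts to collapse the linear cross terms into boundary terms, absorb those into the terminal cost via the definition \eqref{eq:def_tilde_q} of $\tilde{q}$, collect the constants, and take infima. The only differences are cosmetic: you handle general $\theta$ by an explicit time shift where the paper reduces to $\theta=0$, and you make explicit the feasible-set bijection and the $W(0,T)$ integration-by-parts justification that the paper leaves implicit.
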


\begin{proof}
It is sufficient to prove the result for $\theta= 0$.
Let $(y,u)$ be such that $\dot{y}= Ay + Bu + f^\diamond$, $y(0)= y_0$.
Let $(\tilde{y},\tilde{u})= (y,u)-(y^\diamond,u^\diamond)$. Then, $\dot{\tilde{y}}= A \tilde{y} + B \tilde{u}$, $\tilde{y}(0)= y_0 - y^\diamond$.
We have
\begin{align}
J_{\bar{T},Q,q}(u,y)= & \ \int_{0}^{\bar{T}}
{\textstyle \frac{1}{2} } \| C \tilde{y} \|_Z^2 + \langle C^*C y^\diamond + g^\diamond, \tilde{y} \rangle_{V^*,V} + \Big( {\textstyle \frac{1}{2} } \| C y^\diamond \|_Z^2 + \langle g^\diamond, y^\diamond \rangle_Y \Big) \dd t \notag \\
& \qquad + \int_{0}^{\bar{T}} {\textstyle \frac{\alpha}{2} } \| \tilde{u}(t) \|_U^2 + \langle \alpha u^\diamond + h^\diamond, \tilde{u}(t)\rangle_U + \Big( {\textstyle \frac{\alpha}{2} } \|u^\diamond\|_U^2 + \langle h^\diamond, u^\diamond \rangle_U \Big) \dd t \notag \\
& \qquad + {\textstyle \frac{1}{2} } \langle \tilde{y}(T), Q \tilde{y}(T) \rangle_Y
+ \langle Q y^\diamond + q, \tilde{y}(T) \rangle_Y
+ {\textstyle \frac{1}{2} } \langle y^\diamond, Q y^\diamond \rangle_Y + \langle q, y^\diamond \rangle_Y.
\label{eq:equi_turn_1}
\end{align}
As in the proof of Lemma \ref{lemma:diff_V}, the linear terms vanish. Using $C^*C y^\diamond + g^\diamond = - A^* p^\diamond$, $\alpha u^\diamond + h^\diamond = -B^* p^\diamond$, and integrating by parts, one indeed obtains that
\begin{equation}
\int_{0}^{\bar{T}}
\langle C^*C y^\diamond + g^\diamond, \tilde{y}(t) \rangle_{V^*,V}
+
\langle \alpha u^\diamond + h^\diamond, \tilde{u}(t)\rangle_U \dd t
=
-\langle p^\diamond, \tilde{y}(\bar{T})- \tilde{y}(0) \rangle_Y. \label{eq:equi_turn_2}
\end{equation}
Combining \eqref{eq:equi_turn_1} and \eqref{eq:equi_turn_2}, we obtain that
\begin{align*}
J_{\bar{T},Q,q}(u,y)= & \ \int_{0}^{\bar{T}}
{\textstyle \frac{1}{2} } \| C \tilde{y} \|_Z^2 + {\textstyle \frac{\alpha}{2} } \| \tilde{u}(t)\|_U^2  \dd t \\
& \qquad + {\textstyle \frac{1}{2} } \langle \tilde{y}(\bar{T}), Q \tilde{y}(\bar{T}) \rangle_Y
+ \langle Q y^\diamond + q - p^\diamond, \tilde{y}(\bar{T}) \rangle_Y + K(y_0),
\end{align*}
where $K(y_0) = \bar{T} v^\diamond + {\textstyle \frac{1}{2} } \langle y^\diamond, Q y^\diamond \rangle_Y
+ \langle p^\diamond, y_0 - y^\diamond \rangle_Y
+ \langle q, y^\diamond \rangle_Y$.
We obtain with the definitions of $J_0$ and $\tilde{q}$ given in \eqref{eq:J_0} and \eqref{eq:def_tilde_q} that
$J_{\bar{T},Q,q}(u,y)= J_{\bar{T},Q,\tilde{q}}^0(\tilde{u},\tilde{y}) + K(y_0)$.
Therefore
$\mathcal{V}_{\bar{T},Q,q}(0,y_0)
= \mathcal{V}_{\bar{T}, Q, \tilde{q}}^0(y_0-y^\diamond) + K(y_0)$
and the lemma is proved.
\end{proof}

We deduce from Lemma \ref{lemma:link_value_function} some useful information on $D_{y_{\theta}} \mathcal{V}_{\bar{T},Q,q}(\theta,y_{\theta})$. More precisely, relation \eqref{coro:sensi_for_real_pb} below shows how the derivative of the value function deviates from the equilibrium value $p^\diamond$. Note that the first difference term, $\Pi(\bar{T} - \theta,Q) (y_{\theta}-y^\diamond)$, vanishes when $y_{\theta}= y^{\diamond}$ and the second one, $G(\bar{T} - \theta,Q) \tilde{q}$, is very small for large values of $\bar{T}-\theta$.

\begin{corollary} \label{coro:sensi_for_real_pb}
The following relation holds true:
\begin{equation} \label{eq:sensi_01}
D_{y_{\theta}} \mathcal{V}_{\bar{T},Q,q}(\theta,y_{\theta})
= \Pi(\bar{T} - \theta,Q) (y_{\theta}-y^\diamond) + G(\bar{T} - \theta,Q) \tilde{q} + p^\diamond.
\end{equation}
Moreover, for all $\theta \in [0,\bar{T}]$,
\begin{equation} \label{eq:sensi_02}
\bar{p}(\theta)= \Pi(\bar{T} - \theta,Q) (\bar{y}(\theta)-y^\diamond) + G(\bar{T} - \theta,Q) \tilde{q} + p^\diamond.
\end{equation}
\end{corollary}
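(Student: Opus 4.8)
The plan is to obtain \eqref{eq:sensi_01} by differentiating the identity of Lemma \ref{lemma:link_value_function} term by term, and then to obtain \eqref{eq:sensi_02} by shifting and restricting the optimality system \eqref{eq:oc_for_lq_pb}.

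First I would fix $\theta$ and read the right-hand side of \eqref{eq:link_V_V} as a function of $y_\theta$. The three trailing terms $(\bar T-\theta)v^\diamond$, $\frac12\langle y^\diamond,Qy^\diamond\rangle_Y$ and $\langle q-p^\diamond,y^\diamond\rangle$ are constant in $y_\theta$ and drop out; the affine term $\langle p^\diamond,y_\theta\rangle_Y$ contributes $p^\diamond$; and for $\mathcal V^0_{\bar T-\theta,Q,\tilde q}(y_\theta-y^\diamond)$ I would invoke Corollary \ref{coro:sensi_v_0} (which gives differentiability of $\mathcal V^0$ and the formula for its gradient) together with the chain rule for the affine inner map $y_\theta\mapsto y_\theta-y^\diamond$, whose differential is the identity. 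With horizon $\bar T-\theta$ and terminal datum $\tilde q$ this yields $\Pi(\bar T-\theta,Q)(y_\theta-y^\diamond)+G(\bar T-\theta,Q)\tilde q$. Summing the three contributions gives \eqref{eq:sensi_01}.

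For \eqref{eq:sensi_02} I would reuse the shift performed in the proof of Lemma \ref{lemma:turnpike}: with $(\tilde y,\tilde u,\tilde p)=(\bar y,\bar u,\bar p)-(y^\diamond,u^\diamond,p^\diamond)$, that argument shows that $(\tilde y,\tilde u,\tilde p)$ solves \eqref{eq:optim_sys_zero} on $[0,\bar T]$ with parameters $(y_0-y^\diamond,\bar T,Q,\tilde q)$, i.e.\@ $\tilde y(0)=y_0-y^\diamond$, $\mathcal H(\tilde y,\tilde u,\tilde p)=(0,0,0)$ and $\tilde p(\bar T)-Q\tilde y(\bar T)=\tilde q$. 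Because the interior equations and the terminal condition are local in time, the restriction of $(\tilde y,\tilde u,\tilde p)$ to $[\theta,\bar T]$ is, after the time shift $t\mapsto t-\theta$, the solution to \eqref{eq:optim_sys_zero} with parameters $(\tilde y(\theta),\bar T-\theta,Q,\tilde q)$. Applying the defining relation \eqref{eq:def_pi_g} to this system --- whose initial-time costate is $\tilde p(\theta)$ --- gives $\tilde p(\theta)=\Pi(\bar T-\theta,Q)\tilde y(\theta)+G(\bar T-\theta,Q)\tilde q$, and substituting $\tilde p(\theta)=\bar p(\theta)-p^\diamond$ and $\tilde y(\theta)=\bar y(\theta)-y^\diamond$ produces \eqref{eq:sensi_02}.

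Each step is individually routine given Corollary \ref{coro:sensi_v_0} and \eqref{eq:def_pi_g}; the point I expect to require the most care is the passage from the value-function gradient to the costate. The shifting route above sidesteps dynamic programming, but an alternative I would keep in mind is to argue directly that the restriction of $(\bar y,\bar u)$ to $[\theta,\bar T]$ is optimal for \eqref{prob:dyn_prog} started at $\bar y(\theta)$ and then use the identity $\bar p(\theta)=D_{y_\theta}\mathcal V_{\bar T,Q,q}(\theta,\bar y(\theta))$ together with \eqref{eq:sensi_01}. The delicate bookkeeping is to confirm that the restricted and shifted triple is genuinely of the form \eqref{eq:optim_sys_zero}, with the correct, $\theta$-independent terminal datum $\tilde q$.
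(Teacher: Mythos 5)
Your proof is correct, and for the second identity it takes a genuinely different route from the paper. For \eqref{eq:sensi_01} you do exactly what the paper does: differentiate \eqref{eq:link_V_V} in $y_\theta$ and invoke Corollary \ref{coro:sensi_v_0}. For \eqref{eq:sensi_02}, however, the paper first establishes the sensitivity relation $\bar{p}(\theta)=D_{y_{\theta}}\mathcal{V}_{\bar{T},Q,q}(\theta,\bar{y}(\theta))$ (re-using the techniques of Lemma \ref{lemma:diff_V} and Corollary \ref{coro:sensi_v_0}, which implicitly requires the dynamic programming principle to know that the restriction of $(\bar{y},\bar{u})$ is optimal for \eqref{prob:dyn_prog}), and then substitutes \eqref{eq:sensi_01}; this is precisely the alternative you mention at the end of your proposal. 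Your main route instead exploits only the linear structure: the shifted triple $(\tilde{y},\tilde{u},\tilde{p})$ from Lemma \ref{lemma:turnpike} solves \eqref{eq:optim_sys_zero} with parameters $(y_0-y^\diamond,\bar{T},Q,\tilde{q})$, its restriction to $[\theta,\bar{T}]$ after a time shift (legitimate since $\mathcal{H}$ is autonomous) is, by uniqueness from Lemma \ref{lemma:lq_pb}, \emph{the} solution with parameters $(\tilde{y}(\theta),\bar{T}-\theta,Q,\tilde{q})$, and \eqref{eq:def_pi_g} then gives $\tilde{p}(\theta)=\Pi(\bar{T}-\theta,Q)\tilde{y}(\theta)+G(\bar{T}-\theta,Q)\tilde{q}$. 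This is more elementary --- no differentiability of the value function and no dynamic programming are needed for \eqref{eq:sensi_02} --- and it makes \eqref{eq:sensi_02} logically independent of \eqref{eq:sensi_01}, whereas in the paper \eqref{eq:sensi_02} is a consequence of \eqref{eq:sensi_01}. What the paper's route buys in exchange is the identity $\bar{p}(\theta)=D_{y_{\theta}}\mathcal{V}_{\bar{T},Q,q}(\theta,\bar{y}(\theta))$ itself, which connects the costate to the value-function gradient and fits the article's guiding idea that the RHC terminal cost should approximate the value function. You correctly isolate the only delicate bookkeeping point, namely that the terminal datum $\tilde{q}=q-p^\diamond+Qy^\diamond$ carried along the restriction is independent of $\theta$.
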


\begin{proof}
Relation \eqref{eq:sensi_01} is obtained by differentiating relation \eqref{eq:link_V_V} and applying Corollary \ref{coro:sensi_v_0}. Using the same techniques as in Lemma \ref{lemma:diff_V} and Corollary \ref{coro:sensi_v_0}, one can prove the following sensitivity relation: $\bar{p}(\theta)=
D_{y_{\theta}} \mathcal{V}_{\bar{T},Q,q}(\theta,\bar{y}(\theta))$.
Applying \eqref{eq:sensi_01}, relation \eqref{eq:sensi_02} follows.
\end{proof}

\section{Error estimate for the RHC algorithm} \label{section:rhc}

The receding-horizon algorithm for solving \eqref{prob:turnpike} consists in solving a sequence of optimal control problems with small time-horizon $T$. A sampling time $\tau \leq T$ is fixed. At iteration $n$ of the algorithm, an optimal control problem is solved on the interval $(n\tau,n\tau + T)$ and only the restriction to $(n\tau,(n+1) \tau)$ of the solution is kept. 
The problem which is solved at the iteration $n$ is of the following form:
\begin{equation} \tag{$P(\theta;\phi)$} \label{prob:inter}
\begin{cases}
\begin{array}{l}
{\displaystyle
\inf_{\begin{subarray}{c} y \in W(\theta,\theta + T) \\ u \in L^2(\theta,\theta + T;U) \end{subarray}} \
\int_{\theta}^{\theta + T} \ell(y(t),u(t)) \dd t  + \phi(\theta + T,y(\theta + T)), } \\[2em]
\text{subject to: } \dot{y}(t)= Ay(t) + Bu(t) + f^\diamond, \quad y(\theta)= y_{\theta},
\end{array}
\end{cases}
\end{equation}
where $\theta$ and $y_\theta$ are given.
Let us describe the function $\phi$ used as final-time cost in the above problem.
We assume that two bounded mappings $\tilde{\Pi} \colon t \in [0,\infty) \rightarrow \mathcal{L}(Y)$ and $\tilde{G} \colon t \in [0,\infty) \rightarrow \mathcal{L}(Y)$ are given as well as an element $\tilde{p} \in Y$. For all $t \geq 0$, the operator $\tilde{\Pi}(t)$ is assumed to be self-adjoint and positive semi-definite.
The function $\phi$ is defined by
\begin{equation} \label{eq:def_phi_1}
\phi(t,y)= {\textstyle \frac{1}{2} } \langle y-y^\diamond, \tilde{\Pi}(\bar{T}-t) (y-y^\diamond) \rangle_Y
+ \langle \tilde{G}(\bar{T}-t) \tilde{q}, y \rangle
+ \langle \tilde{p}, y \rangle_Y. 
\end{equation}
Observe that
\begin{equation*}
D_y \phi(\theta + T,y)= 
\tilde{\Pi} \big( \bar{T}-(\theta + T) \big) (y-y^\diamond)
+ \tilde{G} \big( \bar{T}-(\theta + T) \big) \tilde{q} + \tilde{p}.
\end{equation*}
This relation shows that $\phi(\theta + T,\cdot)$ can be viewed as an approximation of the value function $\mathcal{V}_{\bar{T},Q,q}(\theta + T, \cdot)$ (up to an additive constant independent of the variable $y$). If $\tilde{p}= p^\diamond$ and if $\tilde{\Pi}$ and $\Pi(\cdot, Q)$ as well as $\tilde{G}$ and $G(\cdot, Q)$ coincide at time $\bar{T}-(\theta + T)$, then the two problems \eqref{prob:dyn_prog} and \eqref{prob:inter} are equivalent, by the dynamic programming principle.

A third parameter $N$ such that $N \tau \leq \bar{T}$ is also considered. At time $N \tau$, Problem \eqref{prob:dyn_prog} is solved (with $\theta= N \tau$).
We give now a precise description of the algorithm.

\begin{algorithm}[H]
\begin{algorithmic}
\STATE Input: $\tau \geq 0$, $T \geq \tau$, and $N$ such that $N \tau \leq \bar{T}$;
\FOR{$n=0,1,2,...,N-1$}
\STATE Find the solution $(y,u)$ to \eqref{prob:inter} with $\theta= n\tau$, $y_\theta= y_n$, and $\phi$ given by \eqref{eq:def_phi_1};
\STATE Set $y_{RH}(t)= y(t)$ and $u_{RH}(t)= u(t)$ for a.e.\@ $t \in (n \tau, (n+1) \tau)$;
\STATE Set $y_{n+1}= y(\tau)$;
\ENDFOR
\STATE Find the solution $(y,u)$ to Problem \eqref{prob:dyn_prog} with $\theta= N \tau$ and $y_\theta= y_{N}$;
\STATE Set $y_{RH}(t)= y(t)$ and $u_{RH}(t)= u(t)$ for a.e.\@ $t \in (N \tau,\bar{T})$;
\end{algorithmic}
\caption{Receding-Horizon method}
\label{algo:rh_turnpike}
\end{algorithm}

We are now ready to state and prove the main result of the article. We make use of the following assumptions on $\tilde{\Pi}$ and $\tilde{G}$.

\begin{hypothesis} \label{assumption:pi_g}
For all $t \geq 0$, $\tilde{\Pi}(t)$ is self-adjoint positive semi-definite. There exists a constant $M > 0$ such that $\| \tilde{G}(t) \|_{\mathcal{L}(Y)} \leq M e^{-\lambda t}$ and $\| \tilde{\Pi}(t) \|_{\mathcal{L}(Y)} \leq M$, $\forall t \geq 0$.
\end{hypothesis}

Let us remark that a simple possible choice is $\tilde{\Pi}= 0$, $\tilde{G}= 0$. In this situation, we then have $\phi(t,y)= \langle \tilde{p}, y \rangle_Y$. We denote
\begin{align*}
\| \tilde{\Pi}- \Pi \|_{\infty} = \ & \sup_{T \in [0,\infty)} \| \tilde{\Pi}(T) - \Pi(T,Q) \|_{\mathcal{L}(Y)} \\
\| \tilde{G}- G \|_{\infty,\lambda} = \ & \sup_{T \in [0,\infty)} \| e^{\lambda T} (\tilde{G}(T)-G(T,Q)) \|_{\mathcal{L}(Y)}.
\end{align*}
By Assumption \ref{assumption:pi_g} and Lemma \ref{lemma:decay_g}, $\| \tilde{\Pi} - \Pi \|_\infty$ and $\| \tilde{G} - G \|_{\infty,\lambda}$ are finite.

\begin{theorem} \label{theo:main}
There exist two constants $\tau_0>0$ and $M>0$ such that for all $\tau$ and $T$ with $\tau_0 \leq \tau \leq T \leq \bar{T}$ and for all $N$ with $N\tau \leq T$, the following estimate holds true:
\begin{align}
& \max \big( \| y_{RH}- \bar{y} \|_{W(0,\bar{T})}, \| u_{RH} - \bar{u} \|_{L^2(0,\bar{T};U)} \big) \notag \\
& \qquad \qquad \leq M e^{-\lambda(T-\tau)} \big( e^{-\lambda T} K_1 + e^{-\lambda(\bar{T}- (N \tau + T))} K_2 + N \| \tilde{p} - p^\diamond \|_Y \big), \label{eq:theo_main_1}
\end{align}
where $K_1= \| \tilde{\Pi}- \Pi \|_\infty \| y_0 - y^\diamond \|_Y$ and $K_2= \big( \| \tilde{\Pi}- \Pi \|_\infty + \| \tilde{G} - G \|_{\infty,\lambda} \big) \| \tilde{q} \|_Y$.
Moreover,
\begin{align}
& J_{\bar{T},Q,q}(y_{RH},u_{RH}) - \mathcal{V}_{\bar{T},Q,q}(0,y_0) \notag \\
& \qquad \leq
M e^{-2\lambda(T-\tau)} \big( e^{-\lambda T} K_1 + e^{-\lambda(\bar{T}- (N \tau + T))} K_2 + N \| \tilde{p} - p^\diamond \|_Y \big)^2.
\label{eq:theo_main_2}
\end{align}
The constant $M$ is independent of $(y_0,f^\diamond,g^\diamond,h^\diamond,q)$, $Q$, $\bar{T}$, $\tau$, $T$, and $N$.
\end{theorem}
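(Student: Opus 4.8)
The plan is to control the error one iteration at a time and then reassemble the slabs. Fix an iteration index $n$ and write $s_n = \bar{T} - (n\tau + T)$. Let $(y^{(n)},u^{(n)},p^{(n)})$ be the solution of the subproblem \eqref{prob:inter} solved at step $n$ on $(n\tau,n\tau+T)$, and let $(\bar{y},\bar{u},\bar{p})$ be the exact solution restricted to that same interval. Both satisfy $\mathcal{H}(\cdot)=(f^\diamond,g^\diamond,-h^\diamond)$, so their difference $(e_y,e_u,e_p)$ satisfies $\mathcal{H}(e_y,e_u,e_p)=(0,0,0)$ with $e_y(n\tau)=\delta_n:=y_{RH}(n\tau)-\bar{y}(n\tau)$. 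The first-order condition for \eqref{prob:inter} gives $p^{(n)}(n\tau+T)=\tilde{\Pi}(s_n)(y^{(n)}(n\tau+T)-y^\diamond)+\tilde{G}(s_n)\tilde{q}+\tilde{p}$, whereas Corollary \ref{coro:sensi_for_real_pb} gives the exact relation $\bar{p}(n\tau+T)=\Pi(s_n,Q)(\bar{y}(n\tau+T)-y^\diamond)+G(s_n,Q)\tilde{q}+p^\diamond$. Subtracting, I would show that $(e_y,e_u,e_p)$ solves \eqref{eq:non_reg_os} with $Q$ replaced by $\tilde{\Pi}(s_n)$ (self-adjoint, positive semidefinite, and in a fixed bounded set by Assumption \ref{assumption:pi_g}), with zero $(f,g,h)$, initial datum $\delta_n$, and terminal datum
\[ q_n = (\tilde{\Pi}(s_n)-\Pi(s_n,Q))(\bar{y}(n\tau+T)-y^\diamond) + (\tilde{G}(s_n)-G(s_n,Q))\tilde{q} + (\tilde{p}-p^\diamond). \]
This is the crucial structural step: the mismatch between the RHC terminal cost and the true value function produces exactly the three error sources of $K_1$, $K_2$, and $\|\tilde{p}-p^\diamond\|_Y$. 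Using the turnpike bound of Lemma \ref{lemma:turnpike} on $\|\bar{y}(n\tau+T)-y^\diamond\|_Y$, together with Assumption \ref{assumption:pi_g} and Lemma \ref{lemma:decay_g}, I would then bound $\|q_n\|_Y\le M e^{-\lambda(n\tau+T)}K_1 + M e^{-\lambda s_n}K_2 + \|\tilde{p}-p^\diamond\|_Y$.

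Next I would use the two weighted a priori estimates of Theorem \ref{theo:non_reg_os}. By linearity I split $(e_y,e_u,e_p)$ into a part $A$ driven by $\delta_n$ (zero terminal datum) and a part $B$ driven by $q_n$ (zero initial datum). For part $A$ I apply Theorem \ref{theo:non_reg_os} with $\mu=\lambda$, obtaining $e^{\lambda t}\|e_y^A(t)\|_Y\le M\|\delta_n\|_Y$, i.e.\@ decay away from the left endpoint. For part $B$ I apply it with $\mu=-\lambda$, obtaining $\|e_y^B(t)\|_Y\le M e^{-\lambda(T-t)}\|q_n\|_Y$, i.e.\@ the terminal mismatch decays as one moves backward from the right endpoint. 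Since the algorithm keeps only the restriction to $(n\tau,(n+1)\tau)$, every retained instant is at distance at least $T-\tau$ from the right endpoint, so part $B$ contributes at most $M e^{-\lambda(T-\tau)}\|q_n\|_Y$ on the kept slab: this is the origin of the leading factor $e^{-\lambda(T-\tau)}$. Evaluating at the sampling instant yields the recursion $\|\delta_{n+1}\|_Y\le M e^{-\lambda\tau}\|\delta_n\|_Y + M e^{-\lambda(T-\tau)}\|q_n\|_Y$, with $\delta_0=0$.

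I would then fix $\tau_0$ so that $M e^{-\lambda\tau}\le\tfrac12$ whenever $\tau\ge\tau_0$, making the recursion a contraction. Solving it and inserting the bound on $\|q_n\|_Y$ gives $\|\delta_n\|_Y\le M e^{-\lambda(T-\tau)}\big(e^{-\lambda T}K_1 + e^{-\lambda(\bar{T}-(N\tau+T))}K_2 + \|\tilde{p}-p^\diamond\|_Y\big)$ uniformly in $n$, the geometric summation absorbing the $n$-dependence of the first two sources while the persistent bias $\|\tilde{p}-p^\diamond\|_Y$ survives as a bounded multiple. To pass to the global estimate \eqref{eq:theo_main_1}, I transfer the per-interval weighted bounds to unweighted $W$ and $L^2$ norms on each slab via Lemma \ref{lemma:embedding}, and treat the final slab $(N\tau,\bar{T})$ separately: there the terminal cost is the exact one, so the error is driven by $\delta_N$ alone through Theorem \ref{theo:non_reg_os} with $\mu=\lambda$. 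Summing the slabs into the $W(0,\bar{T})$ and $L^2(0,\bar{T};U)$ norms, the (square) summation over the slabs accounts for a factor at most $N$ multiplying $\|\tilde{p}-p^\diamond\|_Y$. Finally, for \eqref{eq:theo_main_2} I note that $(y_{RH},u_{RH})$ is admissible for \eqref{prob:turnpike} with $y_{RH}(0)=y_0$, so Lemma \ref{lemma:diff_V} (via the equivalence of Lemma \ref{lemma:link_value_function}) bounds $J_{\bar{T},Q,q}(y_{RH},u_{RH})-\mathcal{V}_{\bar{T},Q,q}(0,y_0)$ by $M\max\big(\|y_{RH}-\bar{y}\|_{W(0,\bar{T})}^2,\|u_{RH}-\bar{u}\|_{L^2(0,\bar{T};U)}^2\big)$, i.e.\@ by the square of \eqref{eq:theo_main_1}.

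The main obstacle is the second paragraph: the decay factor $e^{-\lambda(T-\tau)}$ rests entirely on having both the positive- and negative-weight a priori bounds of Theorem \ref{theo:non_reg_os} with a constant $M$ independent of $T$, and on the exact identification of $q_n$. The delicate part is the bookkeeping that keeps every constant uniform in $T$, $\tau$, $N$, and $\bar{T}$ while propagating the recursion and reassembling the slabs, since the weights shift between $\Lambda_{T,\lambda}$, $\Lambda_{T,-\lambda}$, and the unweighted norms at each transfer.
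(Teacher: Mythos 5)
Your proposal follows the paper's own proof almost step for step: the per-slab comparison of the two optimality systems, the identification of the terminal mismatch $q_n$ (the paper's $w$ in \eqref{eq:def_w}), its bound via Lemma \ref{lemma:turnpike}, Assumption \ref{assumption:pi_g} and Lemma \ref{lemma:decay_g}, the splitting of the error into a part driven by $\delta_n$ (estimated with $\mu=\lambda$) and a part driven by $q_n$ (estimated with $\mu=-\lambda$, producing the factor $e^{-\lambda(T-\tau)}$), the resulting recursion $\|\delta_{n+1}\|_Y \le M e^{-\lambda \tau}\|\delta_n\|_Y + M e^{-\lambda(T-\tau)}\|q_n\|_Y$, the choice of $\tau_0$ turning it into a contraction, the separate treatment of the last slab, and the quadratic bound \eqref{eq:theo_main_2} via the argument of Lemma \ref{lemma:diff_V}. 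All of this is correct and coincides with Step 1 of the paper's proof.

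The flaw is in the final assembly. You first collapse the recursion to a bound on $\|\delta_n\|_Y$ that is \emph{uniform in $n$}, and only afterwards sum the slabs, asserting that the summation ``accounts for a factor at most $N$ multiplying $\|\tilde{p}-p^\diamond\|_Y$.'' That assertion does not follow from what precedes it: if the only per-slab bound available is $a_n \le M e^{-\lambda(T-\tau)}\big(e^{-\lambda T}K_1 + e^{-\lambda(\bar{T}-(N\tau+T))}K_2 + \|\tilde{p}-p^\diamond\|_Y\big)$, uniform in $n$, then $\sum_{n=0}^{N} a_n$ carries the factor $N$ in front of \emph{all three} terms, which is strictly weaker than \eqref{eq:theo_main_1}. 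To keep $N$ off $K_1$ and $K_2$ you must carry the $n$-resolved bounds all the way to the summation: solving the recursion without discarding the decay gives, as in the paper's \eqref{eq:induc3_5},
\begin{equation*}
\|\delta_n\|_Y \le M e^{-\lambda(T-\tau)}\big( K_1 e^{-\lambda T} r^{n-1} + K_2 e^{-\lambda(\bar{T}-T)} e^{n\lambda\tau} + \|\tilde{p}-p^\diamond\|_Y \big),
\end{equation*}
with $r<1$ fixed by the choice of $\tau_0$ (the paper organizes this through the auxiliary sequences $c_n$, $d_n$, $e_n$ and Lemma \ref{lemma:induction}); the same structure then holds for $a_n$, and only at that point does one sum over the slabs, using that $\sum_{n} r^{n-1} \le M$ and $\sum_{n=0}^{N-1} e^{n\lambda\tau} \le M e^{N\lambda\tau}$ are geometric sums bounded independently of $N$, so that the $K_1$ and $K_2$ contributions remain $N$-free and only the constant bias accumulates to $N\|\tilde{p}-p^\diamond\|_Y$. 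This is a bookkeeping repair rather than a missing idea --- every ingredient is already present in your proposal --- but as written the last step proves a weaker estimate than the theorem claims.
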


\begin{remark}
Estimate \eqref{eq:theo_main_1} suggests that the quality of the solution provided by the Receding-Horizon algorithm can be improved by either reducing $\tau$, by increasing $T$, or by reducing $N$, which is intuitive. Let us mention, however, that the constant $\tau_0$ constructed in the proof cannot be chosen arbitrarily small, therefore, our result does not give information on the quality of the solution for arbitrarily small sampling times.

The error estimate also suggests to choose $\tilde{p}= p^\diamond$. In this case, one can recommend to choose $N$ such that $N \approx (\bar{T} - 2T)/\tau$, so that the two error terms $e^{-\lambda T} K_1$ and $e^{-\lambda(\bar{T}- (N \tau + T))} K_2$ are of the same order (with respect to $T$).
\end{remark}

\begin{remark}
The necessity of a lower bound $\tau_0$ for the sampling time is revealed in the proof below; in a nutshell, this lower bound ultimately allows to sum up the error terms accumulated at each iteration of the algorithm. Let us mention that this bound is not necessary in other works based on a dynamic programming approach and dealing with continuous-time systems. Still in those works, a lower bound on the prediction horizon $T$, depending on $\tau$, is needed (see \cite{AK2,AK1,AK3}).
\end{remark}

\begin{proof}[Proof of Theorem \ref{theo:main}]
Let us set define, for $n \in \{ 0,...,N-1 \}$,
\begin{align*}
a_n = \ & \max \big( \| y_{RH}- \bar{y} \|_{W(n \tau, (n+1) \tau )}, \| u_{RH} - \bar{u} \|_{L^2(n \tau, (n+1) \tau;U)} \big) \\
b_n = \ & \| y_{RH}(n\tau) - \bar{y}(n\tau) \|_Y.
\end{align*}
We also define $a_{N}= \max \big( \| y_{RH}- \bar{y} \|_{W(N \tau, \bar{T})}, \| u_{RH} - \bar{u} \|_{L^2(N \tau, \bar{T};U)} \big)$.
Let $M_0$ be the constant involved in Theorem \ref{theo:non_reg_os}, for $\mu= \lambda$ and for $\mathcal{Q}= \{ \tilde{\Pi}(t) \,|\, t \geq 0 \}$. Necessarily, $M_0 \geq 1$. Let $r \in (0,1)$ be a fixed real number and let the constant $\tau_0>0$ be such that $e^{-\lambda \tau_0} \leq M_0 e^{-\lambda \tau_0} < r < 1$.

\emph{Step 1: }proof of estimates on $a_n$ and $b_n$. \\
The first part of the proof consists in proving the following three estimates.
\begin{align}
a_n \leq \ & M b_n
+ M e^{-\lambda(T-\tau)} \big( e^{-\lambda(n\tau + T)} K_1
+ e^{-\lambda(\bar{T}-(n \tau + T))} K_2 + \| \tilde{p} - p^\diamond \|_Y \big), \label{eq:estimate_an} \\
b_{n+1} \leq \ & r b_n
+ M e^{-\lambda(T-\tau)} \big( e^{-\lambda(n\tau + T)} K_1
+ e^{-\lambda(\bar{T}-(n \tau + T))} K_2 + \| \tilde{p} - p^\diamond \|_Y \big), \label{eq:estimate_bn} \\
a_{N} \leq \ & M b_{N}, \label{eq:estimate_an_max}
\end{align}
for all $n = 0,...,N-1$.
Let us set $t_n= n \tau$ and $t_n'= n\tau + T$, for all $n=0,...,N$.
We also set $\bar{y}_n= \bar{y}(n \tau)$ and recall that $y_n= y_{RH}(n\tau)$.
Let us denote by $(y,u)$ the solution to problem \eqref{prob:inter} with $\theta= n \tau$ and $y_\theta= y_n$. Let $p$ be the associated costate. By construction, $(y_{RH},u_{RH})$ and $(y,u)$ coincide on the interval $(t_n,t_{n+1})$.
Let us write the optimality conditions satisfied by $(\bar{y},\bar{u},\bar{p})$ and $(y,u,p)$ on the interval $(t_n,t_n')$. By Corollary \ref{coro:sensi_for_real_pb}, we have
\begin{equation*}
\begin{cases}
\begin{array}{rl}
\bar{y}(t_n) = & \! \! \! \bar{y}_n  \\
\mathcal{H}(\bar{y},\bar{u},\bar{p})= & \! \! \! (f^\diamond,g^\diamond,-h^\diamond) \\
\bar{p}(t_n') - \Pi(\bar{T}-t_n',Q) (\bar{y}(t_n') - y^\diamond)= & \! \! \! G(t_n') \tilde{q} + p^\diamond.
\end{array}
\end{cases}
\end{equation*}
The optimality conditions associated with $(y,u,p)$ write
\begin{equation*}
\begin{cases}
\begin{array}{rl}
y(t_n) = & \! \! \! y_n  \\
\mathcal{H}(y,u,p)= & \! \! \! (f^\diamond,g^\diamond,-h^\diamond) \\
p(t_n') - \tilde{\Pi}(\bar{T}-t_n') (y(t_n')- y^\diamond)= & \! \! \! \tilde{G}(t_n') \tilde{q} + \tilde{p}.
\end{array}
\end{cases}
\end{equation*}
Thus, the triple $(\hat{y},\hat{u},\hat{p})(t):=(y,u,p)(t_n + t)-(\bar{y},\bar{u},\bar{p})(t_n + t)$ satisfies
\begin{equation} \label{eq:sys_for_diff}
\begin{cases}
\begin{array}{rl}
\hat{y}(0) = & \! \! \! y_n - \bar{y}_n  \\
\mathcal{H}(\hat{y},\hat{u},\hat{p})= & \! \! \! (0,0,0) \\
\hat{p}(T) - \tilde{\Pi}(\bar{T}-t_n') \hat{y}(T)= & \! \! \! w,
\end{array}
\end{cases}
\end{equation}
where
\begin{align} 
w = \ & \big( \tilde{\Pi}(\bar{T}-t_n')-\Pi(\bar{T}-t_n',Q) \big) (\bar{y}(t_n')-y^\diamond) \notag \\
& \qquad + \big( \tilde{G}(\bar{T}-t_n')-G(\bar{T}-t_n') \big) \tilde{q} + (\tilde{p} - p^\diamond). \label{eq:def_w}
\end{align}
The triple $(\hat{y},\hat{u},\hat{p})$ is the solution to \eqref{eq:optim_sys_zero} with parameters $(y_n-\bar{y}_n,T,\tilde{\Pi}(\bar{T}-t_n'),w)$.
Let us estimate $\| w \|_Y$. By Theorem \ref{theorem:turnpike}, we have
\begin{equation} \label{eq:def_w_1}
\| \bar{y}(t_n') - y^\diamond \|_Y \leq M \big( e^{-\lambda(n \tau + T)} \| y_0 -y_0^\diamond \|_Y + e^{-\lambda(\bar{T}-(n\tau + T))} \| \tilde{q} \| \big).
\end{equation}
By assumption,
\begin{equation} \label{eq:def_w_2}
\| \tilde{G}(\bar{T}-t_n') - G(\bar{T}-t_n',Q) \|_{\mathcal{L}(Y)}
\leq e^{-\lambda(\bar{T} - (n \tau + T))} \| \tilde{G}-G \|_{\infty,\lambda}.
\end{equation}
Combining \eqref{eq:def_w}, \eqref{eq:def_w_1}, and \eqref{eq:def_w_2}, and using the definitions of $K_1$ and $K_2$, we obtain
\begin{equation} \label{eq:estimate_w}
\| w \|_Y \leq e^{-\lambda(n\tau + T)} K_1 + e^{-\lambda(\bar{T}-(n \tau + T))} K_2 + \| \tilde{p} - p^\diamond \|_Y.
\end{equation}
Let us find now some estimates for $(\hat{y},\hat{u},\hat{p})$. To this end, we proceed as in the proof of Theorem \ref{theorem:turnpike}. We consider the solutions $(\hat{y}^{(1)},\hat{u}^{(1)},\hat{p}^{(1)})$ and $(\hat{y}^{(2)},\hat{u}^{(2)},\hat{p}^{(2)})$ to the linear system \eqref{eq:optim_sys_zero}, with parameters $(y_n-\bar{y}_n,T,\tilde{\Pi}(\bar{T}-t_n'),0)$ and $(0,T,\tilde{\Pi}(\bar{T}-t_n'),w)$, respectively, so that $(\hat{y},\hat{u},\hat{p})= (\hat{y}^{(1)},\hat{u}^{(1)},\hat{p}^{(1)}) + (\hat{y}^{(2)},\hat{u}^{(2)},\hat{p}^{(2)})$. Let us first apply Theorem \ref{theo:non_reg_os} to the first system (with $\mu= 0$). We obtain
\begin{equation} \label{eq:estima_hat_1}
\| (\hat{y}^{(1)},\hat{u}^{(1)},\hat{p}^{(1)}) \|_{\Lambda_{T,0}}
\leq M \| y_n - \bar{y}_n \|_Y = M b_n.
\end{equation}
Lemma \ref{lemma:embedding} and
Theorem \ref{theo:non_reg_os}, applied to $(\hat{y}^{(2)},\hat{u}^{(2)},\hat{p}^{(2)})$ with $\mu= -\lambda$, yield
\begin{align}
\| (\hat{y}^{(2)},\hat{u}^{(2)},\hat{p}^{(2)}) \|_{\Lambda_{\tau,0}}
\leq \ & Me^{\lambda \tau} \| (\hat{y}^{(2)},\hat{u}^{(2)},\hat{p}^{(2)}) \|_{\Lambda_{\tau,-\lambda}} \notag \\
\leq \ & Me^{\lambda \tau} \| (\hat{y}^{(2)},\hat{u}^{(2)},\hat{p}^{(2)}) \|_{\Lambda_{T,-\lambda}} \notag \\
\leq \ & M  e^{-\lambda (T-\lambda)} \| w \|_Y.
\label{eq:estima_hat_2}
\end{align}
We deduce from \eqref{eq:estima_hat_1} and \eqref{eq:estima_hat_2} that
\begin{align}
a_n = \ & \max \big( \| \hat{y} \|_{W(0,\tau)}, \| \hat{u} \|_{L^2(0,\tau)} \big) \notag \\
\leq \ & \| (\hat{y},\hat{u},\hat{p}) \|_{\Lambda_{\tau,0}} \notag \\
\leq \ & \| (\hat{y}^{(1)},\hat{u}^{(1)},\hat{p}^{(1)}) \|_{\Lambda_{\tau,0}} + \| (\hat{y}^{(2)},\hat{u}^{(2)},\hat{p}^{(2)}) \|_{\Lambda_{\tau,0}} \notag \\
\leq \ & M \big( b_n + e^{-\lambda(T-\lambda)} \| w \|_Y \big). \label{eq:estimate_for_diff}
\end{align}
Estimate \eqref{eq:estimate_an} follows from \eqref{eq:estimate_w} and \eqref{eq:estimate_for_diff}.
Let us apply again Theorem \ref{theo:non_reg_os} to $(\hat{y}^{(1)},\hat{u}^{(1)},\hat{p}^{(1)})$, now with $\mu= \lambda$. We obtain
\begin{equation} \label{eq:estima_hat_3}
\| (\hat{y}^{(1)},\hat{u}^{(1)},\hat{p}^{(1)}) \|_{\Lambda_{T,\lambda}}
\leq M_0 \| y_n - \bar{y}_n \|_Y = M_0 b_n.
\end{equation}
It follows that $\| \hat{y}^{(1)}(\tau) \|_Y \leq M_0 e^{\lambda \tau} b_n \leq M_0 e^{\lambda \tau_0} b_n = r b_n$.
As a direct consequence of \eqref{eq:estima_hat_2}, we have
$\| \hat{y}^{(2)}(\tau) \|_Y \leq M e^{-\lambda (T-\tau)} \| w \|_Y.$
It follows that
\begin{equation} \label{eq:estimate_for_diff_2}
b_{n+1}= \| \hat{y}(\tau) \|_Y
\leq \| \hat{y}^{(1)}(\tau) \|_Y + \| \hat{y}^{(2)}(\tau) \|_Y
\leq r b_n + M e^{-\lambda(T-\tau)} \| w \|_Y.
\end{equation}
Estimate \eqref{eq:estimate_bn} follows from \eqref{eq:estimate_w} and \eqref{eq:estimate_for_diff_2}.

Let us prove the estimate on $a_{N}$. Denoting by $(y,u,p)$ the solution to \eqref{prob:dyn_prog} with $\theta= N \tau$ and $y_{\theta}= y_{N}$, we obtain that $(\hat{y},\hat{u},\hat{p})(t):= (y,u,p)-(\bar{y},\bar{u},\bar{p})(t_{N} + t)$ is the solution to \eqref{eq:optim_sys_zero}, with parameters $(y_{N}-\bar{y}_{N},\bar{T}-t_{N},Q,0)$.
Applying Theorem \ref{theo:non_reg_os} with $\mu= 0$, we obtain
$a_{N}
\leq \| (\hat{y},\hat{u},\hat{p}) \|_{\Lambda_{\bar{T}-t_{N},0}}
\leq M \| y_{N}-\bar{y}_{N} \|_Y
\leq M b_{N}$,
as was to be proved.

\emph{Step 2: }proof of the general estimates.\\
In order to prove the result, we need to find an estimate for $\sum_{n= 0}^{N} a_n$. We start by estimating $b_n$.
Re-arranging \eqref{eq:estimate_bn}, we obtain that
\begin{align*}
b_{n+1}
\leq \ & r b_n + \big(M e^{-\lambda(T-\tau) - \lambda T} K_1 \big) e^{-n \lambda \tau} \\
& \qquad + \big( M e^{-\lambda(T-\tau) - \lambda(\bar{T}-T)} K_2 \big) e^{n \lambda \tau}
+ \big( M e^{-\lambda(T-\tau)} \| \tilde{p} - p^\diamond \|_Y \big).
\end{align*}
Let us introduce three sequences $(c_n)_{n= 0,...,N}$, $(d_n)_{n= 0,...,N}$, and $(e_n)_{n= 0,...,N}$ defined by $c_0= 0$, $d_0= 0$, $e_0= 0$, and
\begin{align*}
c_{n+1}= r c_n + e^{-n\lambda \tau}, \quad
d_{n+1}= r d_n + e^{n \lambda \tau}, \quad
e_{n+1}= r e_n + 1.
\end{align*}
It is easy to check by induction that
\begin{equation} \label{eq:induc1}
b_n \leq M \big[ \big(e^{-2\lambda T+ \lambda \tau} K_1 \big) c_n
+ \big(e^{-\lambda(T-\tau) - \lambda(\bar{T}-T)} K_2 \big) d_n
+ \big( e^{-\lambda(T-\tau)} \| \tilde{p} - p^\diamond \| \big) e_n \big].
\end{equation}
Lemma \ref{lemma:induction} below allows to estimate $(c_n)_{n= 0,...,N}$, $(d_n)_{n= 0,...,N}$, and $(e_n)_{n= 0,...,N}$. We have $r > e^{-\lambda \tau_0} \geq e^{-\lambda \tau}$, thus
\begin{equation} \label{eq:induc2}
c_n \leq \Big( 1- \frac{e^{-\lambda \tau}}{r} \Big)^{-1} r^{n-1}
\leq \Big( 1- \frac{e^{-\lambda \tau_0}}{r} \Big)^{-1} r^{n-1} \leq M r^{n-1}.
\end{equation}
Moreover, $r < 1 \leq e^{\lambda \tau}$, therefore
\begin{equation} \label{eq:induc3}
d_n \leq \Big( 1 - \frac{r}{e^{\lambda \tau}} \Big)^{-1} e^{(n-1) \lambda \tau}
\leq ( 1 - r )^{-1} e^{(n-1) \lambda \tau}
\leq M e^{(n-1) \lambda \tau}.
\end{equation}
We also have $\| e_n \|_Y \leq M$.
Combining \eqref{eq:induc1}, \eqref{eq:induc2}, and \eqref{eq:induc3}, we obtain that
\begin{equation} \label{eq:induc3_5}
b_n \leq M \big( K_1 e^{-2\lambda T + \lambda \tau} r^{n-1}
+ K_2 e^{-\lambda(T-\tau) - \lambda (\bar{T} - T)} e^{(n-1) \lambda \tau} + e^{-\lambda(T-\tau)} \| \tilde{p} - p^\diamond \| \big).
\end{equation}
Combining the above inequality with \eqref{eq:estimate_an}, we obtain that
\begin{align} 
a_n \leq \ & M  K_1 e^{-\lambda(T-\tau) - \lambda T}  \big( r^{n-1} + e^{-\lambda n \tau} \big) \notag \\
& \quad + M K_2 e^{-\lambda(T-\tau) - \lambda (\bar{T} - T)} \big( e^{(n-1) \lambda \tau} + e^{n \lambda \tau} \big)
+ M e^{-\lambda(T-\tau)} \| \tilde{p} - p^\diamond \|. \label{eq:induc4}
\end{align}
We have
$e^{-n \lambda \tau}
\leq e^{-(n-1) \lambda \tau}
\leq e^{-(n-1) \lambda \tau_0}
\leq r^{n-1}$
as well as $e^{(n-1) \lambda \tau} \leq e^{-\lambda \tau_0} e^{n \lambda \tau}$,
which allows to simplify \eqref{eq:induc4} as follows:
\begin{equation} \label{eq:induc5}
a_n \leq M \big( K_1 e^{-2 \lambda T + \lambda \tau } r^{n-1}
+ K_2 e^{-\lambda(T-\tau) - \lambda (\bar{T} - T)}  e^{n \lambda \tau} + e^{-\lambda(T-\tau)} \| \tilde{p} - p^\diamond \| \big).
\end{equation}
We have
\begin{equation} \label{eq:induc6}
\begin{cases}
\begin{array}{l}
{\displaystyle \sum_{n=0}^{N-1} r^{n-1}
\leq \frac{1}{r} \sum_{n= 0}^{\infty} r^n
= \frac{1}{r(1-r)} \leq M }\\[1.5em]
{\displaystyle \sum_{n=0}^{N-1} e^{n \lambda \tau}
= \frac{e^{N\lambda \tau} - 1}{e^{\lambda \tau}-1}
\leq \frac{e^{N \lambda \tau}}{e^{\lambda \tau_0}-1}
\leq M e^{N \lambda \tau}. }
\end{array}
\end{cases}
\end{equation}
Combining \eqref{eq:induc5} and \eqref{eq:induc6}, we obtain that
\begin{equation} \label{eq:induc8}
\sum_{n=0}^{N-1} a_n
\leq M \big( K_1 e^{-2 \lambda T + \lambda \tau} + K_2 e^{-\lambda(T-\tau) - \lambda(\bar{T} - (N \tau + T))} + N e^{-\lambda(T-\tau)} \| \tilde{p} - p^\diamond \| \big).
\end{equation}
We obtain with \eqref{eq:estimate_an_max} and \eqref{eq:induc3_5} that
\begin{align}
a_{N}
\leq \ & M b_{N} \notag \\
\leq \ & M \big( K_1 e^{-2 \lambda T + \lambda \tau} r^{N-1}
+ K_2 e^{-\lambda(T-\tau) - \lambda (\bar{T} - T)} e^{(N-1) \lambda \tau} + e^{-\lambda(T-\tau)} \| \tilde{p} - p^\diamond \| \big) \notag \\
\leq \ & M \big( K_1 e^{-2 \lambda T + \lambda \tau} + K_2 e^{-\lambda(T-\tau) - \lambda(\bar{T} - (N \tau + T))} + e^{-\lambda(T-\tau)} \| \tilde{p} - p^\diamond \| \big). \label{eq:induc9}
\end{align}
Finally, \eqref{eq:induc8} and \eqref{eq:induc9} yield
\begin{align*}
& \max \big( \| y_{RH}- \bar{y} \|_{W(0,\bar{T})}, \| u_{RH} - \bar{u} \|_{L^2(0,\bar{T};U)} \big) \leq {\textstyle \sum_{n=0}^{N} } a_n \\
& \qquad
\leq M \big( K_1 e^{-2 \lambda T + \lambda \tau} + K_2 e^{-\lambda(T-\tau) - \lambda(\bar{T} - (N \tau + T))} + N e^{-\lambda(T-\tau)} \| \tilde{p} - p^\diamond \| \big),
\end{align*}
which proves \eqref{eq:theo_main_1}.
Using the same techniques as in Lemma \ref{lemma:diff_V}, one can show the existence of $M$ such that
\begin{equation*}
J_{T,Q,q}(u_{RH},y_{RH})- \mathcal{V}_{\bar{T},Q,q}(0,y_0)
\leq M \max \big( \| y_{RH} - \bar{y} \|_{W(0,\bar{T})}, \| u_{RH}- \bar{u} \|_{L^2(0,T;U)} \big)^2.
\end{equation*}
Using then \eqref{eq:theo_main_1}, we obtain \eqref{eq:theo_main_2}. The theorem is proved.
\end{proof}

The following lemma is an independent technical result, used only in the above proof.

\begin{lemma} \label{lemma:induction}
Let $r_1 > 0$ and $r_2 > 0$ be two positive real numbers. Consider the sequence $(\xi_n)_{n \in \mathbb{N}}$ defined by
\begin{equation*}
\xi_0= 0, \quad \xi_{n+1}= r_1 \xi_n + r_2^n, \quad \forall n \in \mathbb{N}.
\end{equation*}
If $r_2 < r_1$, then $\xi_n \leq \frac{1}{1- r_2/r_1} r_1^{n-1}$, for all $n \in \mathbb{N}$.
If $r_1 < r_2$, then $\xi_n \leq \frac{1}{1- r_1/r_2} r_2^{n-1}$, for all $n \in \mathbb{N}$.
\end{lemma}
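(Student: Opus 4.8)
The plan is to solve the linear recursion explicitly and then to bound the resulting geometric sum differently in each of the two regimes. First I would unroll the recursion: since $\xi_0 = 0$ and $\xi_{n+1} = r_1 \xi_n + r_2^n$, a straightforward induction on $n$ shows that
\begin{equation*}
\xi_n = \sum_{k=0}^{n-1} r_1^{\,n-1-k} r_2^{\,k}, \quad \forall n \in \mathbb{N}.
\end{equation*}
Indeed, this formula holds trivially for $n=0$ (empty sum) and $n=1$ (giving $\xi_1 = 1$), and if it holds at rank $n$ then
\begin{equation*}
\xi_{n+1} = r_1 \sum_{k=0}^{n-1} r_1^{\,n-1-k} r_2^{\,k} + r_2^{\,n} = \sum_{k=0}^{n} r_1^{\,n-k} r_2^{\,k},
\end{equation*}
which is exactly the formula at rank $n+1$.

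Next I would rewrite this closed form as a geometric sum by factoring out the dominant power. In the case $r_2 < r_1$, I factor out $r_1^{n-1}$, obtaining $\xi_n = r_1^{n-1} \sum_{k=0}^{n-1} (r_2/r_1)^k$. Since $r_2/r_1 \in (0,1)$, the finite sum is bounded by the full convergent series $\sum_{k=0}^{\infty} (r_2/r_1)^k = (1 - r_2/r_1)^{-1}$, which yields the first claimed bound. In the case $r_1 < r_2$, I instead change the summation index via $j = n-1-k$ to factor out $r_2^{n-1}$, obtaining $\xi_n = r_2^{n-1} \sum_{j=0}^{n-1} (r_1/r_2)^j$, and the same geometric-series estimate, now with ratio $r_1/r_2 \in (0,1)$, gives the second bound.

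There is essentially no genuine obstacle here, as the lemma reduces to an elementary computation once the closed form is identified. The only point requiring a small amount of care is the symmetry between the two cases: one must factor out $r_1^{n-1}$ precisely when $r_1$ dominates and $r_2^{n-1}$ when $r_2$ dominates, so that in both instances the geometric ratio is strictly less than one and the partial sum can legitimately be majorized by the convergent infinite series. The positivity hypotheses $r_1,r_2 > 0$ ensure that every term in the sum is nonnegative, which is what makes the passage from the finite sum to the infinite series valid.
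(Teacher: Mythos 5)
Your proof is correct and follows essentially the same route as the paper: both derive the closed form $\xi_n = \sum_{k=0}^{n-1} r_1^{\,n-1-k} r_2^{\,k}$ by induction, factor out the dominant power ($r_1^{n-1}$ or $r_2^{n-1}$ according to the case), and bound the remaining geometric sum by the convergent infinite series. No gaps.
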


\begin{proof}
One can easily check by induction that
\begin{equation*}
\xi_n = r_1^{n-1} \sum_{i=0}^{n-1} \Big( \frac{r_2}{r_1} \Big)^i
= r_2^{n-1} \sum_{i=0}^{n-1} \Big( \frac{r_1}{r_2} \Big)^i,
\quad \forall n \in \mathbb{N}.
\end{equation*}
If $r_2 < r_1$, then $\sum_{i=0}^{n-1} \big( \frac{r_2}{r_1} \big)^i \leq \big( 1-\frac{r_2}{r_1} \big)^{-1}$, which proves the first estimate. If $r_1 < r_2$, then $\sum_{i=0}^{n-1} \big( \frac{r_1}{r_2} \big)^i \leq \big( 1-\frac{r_1}{r_2} \big)^{-1}$ and the second estimate follows.
\end{proof}


\section{Infinite-horizon problems} \label{section:infinite_horizon}

\subsection{Formulation of the problem and overtaking optimality}

In this subsection we investigate the case of linear-quadratic optimal control problems with an infinite horizon. The investigated problem can be seen as a limit problem of \eqref{prob:turnpike} when $\bar{T}$ goes to $\infty$.
For this purpose, we introduce the space $L_{\text{loc}}^2(0,\infty)$ of locally square integrable functions and the space $W_{\text{loc}}(0,\infty)$ of functions $y \colon (0,\infty) \rightarrow V$ such that for all $T > 0$, $y_{|(0,T)} \in W(0,T)$.
Consider the problem
\begin{equation*} \label{prob:infinity} \tag{$P(\infty)$}
\begin{cases}
\begin{array}{l}
{\displaystyle
\inf_{\begin{subarray}{c} y \in W_{\text{loc}}(0,\infty) \\ u \in L_{\text{loc}}^2(0,\infty;U) \end{subarray}} \
 \int_0^{\infty} \ell(y(t),u(t)) \dd t } \\[0.5em]
\text{subject to: } \dot{y}(t)= Ay(t) + Bu(t) + f^\diamond, \quad y(0)= y_0.
\end{array}
\end{cases}
\end{equation*}
In general, the above integral is not proper and one needs to use an appropriate notion of optimality. Let us mention that this difficulty would also arise if we chose $W(0,\infty)$ and $L^2(0,\infty;U)$ as function spaces. We call a pair $(y,u) \in W_{\text{loc}}(0,\infty) \times L_{\text{loc}}^2(0,\infty;U)$ \emph{feasible pair} if $\dot{y}= Ay + Bu + f$ and $y(0)= y_0$.

\begin{definition}
A feasible pair $(\bar{y},\bar{u}) \in W_{\text{\emph{loc}}}(0,\infty) \times L_{\text{\emph{loc}}}^2(0,\infty;U)$ is said to be \emph{overtaking} optimal for Problem \eqref{prob:infinity} if for all feasible pairs $(y,u) \in W_{\text{\emph{loc}}}(0,\infty) \times L_{\text{\emph{loc}}}^2(0,\infty;U)$,
$\liminf_{T \to \infty} \big( J_{T,0,0}(u,y) - J_{T,0,0}(\bar{u},\bar{y}) \big) \geq 0$.
\end{definition}

The notion of overtaking optimality is rather classical in the literature, see for example \cite{Z01}, where some existence results are established.
We construct now a pair $(\bar{y},\bar{u})$ which will be the unique overtaking optimal solution to problem \eqref{prob:infinity}.
Let $\tilde{y} \in W(0,\infty)$, $\tilde{p} \in W(0,\infty)$, and $\tilde{u} \in L^2(0,\infty;U)$ be defined by $\dot{\tilde{y}}= A_{\pi} \tilde{y}$, $\tilde{y}(0)= y_0 - y^\diamond$, $\tilde{p}= \Pi \tilde{y}$, $\tilde{u}= - \frac{1}{\alpha} B^*p$.
Using the same arguments as in Lemma \ref{lemma:linSysFiniteHorizon}, we can check that $\tilde{p} \in W(0,\infty)$ with $-\dot{\tilde{p}} = A^* \tilde{p} + C^* C \tilde{y}$. We finally set
\begin{equation*}
(\bar{y},\bar{u},\bar{p})(t)= (y^\diamond,u^\diamond,p^\diamond) + (\tilde{y},\tilde{u},\tilde{p})(t).
\end{equation*}
We have $(\bar{y},\bar{u},\bar{p}) \in W_{\text{loc}}(0,\infty) \times L_{\text{loc}}^2(0,\infty;U) \times W_{\text{loc}}(0,\infty)$. A key point in our analysis is that for all $T>0$, the triplet $(\bar{y},\bar{u},\bar{p})$ is the unique solution to the following optimality system:
\begin{equation} \label{eq:oc_inf}
\bar{y}(0)= y_0, \quad
\mathcal{H}(y,u,p)= (f^\diamond,g^\diamond,-h^\diamond), \quad
\bar{p}(T) - \Pi \bar{y}(T)= p^\diamond - \Pi y^\diamond.
\end{equation}
One can prove with standard arguments $(\bar{y},\bar{u},\bar{p})$ is the unique overtaking optimal solution. We refer the reader to \cite{TR98}, where a more general class of linear-quadratic problems is investigated.

\begin{proposition}
The pair $(\bar{y},\bar{u})$ is the unique overtaking optimal solution to \eqref{prob:infinity}. More precisely, we have
\begin{equation} \label{eq:overtaking_qg}
\liminf_{T \to \infty} \Big( J(u,y,T) - J(\bar{u},\bar{y},T)
- \Big( \frac{\alpha}{2} - \varepsilon \Big) \| u - \bar{u} \|_{L^2(0,T;U)}^2 \Big)
\geq 0,
\end{equation}
for all $\varepsilon > 0$ and for all feasible $(y,u)$.
\end{proposition}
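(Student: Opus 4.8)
The plan is to derive an exact expression for the finite-horizon cost gap and then control its boundary term as $T\to\infty$. Fix an arbitrary feasible pair $(y,u)$ and set $\hat{y}=y-\bar{y}$, $\hat{u}=u-\bar{u}$, so that $\dot{\hat{y}}=A\hat{y}+B\hat{u}$ with $\hat{y}(0)=0$. Expanding $\ell$ around $(\bar{y},\bar{u})$, the quadratic part contributes $\frac12\|C\hat{y}\|_Z^2+\frac{\alpha}{2}\|\hat{u}\|_U^2$, while the part that is linear in $(\hat{y},\hat{u})$ is $\langle C^*C\bar{y}+g^\diamond,\hat{y}\rangle+\langle\alpha\bar{u}+h^\diamond,\hat{u}\rangle_U$. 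Using the relations $C^*C\bar{y}+g^\diamond=-\dot{\bar{p}}-A^*\bar{p}$ and $\alpha\bar{u}+h^\diamond=-B^*\bar{p}$, which follow from $\mathcal{H}(\bar{y},\bar{u},\bar{p})=(f^\diamond,g^\diamond,-h^\diamond)$ in \eqref{eq:oc_inf}, substituting $B\hat{u}=\dot{\hat{y}}-A\hat{y}$, and integrating by parts in $W(0,T)$, the linear part collapses to $-\frac{d}{dt}\langle\bar{p},\hat{y}\rangle_Y$, whose integral equals $-\langle\bar{p}(T),\hat{y}(T)\rangle_Y$ since $\hat{y}(0)=0$. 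This yields the exact identity
\[
J_{T,0,0}(u,y)-J_{T,0,0}(\bar{u},\bar{y})=-\langle\bar{p}(T),\hat{y}(T)\rangle_Y+\tfrac12\|C\hat{y}\|_{L^2(0,T;Z)}^2+\tfrac{\alpha}{2}\|\hat{u}\|_{L^2(0,T;U)}^2.
\]

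Next I would record two a priori facts. First, $\bar{p}$ is bounded on $[0,\infty)$: indeed $\bar{p}=p^\diamond+\Pi\tilde{y}$ with $\tilde{y}(t)=e^{A_\pi t}(y_0-y^\diamond)\to 0$ exponentially, so $M:=\sup_{t\geq 0}\|\bar{p}(t)\|_Y<\infty$. Second, Lemma~\ref{lemma:detectability} applied to $\hat{y}$ (zero source, $\hat{y}(0)=0$) gives $\|\hat{y}(T)\|_Y\leq\|\hat{y}\|_{W(0,T)}\leq M\big(\|\hat{u}\|_{L^2(0,T;U)}+\|C\hat{y}\|_{L^2(0,T;Z)}\big)$ with $M$ independent of $T$. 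Hence the boundary term is at most linear in the nondecreasing quantities $a(T):=\|\hat{u}\|_{L^2(0,T;U)}$ and $b(T):=\|C\hat{y}\|_{L^2(0,T;Z)}$.

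To prove \eqref{eq:overtaking_qg} I would distinguish two cases. If $a(\infty)$ and $b(\infty)$ are both finite, then $\hat{y}\in W(0,\infty)$; since $\hat{y}\in L^2(0,\infty;V)$ while $t\mapsto\|\hat{y}(t)\|_Y^2$ converges (its derivative $2\langle\dot{\hat{y}},\hat{y}\rangle$ is integrable), the limit must be $0$, so $\langle\bar{p}(T),\hat{y}(T)\rangle_Y\to 0$ and the identity gives, for $0<\varepsilon<\alpha/2$, that $J_{T,0,0}(u,y)-J_{T,0,0}(\bar{u},\bar{y})-(\tfrac{\alpha}{2}-\varepsilon)a(T)^2\to\tfrac12 b(\infty)^2+\varepsilon a(\infty)^2\geq 0$. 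If instead $a(\infty)=\infty$ or $b(\infty)=\infty$, then the expression is bounded below by $\tfrac12 b(T)^2-Mb(T)+\varepsilon a(T)^2-Ma(T)$, and since each quadratic is bounded below and the dominant one tends to $+\infty$, the whole expression tends to $+\infty$. In either case the $\liminf$ is $\geq 0$, which proves \eqref{eq:overtaking_qg}; dropping the nonnegative term $(\tfrac{\alpha}{2}-\varepsilon)a(T)^2$ then shows $(\bar{y},\bar{u})$ is overtaking optimal.

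For uniqueness, suppose $(y,u)$ is also overtaking optimal, so $\limsup_{T\to\infty}\big(J_{T,0,0}(u,y)-J_{T,0,0}(\bar{u},\bar{y})\big)\leq 0$. The case analysis above forces $a(\infty),b(\infty)<\infty$, hence $\hat{y}(T)\to 0$, and the exact identity gives $\lim_{T\to\infty}\big(J_{T,0,0}(u,y)-J_{T,0,0}(\bar{u},\bar{y})\big)=\tfrac12 b(\infty)^2+\tfrac{\alpha}{2}a(\infty)^2$. This limit being $\leq 0$ together with $\alpha>0$ forces $\hat{u}=0$, and then $\dot{\hat{y}}=A\hat{y}$, $\hat{y}(0)=0$ give $\hat{y}=0$, so $(y,u)=(\bar{y},\bar{u})$. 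I expect the main obstacle to be the boundary term $\langle\bar{p}(T),\hat{y}(T)\rangle_Y$: controlling it needs both the uniform bound on $\bar{p}$ (from the exponential decay of $\tilde{y}$) and the $T$-independent detectability estimate, and the non-square-integrable case must be handled by the quadratic-dominates-linear argument rather than by passing to a limit.
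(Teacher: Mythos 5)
Your proposal is correct and follows essentially the same route as the paper's proof: the same exact identity obtained from the adjoint relations and integration by parts, the same two a priori facts (boundedness of $\bar{p}$ and the $T$-independent detectability estimate), the same dichotomy between the square-integrable and non-square-integrable cases, and a uniqueness argument resting on the same ingredients. The only differences are cosmetic: you prove the decay $\hat{y}(T) \to 0$ inline (the paper cites an external lemma for it), and you run uniqueness through the exact identity and the case analysis rather than through inequality \eqref{eq:overtaking_qg} with $\varepsilon = \alpha/4$ as the paper does.
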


\begin{proof}
Let us first prove that
\begin{equation} \label{eq:overtaking_1}
J(u,y,T) - J(\bar{u},\bar{y},T)
- \frac{\alpha}{2} \int_0^T \|u-\bar{u}\|_U^2 \dd t
= \int_0^T \frac{1}{2} \| C(y-\bar{y}) \|_Z^2 - \langle \bar{p}(T), y(T)- \bar{y}(T) \rangle_Y.
\end{equation}
The calculations are very similar to those of the proof of Lemma \ref{lemma:diff_V}. We have
\begin{align}
J(u,y,T) - J(\bar{u},\bar{y},T) = & \ \int_0^T \frac{1}{2} \| C(y-\bar{y}) \|_Z^2 + \frac{\alpha}{2} \|u- \bar{u}\|_U^2 \dd t \notag \\
& \qquad + \int_0^T \langle C^*C \bar{y} + g, y-\bar{y} \rangle_Y + \langle \alpha \bar{u} + h , u-\bar{u}\rangle_U \dd t.
\label{eq:overtaking_2}
\end{align}
Using $C^*C \bar{y} + g^\diamond= - \dot{\bar{p}} - A^* \bar{p}$, $\alpha \bar{u} + h^\diamond = -B^* \bar{p}$ and integrating by parts, we obtain that
\begin{equation} \label{eq:overtaking_3}
\int_0^T \langle C^*C \bar{y} + g^\diamond, y-\bar{y} \rangle_Y + \langle\alpha \bar{u} + h^\diamond , u-\bar{u}\rangle \dd t
= - \langle \bar{p}(T), y(T)- \bar{y}(T) \rangle_Y.
\end{equation}
Combining \eqref{eq:overtaking_2} and \eqref{eq:overtaking_3}, we obtain \eqref{eq:overtaking_1}.

Let $\hat{y}= y - \bar{y}$. We have $\dot{\hat{y}} = A  \hat{y} + B(u- \bar{u})$, $\hat{y}(0)= 0$.
Therefore, by Lemma \ref{lemma:detectability}, there exists a constant $M$ independent of $T$ such that
\begin{equation*}
\| y(T)- \bar{y} \|_Y = \| \hat{y}(T) \| \leq
M \big( \| u - \bar{u} \|_{L^2(0,T;U)} + \| C(y- \bar{y}) \|_{L^2(0,T;Z)} \big).
\end{equation*}
The adjoint $\bar{p}$ is bounded, since $\bar{p}= p^\diamond + \tilde{p}$, where $\tilde{p} \in W(0,\infty)$.
Therefore,
\begin{align*}
| \langle \bar{p}(T), y(T)- \bar{y}(T) \rangle_Y |
\leq M \big( \| u - \bar{u} \|_{L^2(0,T;U)} + \| C(y- \bar{y}) \|_{L^2(0,T;Z)} \big),
\end{align*}
where again $M$ does not depend on $T$. We deduce that
\begin{align*}
& J(u,y,T) - J(\bar{u},\bar{y},T)
- \Big( \frac{\alpha}{2} - \varepsilon \Big) \| u - \bar{u} \|_{L^2(0,T;U)}^2 \\
& \qquad \geq \big( \varepsilon \| u - \bar{u} \|_{L^2(0,T;U)}^2 - M \| u - \bar{u} \|_{L^2(0,T;U)} \big) \\
& \qquad \qquad + \Big( \frac{1}{2} \| C(y-\bar{y}) \|_{L^2(0,T;Z)}^2 - M \| C(y-\bar{y}) \|_{L^2(0,T;Z)} \Big).
\end{align*}
The two terms on the r.h.s.\@ in the above inequality are bounded from below. Thus, if one of them tends to infinity (which is the case if $\| u - \bar{u} \|_{L^2(0,T;U)} \underset{T \to \infty}{\longrightarrow} \infty$ or $\| C(y-\bar{y}) \|_{L^2(0,T;Z)} \underset{T \to \infty}{\longrightarrow} \infty$), then 
\begin{equation*}
 \Big( J(u,y,T) - J(\bar{u},\bar{y},T)
- \Big( \frac{\alpha}{2} - \varepsilon \Big) \| u - \bar{u} \|_{L^2(0,T;U)}^2 \Big)  \underset{T \to \infty}{\longrightarrow} \infty
\end{equation*}
and therefore, \eqref{eq:overtaking_qg} holds true.
Otherwise, if $\| u - \bar{u} \|_{L^2(0,T;U)}$ and $\| C(y-\bar{y}) \|_{L^2(0,T;Z)}$ are both bounded, then $y - \bar{y} \in W(0,\infty)$ (by Lemma \ref{lemma:detectability}) and therefore $y(T)-\bar{y}(T) \underset{T \to \infty}{\longrightarrow} 0$ (see \cite[Lemma 1]{BreKP17b}). It follows that
$\langle \bar{p}(T), y(T)- \bar{y}(T) \rangle_Y \underset{T \to \infty}{\longrightarrow} 0$.
We deduce then from \eqref{eq:overtaking_1} that
\begin{equation*}
\liminf_{T \to \infty} J(u,y,T) - J(\bar{u},\bar{y},T) - \frac{\alpha}{2} \| u - \bar{u} \|_{L^2(0,T;U)}^2 \geq 0,
\end{equation*}
which proves \eqref{eq:overtaking_qg} and that $(\bar{y},\bar{u})$ is overtaking optimal.

Let us prove uniqueness. Let $(y,u)$ be overtaking optimal. Then, by definition,
\begin{equation*}
0  \leq \liminf_{T \to \infty} \big( J(\bar{u},\bar{y},T) - J(u,y,T) \big)
\leq \limsup_{T \to \infty} \big( J(\bar{u},\bar{y},T) - J(u,y,T) \big).
\end{equation*}
Therefore, using \eqref{eq:overtaking_qg} with $\varepsilon= \frac{\alpha}{4}$,
\begin{align*}
0 \geq \ & - \limsup_{T \to \infty} \big( J(\bar{u},\bar{y},T) - J(u,y,T) \big) \\
= \ & \liminf_{T \to \infty} \big( J(u,y,T) - J(\bar{u},\bar{y},T) \big) \\
\geq \ & \underbrace{\liminf_{T \to \infty} \Big( J(u,y,T) - J(\bar{u},\bar{y},T) - \frac{\alpha}{4} \| u - \bar{u} \|_{L^2(0,T;U)}^2 \Big)}_{\geq 0} + \frac{\alpha}{4} \liminf_{T \to \infty} \| u - \bar{u} \|^2_{L^2(0,T;U)} \\
\geq \ & \frac{\alpha}{4} \liminf_{T \to \infty} \| u - \bar{u} \|_{L^2(0,T;U)}^2.
\end{align*}
We immediately deduce that $u= \bar{u}$. Thus $y= \bar{y}$, which concludes the proof of uniqueness.
\end{proof}

The next lemma deals with the asymptotic analysis of $J(\bar{u},\bar{y},T)$.

\begin{lemma} \label{lemma:asymptotic_cost}
For all $T>0$, the following equality holds true:
\begin{align}
J(\bar{u},\bar{y},T)
= \ & T v^\diamond + \frac{1}{2} \langle y_0 - y^\diamond, \Pi (y_0- y^\diamond) \rangle_Y 
- \langle p^\diamond, \bar{y}(T)-y^\diamond \rangle \notag \\
& \qquad - \frac{1}{2} \langle \bar{y}(T)-y^\diamond, \Pi (\bar{y}(T)-y^\diamond) \rangle_Y, \label{eq:formula_1}
\end{align}
where $v^\diamond$ is the value of problem \eqref{eq:turnpike_pb_setub}.
A direct consequence is the following relation:
\begin{equation} \label{eq:formula_2}
\lim_{T \to \infty} \frac{J(\bar{u},\bar{y},T)}{T} = v^\diamond.
\end{equation}
\end{lemma}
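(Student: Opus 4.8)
The plan is to compute $J(\bar u,\bar y,T)=\int_0^T\ell(\bar y,\bar u)\dd t$ directly, exploiting the explicit affine form $(\bar y,\bar u,\bar p)=(y^\diamond,u^\diamond,p^\diamond)+(\tilde y,\tilde u,\tilde p)$ of the overtaking solution. I would substitute $\bar y=y^\diamond+\tilde y$ and $\bar u=u^\diamond+\tilde u$ into $\ell$ and expand, organizing the integrand into three groups exactly as in the proof of Lemma~\ref{lemma:link_value_function}: a constant term $\ell(y^\diamond,u^\diamond)=v^\diamond$, which contributes $T v^\diamond$; the cross terms $\langle C^*Cy^\diamond+g^\diamond,\tilde y\rangle+\langle\alpha u^\diamond+h^\diamond,\tilde u\rangle$, which are linear in $(\tilde y,\tilde u)$; and the purely quadratic term $\tfrac12\|C\tilde y\|_Z^2+\tfrac\alpha2\|\tilde u\|_U^2$.

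For the cross terms I would invoke the static optimality system \eqref{eq:turnpike_oc}, which gives $C^*Cy^\diamond+g^\diamond=-A^*p^\diamond$ and $\alpha u^\diamond+h^\diamond=-B^*p^\diamond$. The cross integrand then collapses to $-\langle p^\diamond,A\tilde y+B\tilde u\rangle=-\langle p^\diamond,\dot{\tilde y}\rangle$, using the state equation $\dot{\tilde y}=A\tilde y+B\tilde u=A_\pi\tilde y$. Since $t\mapsto\tilde y(t)$ is continuous into $Y$ (because $\tilde y\in W(0,T)\hookrightarrow C([0,T],Y)$), integrating over $(0,T)$ produces the boundary contribution $-\langle p^\diamond,\tilde y(T)-\tilde y(0)\rangle$.

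The main work is the quadratic term, and the key observation is that $\tfrac12\|C\tilde y\|_Z^2+\tfrac\alpha2\|\tilde u\|_U^2$ is an exact time derivative along the optimal flow. Using $\tilde p=\Pi\tilde y$, the feedback relation $\tilde u=-\tfrac1\alpha B^*\tilde p$ (so that $B^*\tilde p=-\alpha\tilde u$), the state equation, and the adjoint equation $-\dot{\tilde p}=A^*\tilde p+C^*C\tilde y$ recorded just before the lemma, a direct computation of $\frac{\mathrm{d}}{\mathrm{d}t}\langle\tilde p,\tilde y\rangle_Y=\langle\dot{\tilde p},\tilde y\rangle+\langle\tilde p,\dot{\tilde y}\rangle$ makes the two terms involving $A$ cancel (by the definition of the adjoint) and yields $\frac{\mathrm{d}}{\mathrm{d}t}\langle\tilde p,\tilde y\rangle_Y=-\|C\tilde y\|_Z^2-\alpha\|\tilde u\|_U^2$. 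Hence the quadratic integrand equals $-\tfrac12\frac{\mathrm{d}}{\mathrm{d}t}\langle\tilde p,\tilde y\rangle_Y$, and its integral over $(0,T)$ is $\tfrac12\langle\tilde y(0),\Pi\tilde y(0)\rangle_Y-\tfrac12\langle\tilde y(T),\Pi\tilde y(T)\rangle_Y$. (One may reach the same identity from the algebraic Riccati equation \eqref{eq:are} evaluated at $\tilde y$, but the adjoint-equation route is cleaner since it needs no extra regularity on $\tilde y$.) Collecting the three contributions and inserting $\tilde y(0)=y_0-y^\diamond$, $\tilde y(T)=\bar y(T)-y^\diamond$ gives \eqref{eq:formula_1}.

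Finally, \eqref{eq:formula_2} follows by dividing \eqref{eq:formula_1} by $T$ and letting $T\to\infty$: every term other than $T v^\diamond$ is bounded uniformly in $T$, since $\tfrac12\langle y_0-y^\diamond,\Pi(y_0-y^\diamond)\rangle_Y$ is a fixed constant while $\tilde y(T)=e^{A_\pi T}(y_0-y^\diamond)$ obeys $\|\tilde y(T)\|_Y\le M e^{-\lambda T}\|y_0-y^\diamond\|_Y\to0$ by the exponential stability of $e^{A_\pi t}$ (Lemma~\ref{lemma_stab_a_pi} together with \eqref{eq:spect_absc}), so the two terms carrying $\bar y(T)-y^\diamond$ vanish. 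I expect the only genuinely delicate point to be the rigorous justification of the two integration-by-parts identities in the Gelfand-triple setting; this is standard, however, because $\tilde y,\tilde p\in W(0,T)$ while $A\tilde y,\,C^*C\tilde y,\,A^*\tilde p\in L^2(0,T;V^*)$ and $B^*\tilde p\in L^2(0,T;U)$, so the product rule for $\frac{\mathrm{d}}{\mathrm{d}t}\langle\tilde p,\tilde y\rangle_Y$ and the identity $\int_0^T\langle p^\diamond,\dot{\tilde y}\rangle\dd t=\langle p^\diamond,\tilde y(T)-\tilde y(0)\rangle$ both apply.
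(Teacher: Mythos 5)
Your route is genuinely different from the paper's. The paper never expands the integrand: it observes that $(\bar y,\bar u)_{|(0,T)}$ is, by \eqref{eq:oc_inf}, the solution of the finite-horizon problem with terminal data $Q=\Pi$ and $q=p^\diamond-\Pi y^\diamond$ (so that the corresponding $\tilde q$ vanishes), evaluates the value of that problem through the value-function identity \eqref{eq:link_V_V} together with $\mathcal{V}^0_{T,\Pi,0}(y)=\frac12\langle y,\Pi(T,\Pi)y\rangle_Y$ and $\Pi(T,\Pi)=\Pi$ (Lemma \ref{lemma:decay_g}), and then subtracts the terminal-cost terms. You instead expand $\ell(\bar y,\bar u)$ around $(y^\diamond,u^\diamond)$, kill the cross terms with the static optimality system \eqref{eq:turnpike_oc}, and integrate the quadratic part exactly via $\frac{d}{dt}\langle\tilde p,\tilde y\rangle_Y=-\|C\tilde y\|_Z^2-\alpha\|\tilde u\|_U^2$. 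That identity is correct (it uses $\tilde p=\Pi\tilde y$, $\alpha\tilde u=-B^*\tilde p$, and the adjoint equation recorded just before the lemma), and the integrations by parts are legitimate for $W(0,T)$ functions, as you note. Your argument is more elementary and self-contained; the paper's is shorter given the machinery of Sections \ref{section:add_res} and \ref{section:lq_pb}, which it reuses.

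There is, however, a discrepancy you passed over. Collecting your three contributions --- $Tv^\diamond$ from the constant term, $-\langle p^\diamond,\tilde y(T)-\tilde y(0)\rangle_Y$ from the cross terms, and $\frac12\langle\tilde y(0),\Pi\tilde y(0)\rangle_Y-\frac12\langle\tilde y(T),\Pi\tilde y(T)\rangle_Y$ from the quadratic terms --- and inserting $\tilde y(0)=y_0-y^\diamond$, $\tilde y(T)=\bar y(T)-y^\diamond$ does \emph{not} give \eqref{eq:formula_1}: it gives \eqref{eq:formula_1} plus the leftover constant $\langle p^\diamond,\,y_0-y^\diamond\rangle_Y$, which is absent from the stated formula. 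You should not assert the two coincide. In fact the mismatch is not an error on your side: combining the paper's own intermediate identities \eqref{eq:formula_4} and \eqref{eq:formula_5} also produces $\langle p^\diamond,y_0\rangle_Y-\langle p^\diamond,\bar y(T)\rangle_Y=\langle p^\diamond,y_0-y^\diamond\rangle_Y-\langle p^\diamond,\bar y(T)-y^\diamond\rangle_Y$, so \eqref{eq:formula_1} as printed is missing the $T$-independent term $\langle p^\diamond,y_0-y^\diamond\rangle_Y$ (a scalar example with $f^\diamond\neq0$, where $p^\diamond\neq0$, confirms this). The omission is harmless for the asymptotic statement \eqref{eq:formula_2}, since the extra term does not depend on $T$, but your write-up should state the corrected identity and flag the typo rather than claim agreement with \eqref{eq:formula_1} as written.
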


\begin{proof}
A direct consequence of \eqref{eq:oc_inf} is that $(\bar{y},\bar{u})_{|(0,T)}$ is the unique solution to $P(y_0,T,\Pi,q)$, where $q= p^\diamond - \Pi y^\diamond$.
The corresponding $\tilde{q}$ (defined by \eqref{eq:def_tilde_q}) is then
\begin{equation*}
\tilde{q} = q - p^\diamond + \Pi y^\diamond = 0.
\end{equation*}
By Corollary \ref{coro:sensi_for_real_pb}, we have
\begin{equation} \label{eq:formula_3}
\mathcal{V}(y_0,T,\Pi,q)= \mathcal{V}_0(y_0 - y^\diamond, T, \Pi, 0) + \langle p^\diamond, y_0 \rangle_Y + T v^\diamond + \frac{1}{2} \langle y^\diamond, \Pi y^\diamond \rangle_Y + \langle q - p^\diamond, y^\diamond \rangle_Y.
\end{equation}
As was explained in the proof of Corollary \ref{coro:sensi_v_0}, $\mathcal{V}_0(y,T,\Pi,0)= \frac{1}{2} \langle y, \Pi(T,\Pi) y \rangle_Y$. By Lemma \ref{lemma:decay_g}, $\Pi(T,\Pi)= \Pi$. Therefore, \eqref{eq:formula_3} becomes
\begin{equation} \label{eq:formula_4}
\mathcal{V}(y_0,T,\Pi,q)= \frac{1}{2} \langle y_0 -y^\diamond, \Pi(y_0 - y^\diamond) \rangle_Y + \langle p^\diamond, y_0 \rangle_Y + T v^\diamond + \frac{1}{2} \langle y^\diamond, \Pi y^\diamond \rangle_Y - \langle \Pi y^\diamond, y^\diamond \rangle_Y.
\end{equation}
We also have
\begin{align}
J(\bar{u},\bar{y},T)= \ & J(\bar{u},\bar{y},T,\Pi,q) - \frac{1}{2} \langle \bar{y}(T), \Pi \bar{y}(T) - \langle q, \bar{y}(T) \rangle_Y \notag \\
= \ & \mathcal{V}(y_0,T,\Pi,q) - \frac{1}{2} \langle \bar{y}(T), \Pi \bar{y}(T) - \langle q, \bar{y}(T) \rangle_Y. \label{eq:formula_5}
\end{align}
Formula \eqref{eq:formula_1} can be obtained by combining \eqref{eq:formula_4} and \eqref{eq:formula_5}.
Formula \eqref{eq:formula_2} follows from the fact that $\bar{y}-y^\diamond$ converges exponentially to 0.
\end{proof}

\subsection{Analysis of the RHC algorithm}

As before, one can find an approximation of $(\bar{y},\bar{u})$ by using the RHC algorithm.
We have $\bar{p}(T)= \Pi (\bar{y}(T)-y^\diamond) + p^\diamond$. Therefore, a good choice of a terminal cost function in the receding horizon algorithm is a function whose derivative (w.r.t.\@ $y$) is an approximation of $\Pi (y-y^\diamond) + p^\diamond$. We therefore consider
\begin{equation} \label{eq:def_phi_2}
\phi(t,y)= {\textstyle \frac{1}{2}} \langle y- y^\diamond, \hat{\Pi} (y-y^\diamond \rangle_Y + \langle \hat{p}, y \rangle_Y,
\end{equation}
where $\hat{p} \in Y$ and $\hat{\Pi} \in \mathcal{L}(Y)$ is self-adjoint and positive semi-definite.
If one chooses $\hat{\Pi}= \Pi$ and $\hat{p}= p^\diamond$, then the Receding-Horizon algorithm provides the exact overtaking optimal solution to the problem. Let us mention that the function $\phi$ that we propose for the infinite-horizon problem is independent of time.
The Receding-Horizon algorithm is now very similar to Algorithm \ref{algo:rh_turnpike}.

\begin{algorithm}[H]
\begin{algorithmic}
\STATE Input: $\tau \geq 0$, $T \geq \tau$, and $N \in \mathbb{N}$;
\STATE Set $n=0$ and $y_n= y_0$;
\FOR{$n=0,1,2,...,N-1$} \STATE{
Find the solution $(y,u)$ to \eqref{prob:inter} with $\theta= n\tau$, $y_\theta= y_n$, and $\phi$ given by \eqref{eq:def_phi_2};
\STATE Set $y_{RH}(t)= y(t)$ and $u_{RH}(t)= u(t)$ for a.e.\@ $t \in (n \tau, (n+1) \tau)$;
\STATE Set $y_{n+1}= y(\tau)$;
} \ENDFOR
\end{algorithmic}
\caption{Receding-Horizon method}
\label{algo:rh_turnpike_bis}
\end{algorithm}

\begin{theorem} \label{theo:main_bis}
There exist two constants $\tau_0>0$ and $M>0$ such that for all $(y_0,f^\diamond,g^\diamond,h^\diamond)$, for all $\tau_0 \leq \tau \leq T$, the following estimate holds true:
\begin{align}
& \max \big( \| y_{RH}- \bar{y} \|_{W(0,N \tau)}, \| u_{RH} - \bar{u} \|_{L^2(0,N \tau;U)} \big) \notag \\
& \qquad \leq M e^{-\lambda(T-\tau)} \big( e^{-\lambda T} \| \hat{\Pi}- \Pi \|_{\mathcal{L}(Y)} \| y_0 \|_Y + N \| \hat{p} - p^\diamond \|_Y \big). \label{eq:theo_main_1_bis}
\end{align}
\end{theorem}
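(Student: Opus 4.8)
The plan is to follow the proof of Theorem~\ref{theo:main} almost verbatim, exploiting three simplifications specific to the infinite-horizon setting. First, the overtaking optimal solution $(\bar y,\bar u,\bar p)$ satisfies the optimality system \eqref{eq:oc_inf} for \emph{every} $T$, equivalently $\bar p(t)=\Pi(\bar y(t)-y^\diamond)+p^\diamond$ for all $t$; hence on each window $(n\tau,n\tau+T)$ its terminal coupling reads $\bar p(n\tau+T)=\Pi(\bar y(n\tau+T)-y^\diamond)+p^\diamond$, which in the language of the finite-horizon proof corresponds to $\tilde q=0$ (no $G$-term). Second, the terminal cost \eqref{eq:def_phi_2} uses the \emph{constant} objects $\hat\Pi$ and $\hat p$, so $\|\hat\Pi-\Pi\|_{\mathcal L(Y)}$ and $\|\hat p-p^\diamond\|_Y$ play the roles of $\|\tilde\Pi-\Pi\|_\infty$ and $\|\tilde p-p^\diamond\|_Y$. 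Third, Algorithm~\ref{algo:rh_turnpike_bis} has no terminal cleanup step, so there is no $a_N$ term and it suffices to bound $\sum_{n=0}^{N-1}a_n$, where as before $a_n=\max(\|y_{RH}-\bar y\|_{W(n\tau,(n+1)\tau)},\|u_{RH}-\bar u\|_{L^2(n\tau,(n+1)\tau;U)})$ and $b_n=\|y_{RH}(n\tau)-\bar y(n\tau)\|_Y$.

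First I would, for each $n$, write the optimality systems on $(n\tau,n\tau+T)$ for the RHC solution (terminal condition $p(t_n')=\hat\Pi(y(t_n')-y^\diamond)+\hat p$, with $t_n'=n\tau+T$) and for $\bar y$ (terminal condition as above), and subtract. The difference $(\hat y,\hat u,\hat p)$ then solves \eqref{eq:optim_sys_zero} with parameters $(y_n-\bar y_n,T,\hat\Pi,w)$, where a short computation gives $w=(\hat\Pi-\Pi)(\bar y(t_n')-y^\diamond)+(\hat p-p^\diamond)$, the analogue of \eqref{eq:def_w} with the $\tilde G$--$\tilde q$ contribution absent. Since $\bar y-y^\diamond=e^{A_\pi\cdot}(y_0-y^\diamond)$ and $\|e^{A_\pi t}\|_{\mathcal L(Y)}\le M e^{-\lambda t}$ by \eqref{eq:spect_absc} and Lemma~\ref{lemma_stab_a_pi}, I obtain $\|w\|_Y\le M e^{-\lambda(n\tau+T)}\|\hat\Pi-\Pi\|_{\mathcal L(Y)}\|y_0-y^\diamond\|_Y+\|\hat p-p^\diamond\|_Y$.

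Next I would reproduce Step~1 of the proof of Theorem~\ref{theo:main}: splitting $(\hat y,\hat u,\hat p)$ into the response to $y_n-\bar y_n$ and the response to $w$, and applying Theorem~\ref{theo:non_reg_os} with $\mu=0,\lambda,-\lambda$ (for $\mathcal Q=\{\hat\Pi\}$, a bounded set of self-adjoint positive semi-definite operators) together with Lemma~\ref{lemma:embedding}, yields $a_n\le M(b_n+e^{-\lambda(T-\tau)}\|w\|_Y)$ and $b_{n+1}\le r b_n+M e^{-\lambda(T-\tau)}\|w\|_Y$, where $\tau_0$ is fixed so that $M_0 e^{-\lambda\tau_0}<r<1$ with $M_0$ the constant of Theorem~\ref{theo:non_reg_os} for $\mu=\lambda$. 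Inserting the bound on $\|w\|_Y$, the source in the $b_n$ recursion is $M e^{-\lambda(T-\tau)-\lambda T}\|\hat\Pi-\Pi\|_{\mathcal L(Y)}\|y_0-y^\diamond\|_Y\,e^{-n\lambda\tau}+M e^{-\lambda(T-\tau)}\|\hat p-p^\diamond\|_Y$. Only a \emph{decaying} geometric source ($e^{-n\lambda\tau}$, controlled by $r^{n-1}$ via the first case of Lemma~\ref{lemma:induction} since $e^{-\lambda\tau}\le e^{-\lambda\tau_0}<r$) and a \emph{constant} source (yielding a uniformly bounded sequence via the second case) appear; crucially, the growing sequence $d_n$ of the finite-horizon proof is absent because $\tilde q=0$. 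This gives $b_n\le M(e^{-\lambda(T-\tau)-\lambda T}\|\hat\Pi-\Pi\|_{\mathcal L(Y)}\|y_0-y^\diamond\|_Y\,r^{n-1}+e^{-\lambda(T-\tau)}\|\hat p-p^\diamond\|_Y)$, hence the same bound for $a_n$, and summing over $n=0,\dots,N-1$ (using $\sum r^{n-1}\le M$) produces the right-hand side of \eqref{eq:theo_main_1_bis}.

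I expect no serious obstacle, since the argument is a faithful specialization of Theorem~\ref{theo:main}; the only points requiring care are the exact identification of the terminal data of the overtaking solution on each window (which rests on \eqref{eq:oc_inf} holding for all $T$) and the verification that the $\tilde q$-driven term vanishes, which is precisely what removes the growing contribution and makes the per-iteration error accumulate only through the factor $N$ multiplying $\|\hat p-p^\diamond\|_Y$. Here $\|y_0-y^\diamond\|_Y$ appears in place of the $\|y_0\|_Y$ written in the statement.
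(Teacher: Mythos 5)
Your proof is correct, but it takes a different (and much longer) route than the paper's. The paper proves Theorem \ref{theo:main_bis} by a short reduction to Theorem \ref{theo:main}: fix any $\bar{T} > N\tau$; by \eqref{eq:oc_inf}, the overtaking solution restricted to $(0,\bar{T})$ is exactly the solution of the finite-horizon problem \eqref{prob:turnpike} with $Q=\Pi$ and $q = p^\diamond - \Pi y^\diamond$, for which $\tilde{q} = q - p^\diamond + Q y^\diamond = 0$; moreover, with $\tilde{\Pi}(\cdot)\equiv\hat{\Pi}$, $\tilde{G}\equiv 0$, $\tilde{p}=\hat{p}$, the terminal costs \eqref{eq:def_phi_1} and \eqref{eq:def_phi_2} coincide, so Algorithms \ref{algo:rh_turnpike} and \ref{algo:rh_turnpike_bis} generate the same iterates on $(0,N\tau)$, and Theorem \ref{theo:main} applies with $K_2=0$ and $K_1 = \|\hat{\Pi}-\Pi\|_{\mathcal{L}(Y)}\|y_0-y^\diamond\|_Y$ (using $\Pi(T,\Pi)=\Pi$ from Lemma \ref{lemma:decay_g}). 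You instead re-execute the proof of Theorem \ref{theo:main} inline on the infinite-horizon problem: your window-by-window identification of $w=(\hat{\Pi}-\Pi)(\bar{y}(t_n')-y^\diamond)+(\hat{p}-p^\diamond)$, the recursions for $a_n$ and $b_n$, and the application of Lemma \ref{lemma:induction} are precisely the specialization of Steps 1 and 2 of that proof, with the growing source $d_n$ absent and no cleanup term $a_N$. Both arguments hinge on the same two facts --- the pointwise identity $\bar{p}=\Pi(\bar{y}-y^\diamond)+p^\diamond$ (equivalently, effective $\tilde{q}=0$) and $\Pi(T,\Pi)=\Pi$ --- and yield the same estimate; the paper's reduction is more economical and reuses Theorem \ref{theo:main} as a black box, while your re-derivation is self-contained and makes transparent why the exponentially growing contribution disappears and why the error accumulates only linearly in $N$ through $\|\hat{p}-p^\diamond\|_Y$. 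Finally, your closing remark is accurate: both your argument and the paper's produce $\|y_0-y^\diamond\|_Y$ where the statement writes $\|y_0\|_Y$ (the paper's own $K_1$ contains $\|y_0-y^\diamond\|_Y$), so this is a slip in the statement of the theorem, not a gap in your proof.
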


\begin{remark}
Similar conclusions to the ones for the finite-horizon case can be drawn from the error estimate \eqref{eq:theo_main_1_bis}: reducing $\tau$ and increasing $T$ should improve the quality of the solution obtained with the Receding-Horizon algorithm (still, the case of arbitrarily small values of $\tau$ is not covered). Also, one should choose $\hat{p}= p^\diamond$ since in this case the error estimate becomes independent of $N$.
\end{remark}

\begin{proof}[Proof of Theorem \ref{theo:main_bis}]
Let us fix $\bar{T} > N \tau$.
As a direct consequence of \eqref{eq:oc_inf}, $(\bar{y},\bar{u})_{|(0,\bar{T})}$ is the unique solution to
\eqref{prob:turnpike} with initial condition $y_0$, horizon $\bar{T}$, $Q= \Pi$, and $q= p^\diamond - \Pi y^\diamond$.
The corresponding $\tilde{q}$ is null.
Consider now the pair $(\tilde{y}_{RH},\tilde{u}_{RH})$ obtained when solving this problem with the same values of the parameters $\tau$, $T$, and $N$ and with $\tilde{\Pi}(T)= \hat{\Pi}$ and $\tilde{G}(T)= 0$. By construction, $(y_{RH},u_{RH})$ and $(\tilde{y}_{RH},\tilde{u}_{RH})$ coincide on $(0,N \tau)$. Estimate \eqref{eq:theo_main_1_bis} is directly obtained by applying Theorem \ref{theo:main}. Indeed, the constant $K_2$ involved in \eqref{eq:theo_main_1} is null, since $\tilde{q}= 0$ and since $\sup_{T \in [0,\infty)} \| \tilde{\Pi}(T)- \Pi(T,\Pi) \|_{\mathcal{L}(Y)} =
\| \hat{\Pi} - \Pi \|_{\mathcal{L}(Y)}$,
by Lemma \ref{lemma:decay_g}.
\end{proof}

\section{Numerical verification} \label{section:numerics}

In this section we aim at measuring the tightness of our estimate. Our focus is the dependence of $\| u_{RH}- \bar{u} \|_{L^2(0,\bar{T};U)}$ with respect to $\tau$ and $T$. We consider for this purpose an optimal control problem with state variable of dimension 2 and scalar control, described by the following data:
\begin{align*}
& A= \begin{pmatrix} 0.2 & 0 \\ 0.1 & -0.5 \end{pmatrix}, \quad
B= \begin{pmatrix} 1 \\ 1 \end{pmatrix}, \quad
C= \begin{pmatrix} 1 & 0 \\ 0 & 1 \end{pmatrix}, \quad
\alpha= 0.25, \\
& y_0= \begin{pmatrix} 0 \\ 0 \end{pmatrix}, \quad
Q= \begin{pmatrix} 0 & 0 \\ 0 & 0 \end{pmatrix}, \quad
q= \begin{pmatrix} 0 \\ 0 \end{pmatrix}, \quad
\bar{T}= 30.
\end{align*}
Observe that the matrix $A$ is not stable.
The optimal control and the associated trajectory are represented on the graphs of Figure \ref{figure:graph}. The dashed lines correspond to the values of $u^\diamond$ and $y^\diamond$, respectively.

\begin{figure}[htb]
\centering
\includegraphics[trim= 5cm 9cm 4.5cm 9cm, clip, scale= 0.5]{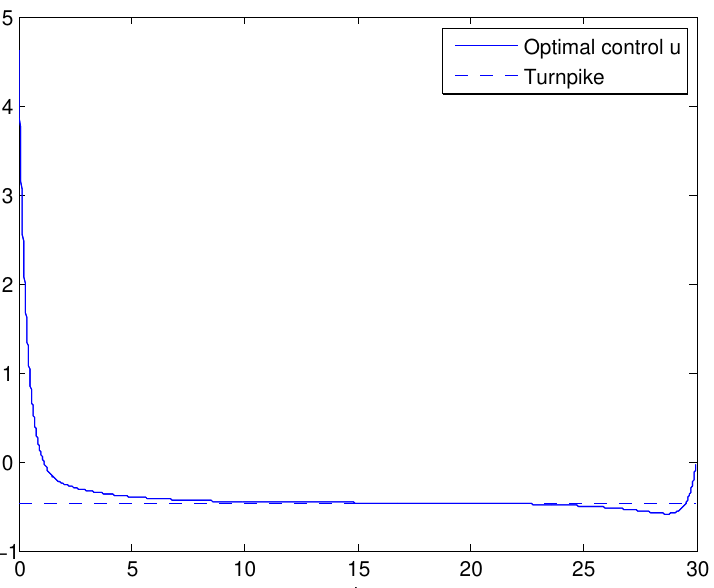}
\includegraphics[trim= 4.5cm 9cm 4.5cm 9cm, clip, scale= 0.5]{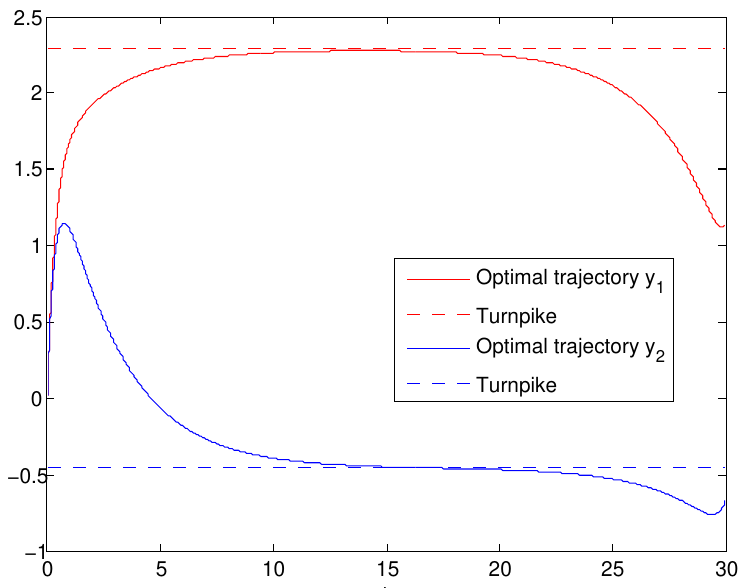}
\caption{Optimal control and optimal trajectory}
\label{figure:graph}
\end{figure}

We have generated different controls with the RHC algorithm, for values of $\tau$ and $T$ ranging from $0.5$ to $7.5$ and with the following parameters:
\begin{equation*}
\tilde{\Pi}= 0, \quad
\tilde{G}= 0, \quad
\tilde{p} = p^\diamond, \quad
N= \lfloor (\bar{T}-2T)/\tau \rfloor.
\end{equation*}
All optimal control problems have been solved with the limited-memory BFGS method, with a tolerance of $10^{-10}$ for the $L^2$-norm of the gradient of the reduced cost function. For the discretization of the state equation, we have used the implicit Euler scheme with time-step equal to $5\times 10^{-3}$.
As a consequence of Theorem \ref{theo:main}, there exist $\tau_0>0$ and $M>0$, both independent of $\tau$ and $T$, such that $\| u_{RH} - \bar{u} \|_{L^2(0,\bar{T})} \leq M e^{-2 \lambda T + \lambda \tau}$, for $\tau_0 \leq \tau \leq T \leq \bar{T}$.
Thus the quantity
\begin{equation*}
\rho(\tau,T):= \ln( \| u_{RH} - \bar{u} \|_{L^2(0,\bar{T})} ) + 2 \lambda T - \lambda \tau
\end{equation*}
is bounded from above, for sufficiently large values of $\tau$.
The results obtained for $\| u_{RH}-\bar{u} \|_{L^2(0,\bar{T})}$ and $100 \rho(\tau,T)$ are shown in Figures \ref{figure:dist} and \ref{figure:rho}, where $\lambda= 0.36$ is the opposite of the spectral absicissa of $A_\pi$. A first observation is that $\| u_{RH}-\bar{u} \|_{L^2(0,\bar{T})}$ is decreasing with respect to $T$ and increasing with respect to $\tau$. Moreover, the number $\rho(\tau,T)$ takes values between $0.40$ and $0.73$. The variation of $\rho(\tau,T)$ can be regarded as small, in comparison with the variation of $2\lambda T-\lambda \tau$ (approximately equal to 5, comparing $T= 0.5$ and $T= 7.5$). We can thus consider that $\rho$ is constant and conclude that our error estimate gives an accurate description of the dependence of $\| u_{RH}- \bar{u} \|_{L^2(0,\bar{T};U)}$ with respect to $\tau$ and $T$.

\begin{figure}[htb]
{\scriptsize
\begin{equation*}
\begin{array}{|c||c|c|c|c|c|c|c|}
\hline
& \multicolumn{7}{c|}{T} \\ \hline
\tau & 0.5 & 1 & 1.5 & 2 & 2.5 & 3 & 3.5\\ \hline
0.5 & 1.7 \,\mathrm{e}{+0} & 1.0 \,\mathrm{e}{+0} & 7.4 \,\mathrm{e}{-1} & 5.3 \,\mathrm{e}{-1} & 3.8 \,\mathrm{e}{-1} & 1.6 \,\mathrm{e}{-2} & 1.1 \,\mathrm{e}{-2} \\
1 & & 1.3 \,\mathrm{e}{+0} & 8.3 \,\mathrm{e}{-1} & 5.9 \,\mathrm{e}{-1} & 4.2 \,\mathrm{e}{-1} & 1.8 \,\mathrm{e}{-2} & 1.2 \,\mathrm{e}{-1} \\
1.5 & & & 1.0 \,\mathrm{e}{+0} & 6.4 \,\mathrm{e}{-1} & 4.4 \,\mathrm{e}{-1} & 1.8 \,\mathrm{e}{-2} & 1.4 \,\mathrm{e}{-2} \\
2 & & & & 8.1 \,\mathrm{e}{-1} & 5.2 \,\mathrm{e}{-1} & 2.2 \,\mathrm{e}{-2} & 1.5 \,\mathrm{e}{-2} \\
2.5 & & & & & 6.6 \,\mathrm{e}{-1} & 2.5 \,\mathrm{e}{-2} & 1.8 \,\mathrm{e}{-2} \\
3 & & & & & & 2.9 \,\mathrm{e}{-2} & 2.1 \,\mathrm{e}{-2} \\
3.5 & & & & & & & 2.4 \,\mathrm{e}{-1} \\ \hline
\end{array}
\end{equation*}
\begin{equation*}
\begin{array}{|c||c|c|c|c|c|c|c|c|}
\hline
& \multicolumn{8}{c|}{T} \\ \hline
\tau & 4 & 4.5 & 5 & 5.5 & 6 & 6.5 & 7 & 7.5 \\ \hline
0.5 & 1.3 \,\mathrm{e}{-1} & 9.3 \,\mathrm{e}{-2} & 6.5 \,\mathrm{e}{-2} & 4.6 \,\mathrm{e}{-2} & 3.2 \,\mathrm{e}{-2} & 2.2 \,\mathrm{e}{-2} & 1.6 \,\mathrm{e}{-2} & 1.1 \,\mathrm{e}{-2} \\
1 & 1.5 \,\mathrm{e}{-1} & 1.0 \,\mathrm{e}{-1} & 7.2 \,\mathrm{e}{-2} & 5.1 \,\mathrm{e}{-2} & 3.5 \,\mathrm{e}{-2} & 2.5 \,\mathrm{e}{-2} & 1.8 \,\mathrm{e}{-2} & 1.2 \,\mathrm{e}{-1} \\
1.5 & 1.5 \,\mathrm{e}{-1} & 1.2 \,\mathrm{e}{-1} & 7.8 \,\mathrm{e}{-2} & 5.3 \,\mathrm{e}{-2} & 4.0 \,\mathrm{e}{-2} & 2.7 \,\mathrm{e}{-2} & 1.8 \,\mathrm{e}{-2} & 1.4 \,\mathrm{e}{-2} \\
2 & 1.9 \,\mathrm{e}{-1} & 1.3 \,\mathrm{e}{-1} & 9.3 \,\mathrm{e}{-2} & 6.2 \,\mathrm{e}{-2} & 4.6 \,\mathrm{e}{-2} & 3.0 \,\mathrm{e}{-2} & 2.2 \,\mathrm{e}{-2} & 1.5 \,\mathrm{e}{-2} \\
2.5 & 2.1 \,\mathrm{e}{-1} & 1.5 \,\mathrm{e}{-1} & 1.1 \,\mathrm{e}{-1} & 7.2 \,\mathrm{e}{-2} & 5.2 \,\mathrm{e}{-2} & 3.5 \,\mathrm{e}{-2} & 2.5 \,\mathrm{e}{-2} & 1.8 \,\mathrm{e}{-2} \\
3 & 2.5 \,\mathrm{e}{-1} & 1.8 \,\mathrm{e}{-1} & 1.2 \,\mathrm{e}{-1} & 8.6 \,\mathrm{e}{-2} & 6.2 \,\mathrm{e}{-2} & 4.1 \,\mathrm{e}{-2} & 2.9 \,\mathrm{e}{-2} & 2.1 \,\mathrm{e}{-2} \\
3.5 & 3.0 \,\mathrm{e}{-1} & 2.1 \,\mathrm{e}{-1} & 1.4 \,\mathrm{e}{-1} & 1.0 \,\mathrm{e}{-1} & 7.2 \,\mathrm{e}{-2} & 4.9 \,\mathrm{e}{-2} & 3.4 \,\mathrm{e}{-2} & 2.4 \,\mathrm{e}{-1} \\
4 & 3.7 \,\mathrm{e}{-1} & 2.5 \,\mathrm{e}{-1} & 1.8 \,\mathrm{e}{-1} & 1.2 \,\mathrm{e}{-1} & 8.4 \,\mathrm{e}{-2} & 5.9 \,\mathrm{e}{-2} & 4.2 \,\mathrm{e}{-2} & 2.8 \,\mathrm{e}{-2} \\
4.5 & & 3.1 \,\mathrm{e}{-1} & 2.1 \,\mathrm{e}{-1} & 1.4 \,\mathrm{e}{-1} & 1.0 \,\mathrm{e}{-1} & 7.0 \,\mathrm{e}{-2} & 4.9 \,\mathrm{e}{-2} & 3.4 \,\mathrm{e}{-1} \\
5 & & & 2.6 \,\mathrm{e}{-1} & 1.7 \,\mathrm{e}{-1} & 1.2 \,\mathrm{e}{-1} & 8.4 \,\mathrm{e}{-2} & 5.9 \,\mathrm{e}{-2} & 4.1 \,\mathrm{e}{-2} \\
5.5 & & & & 2.2 \,\mathrm{e}{-1} & 1.5 \,\mathrm{e}{-1} & 1.0 \,\mathrm{e}{-1} & 7.0 \,\mathrm{e}{-2} & 4.9 \,\mathrm{e}{-1} \\
6 & & & & & 1.8 \,\mathrm{e}{-1} & 1.2 \,\mathrm{e}{-1} & 8.4 \,\mathrm{e}{-2} & 5.8 \,\mathrm{e}{-2} \\
6.5 & & & & & & 1.5 \,\mathrm{e}{-1} & 1.0 \,\mathrm{e}{-1} & 7.0 \,\mathrm{e}{-2} \\
7 & & & & & & & 1.3 \,\mathrm{e}{-1} & 8.5 \,\mathrm{e}{-2} \\
7.5 & & & & & & & & 1.0 \,\mathrm{e}{-1} \\ \hline
\end{array}
\end{equation*}
}
\caption{$\| u_{RH}- \bar{u} \|_{L^2(0,\bar{T})}$ for different values of $\tau$ and $T$.}
\label{figure:dist}
\end{figure}

\begin{figure}[htb]
{\scriptsize
\begin{equation*}
\begin{array}{|c||c|c|c|c|c|c|c|c|c|c|c|c|c|c|c|}
\hline
& \multicolumn{15}{c|}{T} \\ \hline
\tau & 0.5 & 1 & 1.5 & 2 & 2.5 & 3 & 3.5 & 4 & 4.5 & 5 & 5.5 & 6 & 6.5 & 7 & 7.5 \\ \hline
0.5 & 73 & 58 & 61 & 63 & 65 & 66 & 67 & 68 & 69 & 69 & 69 & 70 & 70 & 70 & 70 \\
1 & & 59 & 54 & 56 & 57 & 59 & 60 & 61 & 61 & 62 & 62 & 62 & 63 & 63 & 63 \\
1.5 & & & 54 & 46 & 43 & 53 & 49 & 47 & 55 & 51 & 48 & 57 & 52 & 49 & 57 \\
2 & & & & 51 & 42 & 48 & 43 & 50 & 45 & 51 & 46 & 52 & 46 & 52 & 47 \\
2.5 & & & & & 49 & 40 & 44 & 40 & 43 & 48 & 43 & 46 & 42 & 45 & 49 \\
3 & & & & & & 48 & 40 & 41 & 45 & 40 & 42 & 46 & 42 & 43 & 47 \\
3.5 & & & & & & & 44 & 42 & 43 & 40 & 41 & 43 & 40 & 41 & 42 \\
4 & & & & & & & & 45 & 42 & 42 & 40 & 40 & 41 & 43 & 40 \\
4.5 & & & & & & & & & 45 & 41 & 41 & 42 & 40 & 40 & 41 \\
5 & & & & & & & & & & 47 & 41 & 40 & 40 & 40 & 41 \\
5.5 & & & & & & & & & & & 46 & 41 & 41 & 40 & 40 \\
6 & & & & & & & & & & & & 46 & 41 & 40 & 40 \\
6.5 & & & & && & & & & & & & 46 & 41 & 40 \\
7 & & & & & & & & & & & & & & 46 & 41 \\
7.5 & & & & & & & & & & & & & & & 46 \\ \hline
\end{array}
\end{equation*}
}
\caption{$100(\ln(\| u_{RH}- \bar{u} \|_{L^2(0,\bar{T})}) + 2 \lambda T - \lambda \tau)$, for different values of $\tau$ and $T$.}
\label{figure:rho}
\end{figure}

\section*{Conclusion}

New error bounds for linear optimality systems associated with optimal control problems have been obtained in weighted spaces. They have enabled us to improve the exponential turnpike property for linear-quadratic problems and to obtain a precise error estimate for the control generated by the RHC algorithm.

Future research will be dedicated to the extension of our results to non-linear systems. Let us mention that an error estimate for the RHC method has been obtained for stabilization problems of bilinear systems in \cite{KunP18}, by application of the inverse mapping theorem in weighted spaces. Another axis of research will focus on the extension of our results to the wave equation.

\section*{Acknowledgements}

This project has received funding from the European Research Council (ERC) under the European Union’s Horizon 2020 research and innovation programme (grant agreement No 668998).


\end{document}